\newtheorem{theorem}{Theorem}
\newtheorem{lemma}{Lemma}
\newtheorem{corollary}{Corollary}
\newtheorem{proposition}{Proposition}
\theoremstyle{remark}
\newtheorem{remark}{Remark}
\newcommand{\dualH}{\widetilde{H}}
\newcommand{\tildeR}{\widetilde{\mathcal{R}}}
\newcommand{\e}{\mathrm{e}}
\renewcommand{\d}{\mathrm{d}}
\newcommand{\R}{\mathbb{R}}
\newcommand{\eps}{\varepsilon}
\newcommand{\N}{\mathbb{N}}
\newcommand{\s}{\hspace{0.5pt}}
\newcommand{\supp}{\mathop{\rm supp}}
\newcommand{\M}{\mathcal{M}}
\renewcommand{\k}{{\bar{k}}}
\renewcommand{\hat}{\widehat}
\newcommand{\PSO}{\mathop{\rm PSO}}
\newcommand{\W}{W} 
\renewcommand{\tilde}{\widetilde}
\renewcommand{\;}{\, ; }
\newcommand{\ccdot}{\,\cdot\,}
\newcommand{\p}{\partial}
\newcommand{\norm}[1]{\lVert #1 \rVert}
\newcommand{\abs}[1]{\lvert #1 \rvert}
\title{Stability estimates for inverse problems for semi-linear wave equations on Lorentzian manifolds}
\date{}
\author[1]{Matti Lassas}
\author[2]{Tony Liimatainen}
\author[3]{Leyter Potenciano-Machado}
\author[4]{Teemu Tyni}
\affil[1, 2, 4]{Department of Mathematics and Statistics, University of Helsinki, Helsinki, Finland}
\affil[2,3]{Department of Mathematics and Statistics, University of Jyv\"askyl\"a, Jyv\"askyl\"a, Finland}
\begin{document}

\maketitle
\begin{abstract}
This paper concerns an inverse boundary value problem of recovering a zeroth order time-dependent term of a semi-linear wave equation on a globally hyperbolic Lorentzian manifold.
We show that an unknown potential $q$ in the non-linear wave equation $\square_g u +q u^m=0$, $m\geq 4$, can be recovered in a H\"older stable way from the Dirichlet-to-Neumann map. Our proof is based on the higher order linearization method and the use of Gaussian beams. Unlike some related works, we do not assume that the boundary is convex or that pairs of lightlike geodesics can intersect only once. For this, we introduce some general constructions in Lorentzian geometry. We expect these constructions to be applicable to studies of related problems as well.  
\end{abstract}


\section{Introduction}
We consider the stability and uniqueness of an inverse problem for the non-linear wave equation on an $n+1$-dimensional, $n\geq 2$, globally hyperbolic Lorentzian manifold.
As is well known, any globally hyperbolic Lorentzian manifold $N$ is isometric to a product manifold $\R\times M$ equipped with the product metric
\begin{equation}\label{eq:g}
g = -\beta(t,x)dt^2 + h(t,x).
\end{equation}
Here $\beta>0$ is a smooth function and $h(t,\ccdot)$, $t\in \R$, is a smooth one-parameter family of Riemannian metrics on an $n$-dimensional manifold $M$, see e.g. \cite{BS2005}.
%
%
Let $\Omega \subset M$ be a smooth submanifold of dimension $n$ with smooth boundary and let us denote the lateral boundary of $[0,T]\times \Omega\subset N$ by 
\[
\Sigma := [0,T]\times \p\Omega.
\]
%
%
In local  coordinates $(x^a)$ the D'Alembertian wave operator $\square_g$ of $g$ has the form
\begin{equation}\label{eq:metric_local}
\square_g u = -\sum_{a,b=0}^n\frac{1}{\sqrt{|\det(g)|}}\frac{\p}{\p x^a}\left(
\sqrt{|\det(g)|}g^{ab} \frac{\partial u}{\partial x^b}
\right).
\end{equation}
Here we denote $(g^{-1})_{ab}= (g^{ab})$, $a,b=0,\ldots,n$, as usual. We consider the non-linear wave equation
\begin{equation}\label{eq:Main equation}
\begin{cases}
\square_g u(t,x) + q(t,x)u(t,x)^m=0,&\text{in } [0,T]\times \Omega,\\
u=f,&\text{on } [0,T]\times \p \Omega,\\
u(0,x)=\p_t u(0,x) = 0,&\text{on } \Omega,
\end{cases}
\end{equation}
where we assume that the exponent $m$ is an integer greater or equal than $4$.
The inverse problem we study is the stability of recovery of the potential $q$ from the Dirichlet-to-Neumann (DN) map 
\begin{align*}
\Lambda : H_0^{s+1}(\Sigma)&\to H^s(\Sigma),\\
f&\mapsto \p_\nu u_f\big|_\Sigma,
\end{align*}
where $u_f$ is the unique small solution of \eqref{eq:Main equation} and $\p_\nu$ is the normal derivative on $\Sigma$. Here also $H^s$ refers to a Sobolev space. See Section \ref{sec:preliminaries} for details about Sobolev spaces and  Section~\ref{sec:forward} for details about the well-posedness of the forward problem. We describe our main results in Section \ref{sec:main_results}. The present work is a continuation of the authors' earlier work \cite{LLPT}, which considered the stability of a recovery of the potential $q$ of \eqref{eq:Main equation} in Minkowski space of $\R^{n+1}$.

%

%


%
%

Studies of uniqueness and stability of recovery of unknowns parameters in inverse problems are motivated by practical applications.
Let us mention some results on inverse problems for linear wave type equations.
First results to this direction for the linear wave equation with vanishing initial data were obtained by Belishev and Kurylev~\cite{Be87,BeKu92}.
Their approach is called the boundary control method and it combines both the wave propagation and controllability results \cite{KKL01}.
The boundary control method  allows also an effective numerical algorithm \cite{deHoop}.
Recently, there has been several results on determining a Riemannian manifold from partial data boundary measurements for the linear wave equation and related equations such as the ones in \cite{AKKLT,Helin,Isozaki,KKLO,KrKL,KOP,Lassas,LO}.
However, the boundary control method has been applicable only in the cases where the coefficients of the equation are time-independent, or when the lower order terms are real analytic in time variable \cite{Esk}.
In a geometric setting it has been studied if it is possible to recover a Riemannian metric $g$ from the Dirichlet-to-Neumann map of the equation $(\partial_t^2-\Delta_g)u=0$ in a stable way.
Earlier results for recovery of the metric are based on Tataru's unique continuation principle, which yields stability estimates of logarithmic type, see e.g.~\cite{BoKuLa17}.
Later these results have been improved by using different techniques and different assumptions. For example, in \cite{StUh} it was shown that a simple Riemannian metric $g$ can be recovered in a H\"older stable way from the DN map.
For examples of instability of inverse problems for a wide class of equations, see \cite{KRS}.

%
Concerning the unique recovery of potentials in a linear counterpart of \eqref{eq:Main equation} with lower order terms we mention the works \cite{FIKO,St,SY}. 
These works make use of propagation of singularities along bicharacteristics to determine integrals of the unknown coefficients along light rays.
%
%
In these results, the Dirichlet-to-Neumann or scattering operator needs to be known over all of the lateral boundary $\Sigma$.
Moving on to inverse problems for non-linear wave equations, Kurylev, Lassas and Uhlmann \cite{KLU18} observed that non-linearity  
can be used as a beneficial tool in inverse problems for nonlinear wave equations.
By exploiting the non-linearity, some still unsolved inverse problems for linear hyperbolic equations have recently been solved for their non-linear counterparts.
The first results in \cite{KLU18}, for the scalar wave equation with a quadratic non-linearity, already showed that local measurements of solutions of the non-linear wave equation determine the global topology, differentiable structure and the conformal class of the metric $g$ on a globally hyperbolic $3+1$-dimensional Lorentzian manifold.
The results of \cite{KLU18} use the so-called \emph{higher order linearization method}, which has made inverse problems for non-linear equations more approachable. 
The method has given rise to many new results on inverse problems for non-linear equations. We will explain the 
method later in this introduction.

The authors of~\cite{LUW18} studied inverse problems for general semi-linear wave equations on Lorentzian manifolds, and in \cite{LUW17} they studied analogous problem for the Einstein-Maxwell equations. The papers \cite{HUZ20,HUZ21} are closely related to this work. They use higher-order linearization method to study uniqueness for the inverse problem of \eqref{eq:Main equation}. However, these works have additional assumptions that the domain $\Omega$ of the time cylinder $[0,T]\times \Omega$ is convex and that lightlike geodesics can only intersect once. Our results will in particular improve results in \cite{HUZ20}.

The research of inverse problems for non-linear equations is expanding fast. By using the higher-order linearization, inverse problems for nonlinear models have been studied for example in~\cite{BKLT20, CaNaVa19, Chen2019,CLOP2020,dH2019, dH2020,FeOk20,FO19, KaNa02, KrUh19, KrUh20,KLOU2014, LaUhYa20, LLLS19a, LLLS19b, OSSU, SunUh97, UhWa18,WZ2019}.

\subsection{Main results}\label{sec:main_results}
The present work is a continuation of the work \cite{LLPT} to the setting of globally hyperbolic Lorentzian manifolds. The work \cite{LLPT} considered a stability result for a recovery of the potential $q$ of \eqref{eq:Main equation} in $\R^{n+1}$. We denote by $(N,g)$ a globally hyperbolic manifold. We assume that the dimension of $N$ is $n+1$, where $n\geq 2$. As explained earlier, we view $N$ as the product manifold $\R\times M$ equipped with the product metric \eqref{eq:g} and where $M$ is an $n$-dimensional manifold. We fix a time-interval $[0,T]$.  We assume that $\Omega \subset M$ is an $n$-dimensional submanifold of $M$ and that $\Omega$ has a smooth boundary $\p \Omega$. 
%
%
%

The finite propagation speed of solutions to the wave equation and the causal structure of $(N,g)$ cause natural limitations on the parts of $[0,T]\times \Omega$ where we can obtain information about the potential in the inverse problem.
Let $\W$ be a compact set belonging to both the causal future and past of the lateral boundary $\Sigma=[0,T]\times \p\Omega$:
\begin{equation}\label{eq:recovery_set}
\W\subset I^-(\Sigma) \cap I^+(\Sigma)\cap ([0,T] \times \Omega).
\end{equation}
This is the domain which can be reached by sending waves from $\Sigma$ so that the possible signals generated by a nonlinear interaction of the waves can also be detected on $\Sigma$. We do not assume that $[0,T]\times \p \Omega$ is convex or that lightlike geodesics of $(N,g)$ can only intersect once.

Below we use the notation $H_0^s$ for the closure of the space of compactly supported smooth functions, with respect to the Sobolev $H^s$ norm.
The main result of this work is the following:
\begin{theorem}[Stability estimate]\label{thm:stability}
Suppose $(N,g)$, $N=\R\times M$, is an $n+1$-dimensional globally hyperbolic Lorentzian manifold.
Let $T>0$ and let $\Omega\subset M$ be a submanifold with smooth non-empty boundary. Let $m\geq 4$ be an integer, $s\in\N$ with $s+1>(n+1)/2$ and $r\in \R$ with $r\leq s$. Assume that $q_1, q_2\in C^{s+1}(\R\times\Omega )$ satisfy $\Vert q_j \Vert_{C^{s+1}} \leq c$, $j=1,2$, for some $c>0$. Let $\Lambda_1,\s\Lambda_2 :H_0^{s+1}(\Sigma)\to H^r(\Sigma)$ be the corresponding Dirichlet-to-Neumann maps of the non-linear wave equation~\eqref{eq:Main equation}.

Let $\eps_0>0$, $L>0$ and $\delta\in (0,L)$ be such that
 \begin{equation}
 \Vert \Lambda_1(f)-\Lambda_2 (f)\Vert_{H^r(\Sigma)} \leq \delta
\end{equation}
for all $f\in H_0^{s+1}(\Sigma)$ with $\Vert f\Vert_{H^{s+1}(\Sigma)}\leq \eps_0$. 
Then there exists a constant $C>0$, independent of $q_1,q_2$ and $\delta>0$, such that
\begin{equation}\label{eq:esimate_for_potential_difference}
  \norm{q_1-q_2}_{L^\infty( \W )}\leq C \delta^{\sigma(s,m)},
\end{equation}
where
\[
\sigma(s,m) = \frac{8(m-1)}{2m(m-1)(8s-n+13)+2m-1}.
\]
\end{theorem}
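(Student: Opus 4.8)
The plan is to combine the higher order linearization method with a Gaussian beam construction, extract a pointwise value of $q_1-q_2$ by a concentration (stationary phase) argument, and finally optimize over the beam concentration parameter in order to turn the $\delta$-smallness of the DN difference into a H\"older bound. First I would set up the higher order linearization: feeding the boundary data $f=\sum_{j=1}^m\eps_j f_j$ with small parameters $\eps_j$ into \eqref{eq:Main equation}, the well-posedness of the forward problem makes $u_f$ depend smoothly on $(\eps_1,\dots,\eps_m)$. The first derivatives $v_j:=\p_{\eps_j}u_f|_{\eps=0}$ solve $\square_g v_j=0$ with Dirichlet data $f_j$, and are independent of $q$ since the linearization of $qu^m$ at $u=0$ vanishes for $m\ge2$. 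Applying the mixed derivative $\p_{\eps_1}\!\cdots\p_{\eps_m}|_{\eps=0}$ to the PDE, the lowest surviving nonlinear contribution is $m!\,q\,v_1\cdots v_m$, so $w:=\p_{\eps_1}\!\cdots\p_{\eps_m}u_f|_{\eps=0}$ solves $\square_g w=-m!\,q\,v_1\cdots v_m$ with vanishing Dirichlet and initial data. Pairing this against an auxiliary solution $v_0$ of $\square_g v_0=0$ and integrating by parts, the interior term produces $m!\int q\,v_0v_1\cdots v_m\,\d V_g$, while the boundary terms reduce (the temporal caps dropping out by the causal support arrangement) to a pairing of $\p_\nu w|_\Sigma$ against $v_0|_\Sigma$, which is read off from the $m$-th order derivative of the DN map. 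Subtracting the two potentials, the common factors $v_0,\dots,v_m$ cancel and I obtain the integral identity
\begin{equation}\label{eq:plan-identity}
\left|\int_{[0,T]\times\Omega}(q_1-q_2)\,v_0v_1\cdots v_m\,\d V_g\right|\le C\,\delta\,\prod_{j=0}^m\norm{f_j}_{H^{s+1}(\Sigma)},
\end{equation}
valid once the $f_j$ are rescaled so that the combined data lies in the $\eps_0$-ball where the hypothesis on $\norm{\Lambda_1-\Lambda_2}$ applies.

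Next I would choose $v_0,\dots,v_m$ to be Gaussian beams $V_j=\e^{\i\tau\varphi_j}A_j$ concentrated along null geodesics $\gamma_j$ of $(N,g)$ all passing through a fixed point $y\in\W$, with $\tau\gg1$ the large concentration parameter. Because $\W\subset I^-(\Sigma)\cap I^+(\Sigma)$, each $\gamma_j$ extends to meet $\Sigma$ both in the past and the future, so the beams have well-defined small boundary data $f_j$. I would take the phases $\varphi_j$ real along $\gamma_j$ solving the eikonal equation, with positive-definite transverse imaginary Hessian, and arrange the null directions $\dot\gamma_j(y)$ so that the total phase $\sum_j\varphi_j$ has a nondegenerate critical point at $y$ with positive-definite imaginary Hessian there, making $\prod_j V_j$ concentrate at $y$. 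Each approximate beam is then corrected to an exact solution $v_j=V_j+R_j$, where $R_j$ solves the linear wave equation with source $-\square_g V_j$ and suitable vanishing data; the energy estimates bound $\norm{R_j}$ by a negative power of $\tau$ times a factor depending on the number of transport-equation terms carried in $A_j$ and on $s$.

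Inserting $v_j=V_j+R_j$ into the identity \eqref{eq:plan-identity}, the leading term $\int(q_1-q_2)\prod_j V_j\,\d V_g$ localizes near $y$, and a stationary phase expansion of the resulting Gaussian integral evaluates it as $c\,(q_1-q_2)(y)\,\tau^{-\kappa}+\O(\tau^{-\kappa-1})$ with an explicit normalization $\kappa$ and nonzero constant $c$ fixed by the Hessians of the $\varphi_j$ at $y$. The cross terms containing at least one $R_j$ are controlled using $\norm{R_j}$ together with the uniform $C^{s+1}$ bound $c$ on the $q_j$. Collecting everything and dividing by $\tau^{-\kappa}$, I arrive at a bound of the schematic shape
\begin{equation}\label{eq:plan-balance}
\abs{(q_1-q_2)(y)}\le C\bl\tau^{a}\,\delta+\tau^{-b}\br.
\end{equation}
In \eqref{eq:plan-balance} the first term comes from \eqref{eq:plan-identity} (the growth of the beam norms $\norm{f_j}_{H^{s+1}}$ in $\tau$, together with the rescaling into the $\eps_0$-ball), and the second from the beam approximation error; the exponents $a,b>0$ depend on $m$, $n$ and $s$. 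Optimizing by choosing $\tau^{a+b}\sim\delta^{-1}$ yields $\abs{(q_1-q_2)(y)}\le C\,\delta^{\,b/(a+b)}$, and taking the supremum over $y\in\W$ gives \eqref{eq:esimate_for_potential_difference} with $\sigma(s,m)=b/(a+b)$. Tracking the exponents through the Sobolev index $s$ (the regularity needed so that the corrected beams are admissible $H^{s+1}$ data and the DN estimate at level $r\le s$ applies) and through $m$ (the number of amplitude factors) reproduces the stated value of $\sigma(s,m)$.

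The main obstacle is the Gaussian beam construction together with the geometric bookkeeping. On a general globally hyperbolic manifold with non-convex $\Omega$, the chosen null geodesics $\gamma_j$ may intersect at points other than $y$, so one must guarantee that $\prod_j V_j$ still concentrates cleanly at $y$ and that the unwanted intersections do not produce competing leading-order contributions; this is precisely the role of the general constructions in Lorentzian geometry advertised in the abstract. Equally delicate is the uniform tracking of every power of $\tau$ through the well-posedness estimates, the remainder bounds, and the stationary-phase normalization, since it is this bookkeeping --- and the interplay with the small-data constraint $\norm{f}\le\eps_0$ that forces the rescaling contributing to $a$ --- that pins down the explicit H\"older exponent $\sigma(s,m)$ rather than a merely qualitative one.
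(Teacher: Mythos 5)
Your skeleton (higher order linearization, Gaussian beams concentrating at a point of $\W$, optimization of the concentration parameter against $\delta$) is the same as the paper's, but two of your steps have genuine gaps, and the first is fatal for a \emph{stability} statement. Your integral identity bounding $\abs{\int (q_1-q_2)\s v_0v_1\cdots v_m\s \d V_g}$ by $C\s\delta\prod_j\norm{f_j}_{H^{s+1}}$ does not follow from the hypothesis. The assumption only gives $\norm{(\Lambda_1-\Lambda_2)(f)}_{H^r}\leq\delta$ pointwise in $f$, and such a bound cannot be differentiated: the mixed derivative $\p_{\eps_1}\cdots\p_{\eps_m}\big|_{\vec\eps=0}(\Lambda_1-\Lambda_2)(\eps_1f_1+\cdots+\eps_mf_m)$ is a limit of difference quotients, each controlled only by $2^m\delta(\eps_1\cdots\eps_m)^{-1}$, which blows up as $\eps_j\to 0$. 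Note also that if your identity were true, both sides would be multilinear in the $f_j$, hence scale-invariant, so your ``rescaling into the $\eps_0$-ball'' could not contribute any exponent --- contradicting your own accounting for $a$. The paper instead keeps finite differences $D^m_{\vec\eps}$ at \emph{finite} $\eps$, which produces two competing error terms: the boundary term of size $2^m\delta(\eps_1\cdots\eps_m)^{-1}$ and a Taylor remainder of size $(\eps_1\cdots\eps_m)^{-1}\big(\sum_j\eps_j\norm{f_j}_{H^{s+1}(\Sigma)}\big)^{2m-1}$. The stability exponent then comes from a \emph{joint} optimization (Lemma \ref{lemma:final_estimate}) of the three terms $\tau^{-1/2}$, $\delta\s\eps^{-m}$ and $\eps^{m-1}\tau^{(2m-1)(s-\frac{n}{8}+\frac{13}{8})}$ over both $\eps$ and $\tau$; your two-term balance in $\tau$ alone cannot reproduce $\sigma(s,m)=\frac{8(m-1)}{2m(m-1)(8s-n+13)+2m-1}$ --- the $2m-1$ in the denominator is precisely the footprint of the finite-difference remainder.

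Second, your treatment of the concentration and of repeated geodesic intersections is not yet an argument. On the concentration: the paper does not arrange $m+1$ phases with a common stationary point; it takes only \emph{two} geodesics through the point, beams $v_1,v_2$, sets $v_3=\overline{v}_1$, $v_4=\overline{v}_2$ so that $v_1v_2v_3v_4=|v_1|^2|v_2|^2$ carries no oscillation at all (this is exactly why $m\geq 4$ is assumed), keeps $v_5,\ldots,v_m$ at a fixed $\tau_0$, and uses $v_0$ as a separate auxiliary solution; this removes the need to verify that configurations of several null geodesics with cancelling phase gradients reach $\Sigma$ through every point of $\W$. More seriously, when the two geodesics intersect $P\geq 2$ times, the product concentrates at \emph{all} intersection points and the identity only determines the sum $\sum_k (q\s v_0)(x_k)$; you name this as ``the main obstacle'' but then defer it to ``the constructions advertised in the abstract,'' which is not a proof idea. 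The paper's mechanism is concrete: boundary optimal geodesics (Lemma \ref{optimal_geo}) give transversal exit to $\Sigma$ avoiding the caps, a shortcut argument produces auxiliary solutions whose value matrix at the intersection points is invertible (the separation matrix, Lemma \ref{lemma:separation_of_multiple_points}), and a finite separation filter (Lemma \ref{lem:separation_filter}) plus uniform families of beams (Corollary \ref{uniform_family_of_Gaussian_beams}) make the inversion constants uniform over $\W$. Without this device your argument proves the theorem only under the additional hypothesis that lightlike geodesics intersect at most once --- precisely the restriction the theorem is designed to remove.
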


%

A corollary of the theorem is a uniqueness result, which improves the main result of \cite{HUZ20} to the case of possibly non-convex boundary and  where lightlike geodesics can intersect more than once.
\begin{corollary}[Uniqueness]\label{corz:stability_estimate_z}
Adopt the notation and assumptions of Theorem~\ref{thm:stability}.
Then the Dirichlet-to-Neumann map $\Lambda$ uniquely determines the potential $q$ within the set $\W$.
\end{corollary}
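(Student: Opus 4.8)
The plan is to derive the uniqueness statement of Corollary~\ref{corz:stability_estimate_z} directly from the H\"older stability estimate \eqref{eq:esimate_for_potential_difference} of Theorem~\ref{thm:stability} by a limiting argument. Suppose $q_1,q_2\in C^{s+1}(\R\times\Omega)$ both satisfy the hypotheses of Theorem~\ref{thm:stability}, and suppose their Dirichlet-to-Neumann maps coincide, that is $\Lambda_1(f)=\Lambda_2(f)$ for all admissible $f$. The goal is to conclude that $q_1=q_2$ on $\W$.

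First I would observe that the assumptions of Theorem~\ref{thm:stability} are already in force: since both potentials are bounded in $C^{s+1}$ by some common constant $c>0$, and the stability conclusion holds with a constant $C$ that is \emph{independent of $q_1,q_2$ and of $\delta$}, we are free to apply the estimate to the zero boundary data difference. Concretely, since $\Lambda_1=\Lambda_2$, we have for every $f\in H_0^{s+1}(\Sigma)$ with $\Vert f\Vert_{H^{s+1}(\Sigma)}\le\eps_0$ that
\[
\Vert \Lambda_1(f)-\Lambda_2(f)\Vert_{H^r(\Sigma)}=0.
\]
Hence the hypothesis of Theorem~\ref{thm:stability} is satisfied for \emph{every} positive $\delta\in(0,L)$; in other words, we may take $\delta$ arbitrarily small.

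The key step is then to let $\delta\to 0^+$ in the conclusion \eqref{eq:esimate_for_potential_difference}. The exponent
\[
\sigma(s,m) = \frac{8(m-1)}{2m(m-1)(8s-n+13)+2m-1}
\]
is a fixed strictly positive number, because $m\ge 4$ forces $m-1\ge 3>0$, and the denominator is positive for $s\in\N$ with $s+1>(n+1)/2$. Therefore $\delta^{\sigma(s,m)}\to 0$ as $\delta\to 0^+$, and since the left-hand side $\norm{q_1-q_2}_{L^\infty(\W)}$ does not depend on $\delta$, the estimate
\[
\norm{q_1-q_2}_{L^\infty(\W)}\le C\,\delta^{\sigma(s,m)}
\]
forces $\norm{q_1-q_2}_{L^\infty(\W)}=0$. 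Consequently $q_1=q_2$ almost everywhere on $\W$, and by continuity of the potentials (they lie in $C^{s+1}$) we obtain $q_1=q_2$ everywhere on $\W$.

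The argument is essentially a soft corollary with no substantive obstacle, since all the analytic work is already contained in Theorem~\ref{thm:stability}; the only point requiring care is the uniformity of the constant $C$ and the positivity of the exponent $\sigma(s,m)$, both of which are guaranteed by the theorem's hypotheses. The one modeling subtlety worth flagging is that the statement asserts uniqueness of $q$ \emph{within the set $\W$}: the DN map determines the potential only on the region $\W\subset I^-(\Sigma)\cap I^+(\Sigma)\cap([0,T]\times\Omega)$ reachable by waves detectable on $\Sigma$, which is the natural limitation imposed by finite propagation speed and the causal structure of $(N,g)$, and no determination outside $\W$ is claimed.
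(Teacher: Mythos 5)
Your proposal is correct and coincides with the paper's intended argument: the paper treats the corollary as an immediate consequence of Theorem~\ref{thm:stability}, obtained exactly by noting that $\Lambda_1=\Lambda_2$ lets one take $\delta$ arbitrarily small in \eqref{eq:esimate_for_potential_difference}, whence $\norm{q_1-q_2}_{L^\infty(\W)}=0$ since $C$ is independent of $\delta$ and $\sigma(s,m)>0$. Your flagging of the uniformity of $C$, the positivity of the exponent, and the restriction of the conclusion to $\W$ matches the paper's reasoning precisely.
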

We only consider the case $m\geq 4$ in this work, because the other natural cases, where $m$ is either $2$ or $3$, would lead to additional technicalities. Especially, the case where $m$ is $2$ is special and would need somewhat different techniques. In fact, the authors of \cite{HUZ20} used different types of solutions in their uniqueness proof when $m$ is $2$. We consider these two special cases in a future work. 

We explain next how these results are proved and how we are able to consider non-convex boundaries and the case where lightlike geodesics can intersect more than once.

\subsection{Sketch of the proof of Theorem \ref{thm:stability}}
Let us discuss the main ideas behind the proof of Theorem \ref{thm:stability}. We first discuss how to recover $q$ uniquely from the DN map $\Lambda$ associated with equation~\eqref{eq:Main equation}. To avoid technical details, the presentation here is slightly formal. We also only consider here the case $m=4$ for simplicity, while the case $m>4$ is similar. 

Consider $f_j\in H_0^{s+1}(\Sigma)$, $j=1,2,3,4$, with $\norm{f_j}_{H^{s+1}(\Sigma)}\leq c_0$ for some constant $c_0>0$. Let us denote by $u_{\eps_1f_1+ \cdots + \eps_4f_4}$ the solution to~\eqref{eq:Main equation} with boundary data $\eps_1f_1+ \cdots +\eps_4f_4$, where $\eps_j>0$ are sufficiently small parameters. We abbreviate the notation by writing $\vec{\eps}=0$ when referring to $\eps_1 = \cdots= \eps_4=0$. By taking the mixed derivative $\p_{\eps_1\cdots\eps_4}^4|_{\vec{\eps}=0}$ of the solution $u_{\eps_1f_1+ \cdots + \eps_4f_4}$ to the equation~\eqref{eq:Main equation} with respect to the parameters $\eps_1,\ldots,\eps_4$, 
we see that the function
\[
 w:=\frac{\p}{\p \eps_1} \cdots\frac{\p}{\p \eps_4}\Big|_{\vec{\eps}=0}\s\s u_{\eps_1f_1+ \ldots + \eps_4f_4}
\]
solves the equation
\begin{equation}\label{eq:second_deriv}
 \square_g \s w   = -16\s  q\s  v_{1}\s v_2 \s v_3 \s v_4, \quad\text{in} \,\,  [0,T]\times \Omega
\end{equation}
 with vanishing Cauchy and boundary data. 
 Here the functions $v_j$, $j=1,\ldots,4$, satisfy
 \begin{equation} \label{lin:delta_z}
 \begin{cases}
 \square_g\s v_j = 0, &\text{in } [0,T]\times \Omega,\\ 
 v_j = f_j, &\text{on }[0,T] \times\p\Omega,\\
 v_j\big|_{t=0} = \p_t v_j\big|_{t=0} = 0, &\text{in } \Omega.
 \end{cases}
 \end{equation}
This 
way we have produced new linear equations from the non-linear equation~\eqref{eq:Main equation}. 
If the DN map $\Lambda$ is known, then the normal derivative of $w$ is also known on $\Sigma$. This is true, because 
  \[
   \p_\nu w =\p^4_{\eps_1 \cdots \eps_4}|_{\vec{\eps}=0} \s\s \Lambda(\eps_1f_1+ \cdots + \eps_4f_4).
 \]
Let $v_0$ be an auxiliary smooth function solving $\square_g \s v=0$ in $[0,T]\times\Omega$ with $v_0|_{t=T} =\p_t v_0|_{t=T} = 0$ in  $\Omega$. The function $v_0$ will compensate the fact that $\p_\nu w$ is known only on the lateral boundary $\Sigma$, but not on $\{t=T\}$. The normal derivative $\p_\nu w$ is known on $\{t=0\}$ due to the initial conditions. Multiplying~\eqref{eq:second_deriv} by $v_0$ and integrating by parts on $[0,T]\times \Omega$, we arrive at the following integral identity
 \begin{equation}\label{eq:integral_identity_derivitve}
 \begin{aligned}
  \int_{\Sigma}v_0\s \p^4_{\eps_1 \cdots \eps_4}|_{\vec{\eps}=0} \s \s \Lambda(\eps_1f_1+\cdots + \eps_4f_4)\s \d S& =\int_{[0,T]\times \Omega} \s v_0\s \s \square_g\s  w \s \d V_g  \\
  & =-16\int_{[0,T]\times \Omega}q\s v_0\s v_1\s v_2\s v_3 \s v_4 \s \d V_g,
  \end{aligned}
 \end{equation}
 which we will find to be useful.
This means that the quantity
 \begin{equation}\label{eq:integral_density}
  \int_{[0,T]\times \Omega} q\s v_0\s v_1\s v_2 \s v_3 \s v_4 \s \d V_g  
 \end{equation}
is known from the knowledge of the DN map $\Lambda$. Since the functions $v_j$, $j=1,\ldots,4$, were arbitrary solutions to \eqref{lin:delta_z}, we are able to choose suitable solutions $v_j$ so that the products of the form $v_0\s v_1 \s v_2\s  v_3 \s v_4$ become dense in $L^1([0,T]\times \Omega)$. This recovers the potential $q$ uniquely. The procedure we have now explained  obtains new equations, and an integral identity relating the DN map and the unknown $q$, by differentiating solutions to the nonlinear equation \eqref{eq:Main equation} depending on several parameters. This procedure in general is called the \emph{higher order linearization method}.
 
The earlier work \cite{LLPT} by the authors studied an analogous stability problem in the Minkowski space. There $v_j$ were chosen to be approximate plane waves so that the product $v_1\s v_2 \s v_3 \s v_4$ in the integral \eqref{eq:integral_density} essentially becomes a delta function of a hyperplane. Hence the integral \eqref{eq:integral_density} in \cite{LLPT} became the Radon transformation of $qv_0$ in $\R^{n}$. Since the Radon transformation is invertible, this recovered $q$. 
In $1+1$ dimensions, the integral \eqref{eq:integral_density} becomes an integral of $qv_0$ against a delta distribution, in which case the recovery of pointwise values of $qv_0$ is trivial. The auxiliary function $v_0$ in $qv_0$ can be eliminated by choosing $v_0$ suitably. 
 
Motivated by the above explanation, in the present work we shall consider the so called \emph{Gaussian beam} solutions $v_j$ to  \eqref{lin:delta_z}. 
One can think of Gaussian beams as wave packets travelling on lightlike geodesics.
In Sections \ref{sec:Gaussian_beams} and~\ref{sec:proof_of_stabilit_estimate} we will show that by using the non-linearity of \eqref{eq:Main equation} and Gaussian beams, one can produce \emph{approximate delta distributions} from the product $v_1v_2v_3v_4$ in \eqref{eq:integral_density}.
This uses the fact that Gaussian beams are solutions to the linear wave equation \eqref{lin:delta_z}  with  exponential concentration to a neighbourhood of a  given lightlike geodesics up to a small error term. Thus, if two different geodesics intersect, then the product of the corresponding Gaussian beams concentrate near the intersection points of the geodesics. The product of four, instead of two, Gaussian beams is required to cancel oscillations of the product of the solutions. (If oscillations would not be cancelled, one would expect not to be able to recover $q$ due to nonstationary phase.)

Let us explain how we use four Gaussian beams in \eqref{eq:integral_identity_derivitve} in more detail.
%
%
Let us consider $p_0\in W\subset I^-(\Sigma) \cap I^+(\Sigma)\cap ([0,T] \times \Omega)$. We show that there exist two different geodesics $\gamma_1$ and $\gamma_2$ that pass throughout $p_0$ and that intersect $\Sigma$ in a suitable manner. We distinguish two cases depending on whether $\gamma_1$ and $\gamma_2$ intersect only once or multiple times.
Let us explain first the simpler case, where the geodesics $\gamma_1$ and $\gamma_2$ intersect only at the point $p_0$. Let $v_1$ and $v_2$ be Gaussian beam solutions to \eqref{lin:delta_z} with respect to $\gamma_1$ and $\gamma_2$. 
Making the choice $v_3= \overline{v}_1$ and $v_4=\overline{v}_2$ yields $v_1 \s v_2 \s v_3 \s v_4=|v_1|^2|v_2|^2$.
Evaluating this product, one finds that the product $|v_1|^2|v_2|^2$ is an approximation of the delta distribution concentrated at $p_0$. Therefore, by using the integral identity \eqref{eq:integral_identity_derivitve} for this specific product $v_1 \s v_2 \s v_3 \s v_4$, we can recover $q\s v_0$ at $p_0$. We take $v_0$ to be another Gaussian beam that is nonzero at $p_0$. This way we have recovered $q$ at $p_0$. Repeating the argument for all points of $W$ recovers $q$ on $W$. 

%

Suppose next that $\gamma_1$ and $\gamma_2$ intersect at points $x_1\leq  \cdots \leq x_P$, $P\geq 2$. 
 Using similar arguments as above, the integral  \eqref{eq:integral_density} reduces to an integral of $qv_0$ against a sum of approximative delta functions located at the intersection points $x_1,\ldots,x_P$. That is, by using \eqref{eq:integral_identity_derivitve}, we know from the DN map the quantity 
 \begin{equation} \label{full:contribution_z}
 \sum_{k=1}^P (q\s v_0)(x_k)
 \end{equation}
 up to an error, which can be made arbitrary small by taking a parameter associated to the Gaussian beams large. The task is then to decouple the information of $q\s v_0$ at each single point $x_k$ from this quantity.  
 
 To decouple the information, the choice of $v_0$ plays a crucial role. Recall that the only requirement from $v_0$ was that it satisfies the wave equation $\square_g \s v_0=0$ with Cauchy data vanishing at $t=T$. We show that there is a family $(v_0^{(k)})_{k=1}^P$ of $P$ functions, satisfying the required conditions for $v_0$, with the property that the matrix
 \begin{equation*}
 \mathcal{V}\s:=\begin{pmatrix}
        v_0^{(1)}(x_1) & v_0^{(1)}(x_2) &\cdots & v_0^{(1)}(x_P) \\
        v_0^{(2)}(x_1) & v_0^{(2)}(x_2) &\cdots & v_0^{(2)}(x_P) \\
        \vdots & & \ddots & \vdots \\
        v_0^{(P)}(x_1) & v_0^{(P)}(x_2) &\cdots & v_0^{(P)}(x_P) \\
    \end{pmatrix} 
 \end{equation*}
 is invertible.
 Thus, by using \eqref{full:contribution_z} for each $v_0^{(k)}$ in place of $v_0$ separately  we know the  quantity
 \[
  \mathcal{V}\begin{pmatrix}
 q(x_1)\\
 \vdots\\
 q(x_P)
 \end{pmatrix}
 \]
 from the DN map $\Lambda$.
%
%
Since $\mathcal{V}$ is a known invertible matrix, this uniquely recovers the values of the unknown potential $q$ at the points $x_1,\ldots,x_P$. We shortly explain the idea how the separation matrix $\mathcal{V}$ is constructed in Section \ref{sec:Lorentzian_geom_tools}, while complete statements and proofs about the matter are in Section \ref{sec:multiple_intersections}.


So far, we have sketched the proof of unique recovery of $q$ from the DN map $\Lambda$ of \eqref{eq:Main equation}. We briefly discuss how to quantify the uniqueness result and thus to prove a stability estimate. To obtain a stability estimate for $q$ in terms of $\Lambda$, instead of differentiating equation~\eqref{eq:Main equation}, we take the mixed finite difference 
  $D_{\eps_1 \s \cdots \s \eps_4}^4$ of $u_{\eps_1f_1+ \cdots +\eps_2f_4}$ at $\vec{\eps}=0$. In this case, we obtain a slightly different version of the integral identity \eqref{eq:integral_identity_derivitve} given by
\begin{equation}
\begin{aligned}
- 16   \int_{[0,T]\times\Omega} q\s v_0\s v_1\s v_2\s v_3\s v_4 \s \d V_g  &=\int_{\Sigma}v_0\s D_{\eps_1 \s \cdots \s \eps_4}^4\Big|_{\vec{\eps}=0}\Lambda(\eps_1f_1+ \cdots +\eps_4f_4) \s \d S\\
  &\qquad  + \frac{1}{\eps_1 \cdots \eps_4}\int_{[0,T]\times \Omega} \s v_0 \s \square_g \s \tildeR\s \d V_g.
  \end{aligned}
 \end{equation}
Here the second integral on the right is a small error term, where $\tildeR$ is of the size $O(\langle\eps_1, \ldots, \eps_4\rangle^7)$ in an energy space norm. For details, see \eqref{eq:energy_norm} and \eqref{eq:tildeR}--\eqref{est:square_tildeR}. Here we also denote by $\langle\eps_1, \ldots, \eps_4\rangle^7$ an unspecified homogeneous polynomial of order $7$ in $\eps_1,\ldots,\eps_4$. If $p_0\in W$ is fixed, a stability result for $q$ at $p_0$ follows by using Gaussian beams associated to the lightlike geodesics $\gamma_1$ and $\gamma_2$ described above, optimizing with respect to the parameters $\eps_1,\ldots,\eps_4$ and the parameters related to the Gaussian beams $v_1$, $v_2$, $v_3$ and $v_4$. The implied constant of the stability estimate at the fixed point estimate depends on $p_0$. To show that the constant can in fact be taken to be independent of $p_0$ we must vary the geodesics $\gamma_1$ and $\gamma_2$ and the corresponding Gaussian beams smoothly. This requires some work. See Section \ref{sec:Gaussian_beams} for details. In case lightlike geodesics intersect several times, we must also use different separation matrices for different points in $W$. We call a suitable finite collection of separation matrices a \emph{separation filter}. This concept is explained in the next section.

\subsection{Lorentzian geometry tools}\label{sec:Lorentzian_geom_tools}
To prove our main results, we make some constructions in Lorentzian geometry. The main constructions we develop are \emph{boundary optimal geodesics} and \emph{separation matrices}. We now explain briefly what these are. Since we expect the constructions to have applications in related inverse problems as well, and they might also be of interest in  Lorentzian geometry in general, this section is written to be independent of the inverse problem we consider. We follow the terminology of \cite{One83} while we have included the used concepts of causality in Section \ref{sec:preliminaries} for an easy access.

Let us first explain what is a boundary optimal geodesic. As before we consider the subset $[0,T]\times \Omega$ of a globally hyperbolic smooth Lorentzian manifold $\R\times M$, $\dim(M)=n\geq 2$, equipped with the metric \eqref{eq:g} and where $\Omega$ is a smooth submanifold of $M$ with boundary and of dimension $n$. The lateral boundary $\Sigma$ refers to the set $[0,T]\times \p \Omega$ as before. As is by now quite standard, see e.g.~\cite{KLU18,One83}, we say that a geodesic connecting the points $x,y\in N$, $x\leq y$, is optimal if the time separation function $\tau$ of these points vanishes, $\tau(x,y)=0$. An optimal geodesic is always lightlike. The time separation function is the supremum of lengths of piecewise smooth future-directed causal paths from $x$ to $y$, see \eqref{def:time_separation} or \cite{One83} for details.

Let us then consider a point $x\in I^{-}(\Sigma)\cap ([0,T]\times \Omega)$. In the inverse problem of this paper, we consider Gaussian beams that vanish on a neighbourhood of $\{t=T\}$. For this, it is required to find future-directed lightlike geodesics of $[0,T]\times \Omega$ from $x\in [0,T]\times \Omega$ to $\Sigma$, which do not intersect the set $\{t=T\}$. In Lemma \ref{optimal_geo}, we show that we may find a point $z_{\textrm{inf}}$ of the lateral boundary $\Sigma$ and an optimal future-directed geodesic $\gamma$ from $x$ to $z_{\textrm{inf}}$. The situation is illustrated in Figure \ref{pic:Special_optimal_geodesics}. 
\begin{figure}[ht!]
 \centering
  \includegraphics[scale=0.3]{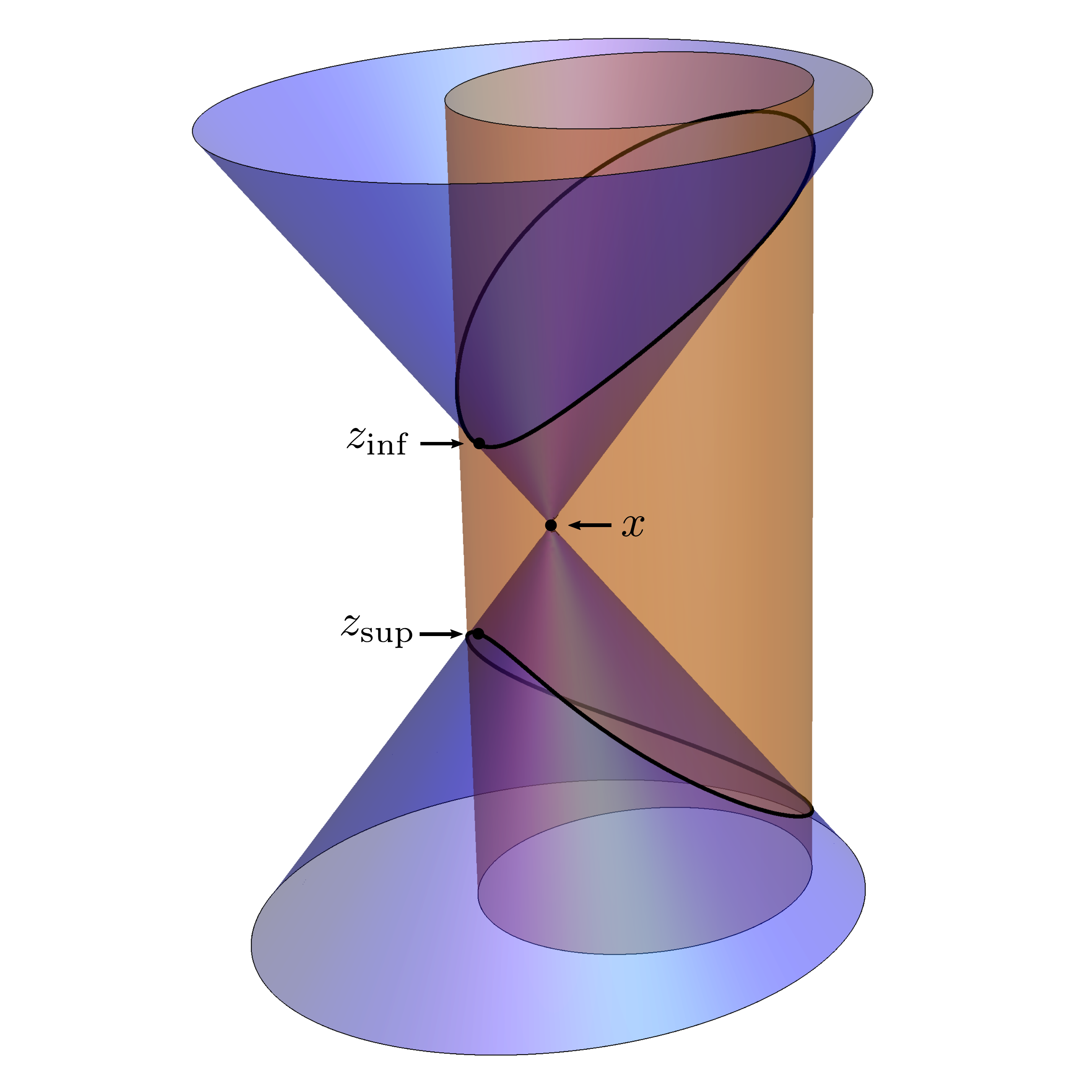}
 \caption{The lateral boundary $\Sigma$ (orange cylinder) intersects the lightcone (blue cone) of a point $x$ (apex of the cone) along the black curves.
 The point $z_\mathrm{sup}$ is the latest and $z_\mathrm{inf}$ the earliest point on $\Sigma$, which can be reached from $x$ by an optimal geodesics. We call these optimal geodesics boundary optimal geodesics. 
  \label{pic:Special_optimal_geodesics}}
 \end{figure}
In the figure, the point $z_{\textrm{inf}}\in \Sigma$, is the point which has the smallest time coordinate in the intersection of the lightlike future of $x$ (the upper cone) and $\Sigma$. The lightlike geodesic $\gamma$ from $x$ to $z_{\textrm{inf}}$ is not only optimal, i.e. $\tau(x,z_{\textrm{inf}})=0$, but it also necessarily intersects $\Sigma$ transversally even if $\Sigma$ would be nonconvex. We call the geodesic $\gamma$ a boundary optimal geodesic. Note that by deforming $\Sigma$ in the figure to a non-convex manifold, it is possible to find optimal geodesics from $x$ to points in $\Sigma$, which intersect $\Sigma$ tangentially. Therefore, not all optimal geodesics are boundary optimal geodesics. For $x\in I^{+}(\Sigma)\cap ([0,T]\times \Omega)$, we may similarly find a past-directed boundary optimal geodesic from $x$ to $z_{\textrm{sup}}\in \Sigma$ also presented in the figure.

Having explained what optimal geodesics and boundary optimal geodesics are, we are ready to present what a separation matrix  is and how it is constructed. In general, if $x_1,\ldots,x_P\in I^{-}(\Sigma)\cap ([0,T]\times \p\Omega)$ satisfy $x_1\leq \cdots \leq x_P$ we show in Lemma \ref{lemma:separation_of_multiple_points} that there are $P$ solutions $v_k$, $k=1,\ldots,P$, to the wave equation $\square_g v=0$ whose Cauchy data vanish on $\{t=T\}$ such that the matrix
\begin{equation}\label{separation_of_multiple_points_matrix_intro}
 \begin{pmatrix}
        v_{1}(x_1) & v_{2}(x_1) &\cdots & v_{P}(x_1) \\
        v_{1}(x_2) & v_{2}(x_2) &\cdots & v_{P}(x_2) \\
        \vdots & & \ddots & \vdots \\
        v_{1}(x_P) & v_{2}(x_P) &\cdots & v_{P}(x_P) \\
    \end{pmatrix}
 \end{equation}
%
is invertible. We call the invertible matrix above a separation matrix. Let us consider here the simplest non-trivial case $P=2$ and assume that $x_1,x_2\in I^{-1}(\Sigma)\cap ([0,T]\times \p\Omega)$ satisfy $x_1\leq x_2$. To construct suitable solutions $v_1$ and $v_2$ in this case, we proceed by first choosing two lightlike geodesics as follows. The choice is illustrated in Figure \ref{pic:Geodesics}, where the points $x_1$ and $x_2$ are the intersection points of the black curves. (In our inverse problem the black curves are also geodesics, but that is not important for the present discussion.) By the discussion above, we may find a boundary optimal geodesic $\gamma_1$ from $x_1$ to $x_{1,\textrm{inf}}\in \Sigma$ and another boundary optimal geodesic $\gamma_2$ from $x_2$ to $\Sigma$.
\begin{figure}[ht!]
 \centering
  \includegraphics[scale=0.25]{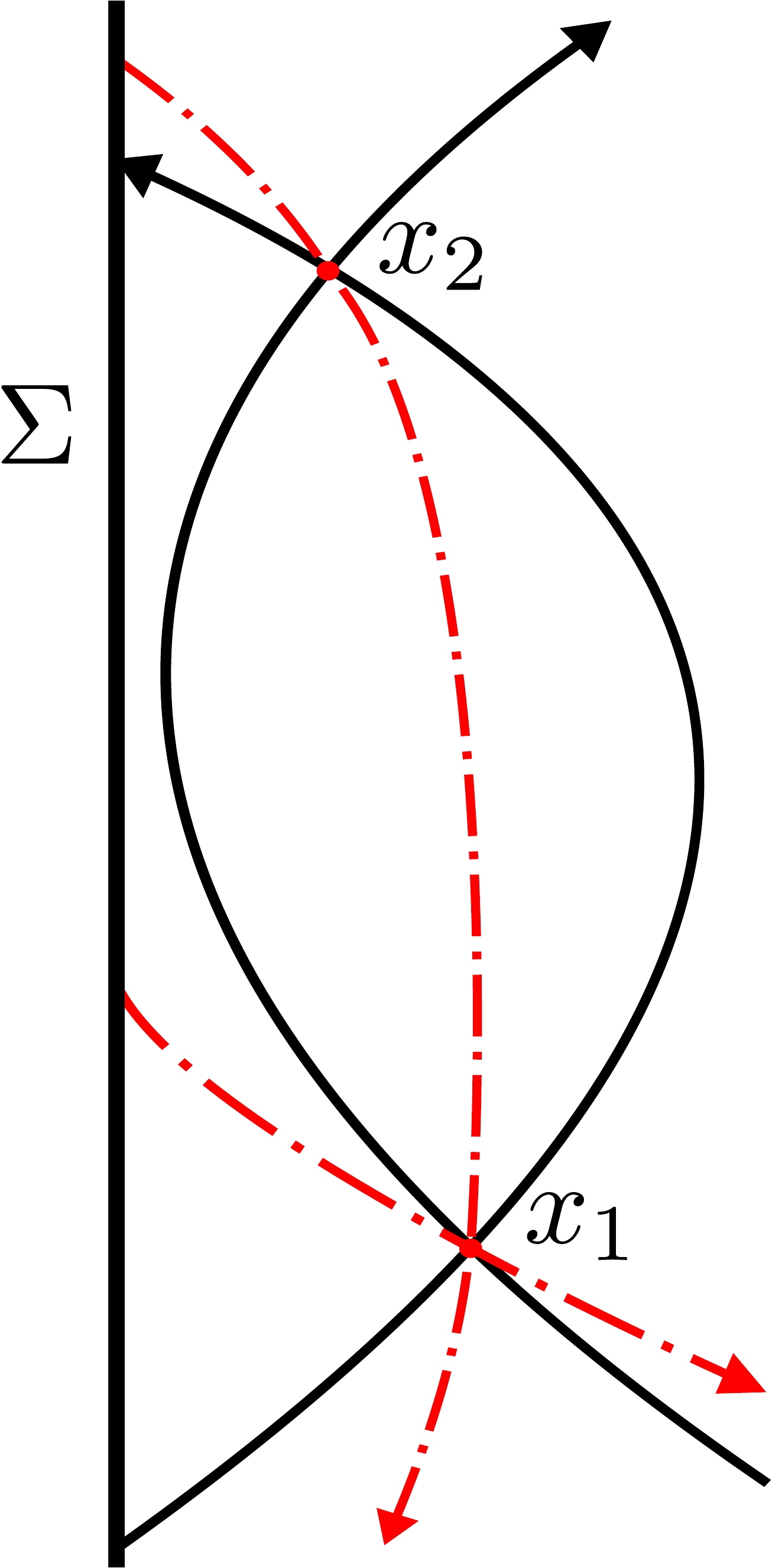}
 \caption{Past-directed lightlike geodesics (red dashed lines) that separate the intersection points $x_1$ and $x_2$ of future-directed lightlike geodesics (black). The geodesics in red and and black intersect $\Sigma$ at a time $t<T$ and $t>0$ respectively. \label{pic:Geodesics}}
 \end{figure}
 Next we note that if $\gamma_1$ also meets $x_2$, then we can perturb the initial direction of $\gamma_1$ at $x_1$ to have a new lightlike geodesic that does not meet $x_2$. Indeed, if the new geodesic would still meet $x_2$, then there would be a shortcut path from $x_2$ to $\Sigma$, which has positive length. This would contradict the condition $\tau(x_1,x_{1,\textrm{inf}})=0$. We refer to the proof of Lemma \ref{lemma:separation_of_multiple_points} for the details.
 
By the above discussion, we have the lightlike geodesic $\gamma_1$ from $x_1$ to $\Sigma$ which does not meet $x_2$ and another lightlike geodesic from $x_2$ to $\Sigma$. Corresponding to these two geodesics there are two Gaussian beams solutions $v_1$ and $v_2$ to $\square_gv=0$ with vanishing Cauchy data at $\{t=T\}$. By using the properties of Gaussian beams, we know that $v_1$ and $v_2$ are concentrated to small neighbourhoods of the corresponding geodesics, respectively. See Section \ref{sec:Gaussian_beams} for details. Thus we have for $k,l=1,2$ that 
\begin{align*}
&|v_k(x_l)|\approx 1,\quad k=l,\\
&|v_k(x_l)|\ll 1,\quad 
k>l,\\
&|v_k(x_l)|\leq c_0,\quad k<l,
\end{align*}
where $c_0>0$ is a constant.
Therefore the matrix $\mathcal{V}$ in \eqref{separation_of_multiple_points_matrix_intro} is approximatively a lower triangular matrix with ones on the diagonal. Thus $\mathcal{V}$ is invertible, hence it is a separation matrix in our terminology. Vaguely speaking, we can separate points by solutions to the wave equation $\square_g v=0$. We mention that a similar condition has been used in the study of inverse problems for elliptic equations in \cite{GST19,LLS19}.

Finally, we mention that when proving our stability result of this paper, we can only use finitely many separation matrices. For this, we show that there are finitely many solutions $v$ to $\square_gv=0$ with vanishing Cauchy data at $\{t=T\}$ such that the separation matrices made out of these solutions can separate any fixed number of points in $I^-(\Sigma)\cap ([0,T]\times \Omega)$ that are distinct in a precise sense. The set of all these solutions is called a \emph{separation filter} and it is denoted by $\mathcal{M}$. See Lemma \ref{lem:separation_filter} for details. A separation filter only depends on the geometry of $([0,T]\times \Omega,g)$.

\subsection{Preliminary definitions}\label{sec:preliminaries}
The Sobolev spaces $H^s$ on a compact smooth manifold can be defined in several ways (up to equivalent norms). We define Sobolev spaces first on the manifold $N = \R\times M$
%
using partition of unity on charts, see e.g. \cite{Hormander3,Roe99,Taylor3}.
%
Sobolev spaces on the time-cylinder $[0,T]\times \Omega$ are then defined by restriction:
\[
H^s([0,T]\times \Omega) := \{ f\big|_{[0,T]\times\Omega} \mid
 f\in H^s(\R\times M)\}.
\]
As usual, the dual space of $H^{r}([0,T]\times \Omega)$, $r\geq 0$, is defined as
\[
\dualH^{-r}([0,T]\times\Omega):= \{ f \in H^{-r}(\R\times M) \mid
 \supp \s  f \s \subset \s [0,T]\times  \overline{\Omega} \}.
\]
It is endowed with the norm $\norm{g}_{\dualH^{-r}([0,T]\times\Omega)}:= {\sup}\, \, \s \s  \frac{|g(v)|}{ \norm{v}_{H^r([0,T]\times M)}}$, where the supremum is over all $v\in H^{r}([0,T]\times M)$ with   $\supp \s v \s \subset [0,T]\times \overline{\Omega}$.
By Riesz representation theorem, one can always find $f_0\in  H^{r}(\R\times M)$ so that for all $v\in H^r(\R\times M)$
\[
\norm{f}_{\tilde H^{-r}([0,T]\times \Omega)}= \norm{f_0}_{H^{r}(\R\times M)}, \qquad f(v)= \left \langle f_0 , v  \right \rangle. 
\]
Additionally, if  $\supp \s v \s \subset  [0,T]\times \overline{\Omega}$, then we have for all $v\in  H^{r}([0,T]\times M)$ the estimate
\[
|f(v)|= |\left \langle f_0 , v  \right \rangle| \leq \norm{f}_{\dualH^{-r}([0,T]\times \Omega)} \norm{v}_{H^r([0,T]\times \Omega)}.
\]
Sobolev spaces of the manifold $\Omega$ with boundary are defined similarly. 

%
%

Next we recall some notations and definitions in time-oriented Lorentzian manifolds, see for example \cite{Beem,One83}.
A smooth path $\mu:(a,b)\to N$ is said to be time-like if $g(\dot\mu(s),\dot\mu(s))<0$ for all $s \in (a,b)$.
The path $\mu$ is causal if $g(\dot\mu(s),\dot\mu(s))\leq 0$ and $\dot\mu(s)\neq 0$ for all $s\in (a,b)$.
For $p,q\in N$ we denote $p\ll q$ if $p\neq q$ and there is a future-pointing time-like path from $p$ to $q$.
Similarly, $p<q$ if $p\neq q$ and there is a future-pointing causal path from $p$ to $q$, and $p\leq q$ when $p=q$ or $p<q$.
The chronological future of $p\in N$ is the set $I^+(p)=\{q\in N\mid p\ll q\}$ and the causal  future of $p$ is $J^+(p)=\{ q\in N\mid p\leq q\}$.
The chronological past $I^-(q)$ and causal past $J^-(q)$ of $q\in N$ are defined similarly.
If $A\subset N$, then we denote $J^\pm(A)=\cup_{p\in A}J^\pm(p)$.
%
%
%
%
The sets  $I^\pm(p)$ are always open and in globally hyperbolic manifolds the sets $J^\pm(p)$ 
are closed, see e.g. \cite[Lemma 14.22]{One83}.
%
The sets $I^\pm(p)$ and $J^\pm(p)$ are related by $\mathrm{cl}(I^\pm(p))=J^\pm(p)$.
%
Finally, a geodesic from $p\in N$ with initial direction $\xi\in T_pN$ is denoted by $\gamma_{p,\xi}(t) = \exp_p(t\xi)$.

\subsection*{Structure of the paper}
This paper is organized as follows.
In Section~\ref{sec:main_results} we present our main results and explain briefly the structure of the proofs.
Section~\ref{sec:forward} studies the forward problem of the non-linear equation \eqref{eq:Main equation}. Most of the proofs of Section \ref{sec:forward} are included in the Appendix \ref{app:forward}.
%
%
Section~\ref{sec:Gaussian_beams} concerns the construction of Gaussian beams in Lorentzian manifolds.
In Section~\ref{sec:separation_of_points} we construct the tools of Lorentzian geometry which we use in our inverse problem. This section in particularly shows it is possible distinguish different points of a Lorentzian manifold by using solutions to the wave equation. The section introduces the concepts of \emph{boundary optimal geodesics} and \emph{separation matrices}.
Finally, in Section~\ref{sec:proof_of_stabilit_estimate} we collect the results we have obtained until that point to give a proof for our main theorem.
For clarity, the proof is split into several parts.

\section{Well-posedness of the forward problem}\label{sec:forward}
%
To prove existence of small solutions for the non-linear wave equation \eqref{eq:Main equation}, we start by recalling the corresponding results for the linear initial-boundary value problem
\begin{equation*}
\begin{cases}
\square_g u  = F, \quad \text{in } [0,T]\times\Omega,\\
u=f, \,\quad\ \text{ on } [0,T]\times\p\Omega ,\\
u\big|_{t=0} = u_0,\quad \partial_t u\big|_{t=0} = u_1,&\text{in }\Omega.
\end{cases}
\end{equation*}
Let $s\in \N$. The convenient spaces for solutions of the wave equation are called \emph{energy spaces} $E^s$ (see e.g.~\cite[Definition 3.5 on page 596]{CB08}),
defined as
\[
 E^s=\bigcap_{0\leq k \leq s}C^k([0,T]\; H^{s-k}(\Omega)).
\]
These spaces are equipped with the norm
\begin{equation}\label{eq:energy_norm}
 \norm{u}_{E^s}=\sup_{0<t<T}\sum_{0\leq k \leq s}\norm{\p_t^ku(\ccdot,t)}_{H^{s-k}(\Omega)}.
\end{equation}
As is the case with the Sobolev spaces $H^s$, the space $E^s$ is an algebra if $s>(n+1)/2$ (see e.g.~\cite{CB08}) and we have the norm estimate
\begin{equation}\label{banach_algebra}
 \norm{u\s v}_{E^s}\leq C_s \norm{u}_{E^s}\norm{v}_{E^s}, \text{ for all } u,v\in E^s.
\end{equation}

\begin{remark} \label{sob:emb_z}
We note that $E^s\subset H^s([0,T]\times\Omega)$. Conversely, due to the standard Sobolev embedding $H^s([0,T]\times\Omega)\subset C^k([0,T]\times\Omega)$, when $s>k+\frac{n+1}{2}$, we have that $H^{s'}([0,T]\times\Omega)\subset E^s$, when $s'>s+\frac{n+1}{2}$. In particular,
\begin{equation}\label{ineq:sob_emb}
\norm{u}_{H^s([0,T]\times\Omega)} \lesssim \norm{u}_{E^s} \lesssim \norm{u}_{H^{s'}([0,T]\times\Omega)}.
\end{equation}
\end{remark}

For the wave equations we consider, we need to assume certain compatibility conditions between the boundary values and the initial data.
%
The compatibility conditions for the equation \eqref{eq:Main equation} to order $2$ are given by
\begin{equation}\label{eq:compatibility}
\begin{aligned}
 f|_{t=0}&=u_0|_{\p \Omega}, \quad \p_tf|_{t=0}=\p_tu|_{\{0\}\times\p \Omega  }=u_1|_{\p \Omega}, \\
 \p_t^2f|_{t=0}&=\p_t^2u|_{\{0\}\times\p \Omega}=\beta^{-1}|_{\{0\}\times\p \Omega}\left(\Delta_h u_0|_{\p \Omega} +F|_{\{0\}\times\p \Omega}\right). 
 \end{aligned}
\end{equation}
The compatibility conditions up to general order $s$ are obtained by setting $\p_t^kf|_{t=0}=\p_t^ku|_{\{0\}\times\p \Omega}$, for $k=0,\ldots,s$, and then solving for $\p_t^ku|_{\{0\}\times\p \Omega}$ in terms of the initial data by using the equation $\square_g\s  u =F$.
These conditions guarantee that at the boundary $\p \Omega$ the initial data $(u_0,u_1)$ is compatible with the corresponding boundary condition $f$. 
These conditions have been discussed for example in~\cite[Section 2.3.7]{KKL01} in the simpler case where the metric is time-independent.
%
Especially, if $\p_t^kf|_{t=0}=0$ for all $k=0,\ldots,s$, or if $f$ is supported away from the Cauchy surface $\{t=0\}$, and $F\equiv 0$ and $u_0\equiv u_1\equiv 0$, then the compatibility conditions of order $s$ hold. 
%

\begin{proposition}[Existence and estimates for linear equation, \cite{Ikawa,LLT86}]\label{thm:energy}
Assume that $(\R\times M, g)$ is a globally hyperbolic Lorentzian manifold as in \eqref{eq:g} and $\Omega\subset M$ is a compact submanifold with non-empty boundary.
Let $s\in\N$ be a positive integer and assume that $F\in E^s$, $f\in H^{s+1}(\Sigma)$, $u_0\in H^{s+1}(\Omega)$ and $u_1\in H^{s}(\Omega)$ satisfy the compatibility conditions.
Then the equation
\begin{equation}\label{wave-eq_with_data}
\begin{cases}
\square_{g} u = F, &\text{in } [0,T]\times \Omega,\\
u=f, &\text{on } \Sigma, \\
u=u_0,\, \p_t u = u_1,&\text{in } \{t=0\}\times \Omega.
\end{cases}
\end{equation}
has a unique solution $u\in E^{s+1}$ satisfying
\begin{equation}\label{energy_estimate}
\Vert u \Vert_{E^{s+1}} 
\leq
C\Big(
\Vert F\Vert_{E^s} + \Vert f\Vert_{H^{s+1}(\Sigma)} + \Vert u_0 \Vert_{H^{s+1}(\Omega)} + \Vert u_1 \Vert_{H^s(\Omega)}
\Big)
\end{equation}
and $\p_\nu u\big|_{\Sigma}\in H^s(\Sigma)$.
\end{proposition}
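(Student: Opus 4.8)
The plan is to treat this as an application of the classical theory for second-order hyperbolic initial-boundary value problems developed in \cite{Ikawa,LLT86}, adapted to the product metric \eqref{eq:g} and repackaged in the energy-space norm \eqref{eq:energy_norm}. The key structural observation is that, since $g=-\beta\,\d t^2+h(t,\ccdot)$ has no mixed $\d t\,\d x^i$ terms and $\beta>0$, the operator takes the form $\square_g=\beta^{-1}\p_t^2-\Delta_{h(t)}+(\text{first order})$, a strictly hyperbolic operator for which the slices $\{t=c\}$ are spacelike. This is exactly the setting of the energy method. First I would reduce to homogeneous Dirichlet data: using surjectivity of the trace map, pick an extension $\Phi\in E^{s+1}$ with $\Phi|_\Sigma=f$, $\norm{\Phi}_{E^{s+1}}\lesssim\norm{f}_{H^{s+1}(\Sigma)}$, chosen to be compatible with the initial data $(u_0,u_1)$, and set $v=u-\Phi$. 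Then $v$ solves $\square_g v=F-\square_g\Phi=:\tilde F\in E^s$ with $v|_\Sigma=0$ and initial data that still satisfy the (now homogeneous-boundary) compatibility conditions \eqref{eq:compatibility}, and it suffices to prove the estimate for $v$.

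For the a priori bound I would run the standard multiplier argument: multiply $\square_g v=\tilde F$ by $\p_t v$ and integrate over $[0,t]\times\Omega$. Because $v|_\Sigma=0$ forces $\p_t v|_\Sigma=0$, the tangential boundary flux across $\Sigma$ vanishes, and integration by parts yields $\tfrac{\d}{\d t}E(t)\lesssim E(t)+\norm{\tilde F(\ccdot,t)}_{L^2(\Omega)}^2$ for the natural energy $E(t)=\int_\Omega\bl\beta^{-1}\abs{\p_t v}^2+\abs{\nabla_h v}_h^2\br\,\d V_h$; Gr\"onwall then gives the $E^1$ estimate. To climb to $E^{s+1}$ I would apply this to the time-differentiated equations: $\p_t^k v$ solves a problem of the same type with source $\p_t^k\tilde F$ plus commutator terms and, crucially, with \emph{zero} Dirichlet data, so the compatibility conditions guarantee that the initial data of $\p_t^k v$ lie in the correct spaces and the induction closes. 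Spatial regularity is then recovered by elliptic estimates on $\Omega$ with Dirichlet condition: solving $\square_g v=\tilde F$ for $\Delta_{h(t)}v$ bounds $\norm{v(\ccdot,t)}_{H^{j}(\Omega)}$ by lower-order time derivatives, and bootstrapping produces \eqref{energy_estimate}. Existence follows from a Galerkin scheme in the Dirichlet eigenbasis of $-\Delta_{h(t)}$ together with these uniform bounds and weak compactness, and uniqueness is immediate from the energy estimate applied to the difference of two solutions with vanishing data.

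The main obstacle, and the sole reason the statement is nontrivial beyond bookkeeping, is the sharp Neumann trace regularity $\p_\nu u|_\Sigma\in H^s(\Sigma)$. The interior regularity only gives $u\in E^{s+1}\subset H^{s+1}([0,T]\times\Omega)$, and the ordinary trace theorem would yield $\p_\nu u|_\Sigma\in H^{s-1/2}(\Sigma)$, which is half a derivative short. Recovering the full $H^s(\Sigma)$ bound is the \emph{hidden regularity} phenomenon of Lasiecka--Lions--Triggiani \cite{LLT86}. Here I would introduce a smooth vector field $X$ on $\Omega$ with $X=\nu$ on $\p\Omega$ and use $Xv$ as a multiplier in $\square_g v=\tilde F$; since the tangential derivatives of $v$ vanish on $\Sigma$, the boundary term collapses to $\int_\Sigma\abs{\p_\nu v}^2\,\d S$, which is then controlled by the interior energy, giving $\p_\nu v|_\Sigma\in L^2(\Sigma)$ bounded by $\norm{v}_{E^1}$. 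Applying the same identity to tangential and time derivatives of $v$ upgrades this to $\p_\nu v|_\Sigma\in H^s(\Sigma)$, and adding back the contribution of the smooth extension $\Phi$ returns $\p_\nu u|_\Sigma\in H^s(\Sigma)$ with the stated bound. This multiplier step is where the Lorentzian geometry enters quantitatively, and is the part I expect to require the most care.
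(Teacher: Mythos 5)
Your strategy differs fundamentally from the paper's: the paper does not reprove the analytic core at all, but cites the Euclidean-domain results of \cite{Ikawa,LLT86}, transfers them to $[0,T]\times\Omega$ via the conformal rescaling $\beta^{3/2}\square_g\beta^{-1/2}=\p_t^2-\Delta_{\beta^{-1}h}+V$, a partition of unity, finite speed of propagation, and time-stepping, and then gets \eqref{energy_estimate} abstractly from the closed graph theorem. Working directly on the manifold with energy methods, as you propose, would have the advantage of avoiding the patching — but your argument has a genuine gap at its very first step.

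The gap is the reduction to homogeneous Dirichlet data. You claim that $v=u-\Phi$ solves a problem with source $\tilde F=F-\square_g\Phi\in E^s$, but this is false for any lifting $\Phi$ controlled by $\norm{f}_{H^{s+1}(\Sigma)}$: since $\square_g$ is second order, $\Phi\in E^{s+1}$ only gives $\square_g\Phi\in E^{s-1}$, and no extension operator can do better because the trace theorem loses half a derivative — an extension of $f\in H^{s+1}(\Sigma)$ is at best $H^{s+3/2}([0,T]\times\Omega)$-regular, so $\square_g\Phi$ is at best $H^{s-1/2}$, never $E^s$. Feeding this into the homogeneous energy theory leaves you half to one derivative short: your route proves the proposition only for $f\in H^{s+3/2}(\Sigma)$ (or $H^{s+2}(\Sigma)$, depending on bookkeeping), which is strictly weaker than the statement. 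What is being asserted here is precisely the \emph{sharp} regularity of Lasiecka--Lions--Triggiani: interior regularity $E^{s+1}$ matching the boundary regularity $H^{s+1}(\Sigma)$ with no loss. In \cite{LLT86} this is obtained not by lifting but by transposition/duality against the homogeneous backward problem: the multiplier identity with a vector field $X$ extending $\nu$ — which you invoke only at the end, to recover the Neumann trace — is actually the engine of the whole proof, since it yields the hidden regularity $\p_\nu\phi\in L^2(\Sigma)$ for the adjoint homogeneous problem, from which the interior regularity of the inhomogeneous Dirichlet problem follows by duality, and higher regularity then follows by time-differentiation, the compatibility conditions, and elliptic estimates. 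In short, you have misplaced the difficulty: the nontrivial content is not only the trace estimate $\p_\nu u|_\Sigma\in H^s(\Sigma)$ but already the claim $u\in E^{s+1}$ itself, and the lifting-plus-Gr\"onwall route cannot reach it. (Your ingredients for the homogeneous problem — the $\p_t v$ multiplier, Gr\"onwall, Galerkin existence, elliptic bootstrap, uniqueness — are standard and fine.)
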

As we could not find a proof for Proposition~\ref{thm:energy} in general for globally hyperbolic Lorentzian manifolds, we have included one in Appendix~\ref{app:forward}.
The energy estimates of the linear problem \eqref{wave-eq_with_data} directly allow us to conclude that the non-linear problem \eqref{eq:Main equation} has a unique small solution. The proof of the following lemma is similar to the one in \cite[Proof of Lemma 1, Appendix A]{LLPT}. We omit the proof.
\begin{lemma}\label{lemma:nonlinear-solutions}
Let $m\geq 2$ be an integer and $\Omega\subset M$ be a compact submanifold, $\dim(\Omega)=\dim(M)$, with nonempty boundary. Assume $s\in \N$ is such that  $s+1>(n+1)/2$. Suppose  that $q\in C^{s+1}([0,T]\times \Omega)$ satisfies the a priori bound $\Vert q\Vert_{C^{s+1}}\leq c$.
%
Then there is $\kappa>0$ and $\rho>0$ such that if $f\in H^{s+1}(\Sigma)$ satisfies $\Vert f\Vert_{H^{s+1}(\Sigma)} \leq \kappa$,
  and $\p_t^{\alpha}f|_{t=0}=0$ for all $ \alpha=0, \ldots, s$ on $[0,T]\times\p\Omega$,
then
there is a unique solution to 
\begin{equation}\label{eq:nonlinear_equation_lemma}
\begin{cases}
\square_g u + qu^m   =0, &\text{in } [0,T]\times \Omega,\\
u=f, &\text{on } [0,T]\times \p \Omega, \\
u\big|_{t=0} = \partial_t u\big|_{t=0} = 0,&\text{in } \Omega
\end{cases}
\end{equation}
in the ball
\[
B_\rho(0):= \{u \in E^{s+1} \mid \Vert u \Vert_{E^{s+1}} < \rho\}\subset E^{s+1}.
\]
Furthermore, the solution satisfies the estimate
\begin{equation}\label{eq:solution-estimate1}
\Vert u\Vert_{E^{s+1}} \leq C_0\s  \Vert f\Vert_{H^{s+1}(\Sigma)},
\end{equation}
where $C_0>0$ is a constant independent of $f$ and $q$.
\end{lemma}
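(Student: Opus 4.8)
The plan is to solve \eqref{eq:nonlinear_equation_lemma} by a contraction argument that converts the semi-linear problem into a fixed-point problem for a map built from the linear solution operator of Proposition~\ref{thm:energy}. For $w$ in a small closed ball of $E^{s+1}$, let $\Phi(w)=u$ be the unique solution, furnished by Proposition~\ref{thm:energy}, of the linear problem $\square_g u = -q\s w^m$ with boundary value $f$ on $\Sigma$ and vanishing Cauchy data on $\{t=0\}$; a fixed point $u=\Phi(u)$ is then precisely a solution of \eqref{eq:nonlinear_equation_lemma}. First I would check that $\Phi$ is well defined. Since $s+1>(n+1)/2$, the space $E^{s+1}$ is a Banach algebra by \eqref{banach_algebra}, so $w^m\in E^{s+1}$, and because $q\in C^{s+1}([0,T]\times\Omega)\subset E^{s+1}$ the source $-q\s w^m$ lies in $E^{s+1}\subset E^s$ with $\norm{q\s w^m}_{E^s}\leq \norm{q\s w^m}_{E^{s+1}}\leq C\s c\,\norm{w}_{E^{s+1}}^m$, the constant depending only on $s$ and the a priori bound $c$ (here I use the monotonicity $\norm{\ccdot}_{E^s}\leq\norm{\ccdot}_{E^{s+1}}$ read off from \eqref{eq:energy_norm}).

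The second point is compatibility. I would restrict $\Phi$ to the closed subspace $X$ of those $w\in E^{s+1}$ whose time-derivatives vanish at $t=0$ to order $s$. Together with the hypothesis $\p_t^\alpha f|_{t=0}=0$ for $\alpha=0,\dots,s$ and the vanishing Cauchy data, this forces the source $-q\s w^m$ to vanish to high order at $t=0$, so the compatibility conditions of order $s$ hold, as recorded after \eqref{eq:compatibility}, and Proposition~\ref{thm:energy} is applicable. Reading the time-derivatives off the equation then shows $\Phi(w)\in X$ as well, so $\Phi$ maps $X$ into itself.

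Next I would arrange that $\Phi$ is a self-map of, and a contraction on, a small closed ball. The energy estimate \eqref{energy_estimate} applied to $\Phi(w)$ gives $\norm{\Phi(w)}_{E^{s+1}}\leq C\bl\norm{q\s w^m}_{E^s}+\norm{f}_{H^{s+1}(\Sigma)}\br\leq C\bl C\s c\,\rho^{m}+\kappa\br$ whenever $\norm{w}_{E^{s+1}}\leq\rho$; since $m\geq 2$, choosing $\rho$ small and then $\kappa$ small makes the right-hand side at most $\rho$, so $\Phi$ preserves $\overline{B_\rho(0)}\cap X$. For $w_1,w_2$ in this ball, the difference $\Phi(w_1)-\Phi(w_2)$ solves the linear problem with source $-q(w_1^m-w_2^m)$ and zero boundary and Cauchy data; factoring $w_1^m-w_2^m=(w_1-w_2)\sum_{j=0}^{m-1}w_1^{j}w_2^{m-1-j}$ and using \eqref{banach_algebra} and \eqref{energy_estimate} yields $\norm{\Phi(w_1)-\Phi(w_2)}_{E^{s+1}}\leq C\s c\,m\s\rho^{m-1}\norm{w_1-w_2}_{E^{s+1}}$. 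Shrinking $\rho$ further makes this factor smaller than $1$, so the Banach fixed point theorem produces a unique $u\in\overline{B_\rho(0)}\cap X$ with $u=\Phi(u)$, which is the desired solution; uniqueness within $B_\rho(0)$ is immediate from the contraction estimate.

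Finally, the linear bound \eqref{eq:solution-estimate1} follows by feeding the solution back into itself: from $u=\Phi(u)$ and \eqref{energy_estimate}, $\norm{u}_{E^{s+1}}\leq C\bl C\s c\,\norm{u}_{E^{s+1}}^{m-1}\norm{u}_{E^{s+1}}+\norm{f}_{H^{s+1}(\Sigma)}\br$, and since $C\s c\,\norm{u}_{E^{s+1}}^{m-1}\leq C\s c\,\rho^{m-1}\leq\tfrac12$ for $\rho$ small, the nonlinear term is absorbed to give $\norm{u}_{E^{s+1}}\leq C_0\norm{f}_{H^{s+1}(\Sigma)}$ with $C_0$ independent of $f$ and $q$. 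I expect the main subtlety to be the bookkeeping of the compatibility conditions under iteration — ensuring at every step that $-q\s w^m$ is compatible so that Proposition~\ref{thm:energy} applies — since the remaining estimates are routine consequences of the algebra property \eqref{banach_algebra} and the energy estimate \eqref{energy_estimate}.
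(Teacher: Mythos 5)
Your proposal is correct and follows essentially the same route as the paper: the paper omits the proof, citing \cite[Lemma 1, Appendix A]{LLPT}, where exactly this Banach fixed-point argument is carried out using the linear energy estimates of Proposition~\ref{thm:energy} together with the algebra property \eqref{banach_algebra}. Your handling of the compatibility conditions via the closed subspace $X$ of functions whose time derivatives vanish to order $s$ at $t=0$ (so that the source $-q\s w^m$ vanishes to order $s$ there) is precisely the bookkeeping needed to apply Proposition~\ref{thm:energy} at each iteration.
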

If the boundary data of the non-linear equation~\eqref{eq:nonlinear_equation_lemma} depends on small parameters, we may expand the corresponding solution $u$ in terms of the small parameters. Indeed, let $\eps_1,\ldots, \eps_m>0$ be small parameters and denote 
$$
\vec\eps = (\eps_1,\ldots,\eps_m).
$$
Consider the following boundary value in \eqref{eq:nonlinear_equation_lemma}
$$
f(x) = \sum_{j=1}^m \eps_j f_j(x),
$$
where $f_j\in H^{s+1}(\Sigma)$, $j=1,\ldots,m$, satisfies the compatibility conditions to order $s$ and $\Vert f\Vert_{H^{s+1}(\Sigma)} \leq \kappa$ for some $\kappa>0$.
Let us denote in the usual multi-index notation
$$
\k = (k_1,\ldots,k_m),
$$
where $k_j\in \{0,\ldots,m\}$.
Then by repeating proof of Proposition 1 in~\cite{LLPT}, we find that $u$ can be expanded as
\begin{equation}\label{id:expam_epsilons}
u=\sum_{j=1}^m\epsilon_j v_j +\sum_{|\k|=m}\binom{m}{k_1,\ldots, k_m}\epsilon_1^{k_1}\cdots \epsilon_m^{k_m}w_{\k} 
+\mathcal{R}.
\end{equation}
The functions $v_j$, $j=1,\ldots,m$, satisfy
\begin{equation}\label{eq:norm_21}
\begin{cases}
\square_g v_j=0, &\text{in } [0,T]\times\Omega,\\
v_j=f_j,&\text{on }  [0,T]\times\p\Omega, \\
v_j\big|_{t=0} = 0,\quad \partial_t v_j\big|_{t=0} = 0,&\text{in } \Omega
\end{cases}
\end{equation}
and 
the functions $w_\k$ satisfy
\begin{equation}\label{eq:norm_2}
\begin{cases}
\square_g w_\k +  q\s  v_1^{k_1}\cdots v_m^{k_m}  = 0, &\text{in }  [0,T]\times\Omega,\\
w_\k=0, &\text{on } [0,T]\times\p\Omega, \\
w_\k\big|_{t=0} = 0,\quad \partial_t w_\k\big|_{t=0} = 0,&\text{in } \Omega.
\end{cases}
\end{equation}
The remainder $\mathcal{R}$ is bounded in the energy spaces as follows:
\begin{equation}\label{est:square_R}
\begin{split}
\norm{\mathcal{R}}_{E^{s+2}}&\leq c(s, T)\s   \norm{q}_{E^{s+1}}^2 \left\Vert \sum_{j=1}^m \eps_jf_j\right\Vert_{H^{s+1}(\Sigma)}^{2m-1},  \\
\norm{\square\, \mathcal{R}}_{E^{s+1}} &\leq C(s, T)\s   \norm{q}_{E^{s+1}}^2 \left\Vert \sum_{j=1}^m \eps_jf_j\right\Vert_{H^{s+1}(\Sigma)}^{2m-1}.
\end{split}
\end{equation}

By using the expansion formula \eqref{id:expam_epsilons}, we will next derive an integral equation, which relates the potential $q$ to the DN map.  In general, relating an unknown in an inverse problem for a non-linear equation to an formula for solutions is called a \emph{higher order linearization} method. See for example \cite{KLU18,LLLS19a,LUW18}, where solutions are differentiated with respect to small parameters. 
However, as we are interested in stability of our inverse problem, we need accurate control on the remainder terms.
For this reason, following \cite{LLPT}, instead of differentiating we use finite differences $D^m_{\vec\eps}$. 
The mixed finite difference of $u$ at $\vec\eps=0$, that is, $\eps_1=\cdots=\eps_m=0$, is defined by the formula
\begin{equation}\label{eq:fin_diff}
D_{\vec\eps}^m\big|_{\vec\eps=0} u_{\eps_1f_1+\cdots+\eps_mf_m}
=
\frac{1}{\eps_1\cdots\eps_m}\sum_{\sigma\in\{0,1\}^m}
(-1)^{|\sigma|+m}u_{\sigma_1\eps_1 f_1+\ldots+\sigma_m\eps_m f_m},
\end{equation}
where $u_{\eps_1f_1+\cdots+\eps_mf_m}$ is the unique solution to \eqref{eq:nonlinear_equation_lemma} with $f$ replaced by $ \varepsilon_1 f_1 +\cdots + \varepsilon_m f_m$.
Then the mixed finite difference $D^m_{\vec\eps}$ of the solution $u$ of~\eqref{eq:nonlinear_equation_lemma} takes the form
\begin{equation}\label{eq:mixed_difference}
 D^m_{\vec\eps}\big|_{\vec\eps=0} u=m!\s w_{1,1,\ldots,1}+ D^m_{\eps_1\eps_2\cdots\eps_m}\big|_{\vec\eps=0}\mathcal{R}.
\end{equation}
For more details about the finite differences of $u$, we refer the reader to \cite[Appendix C]{LLPT}.

Let $v_0$ be an auxiliary function solving $\square_g \s v_0=0$ with $v_0|_{t=T} =\p_t v_0|_{t=T} = 0$ in  $\Omega$. 
By multiplying the DN-map $\Lambda$ by $v_0$ and integrating by parts we obtain
 \begin{align*}
 &\int_{\Sigma}v_0\s D_{\vec\eps}^m\big|_{\vec\eps=0} \s \Lambda(\eps_1f_1+\cdots+\eps_mf_m) \s \d S =\int_{\Sigma}v_0\s D_{\vec\eps}^m\big|_{\vec\eps=0} \s \p_\nu u_{\eps_1f_1+\cdots+\eps_mf_m} \s \d S \\
 &\quad\quad=m!\int_{ [0,T]\times\Omega} v_0 \square_g\s w_{1,1,\ldots,1} \s \d V_g  
 +
  \frac{1}{\eps_1\cdots\eps_m}\int_{ [0,T]\times\Omega} v_0 \square_g\s\widetilde{\mathcal{R}}\s \d V_g . 
\end{align*}
Here we denoted 
\begin{equation}\label{eq:tildeR}
\tildeR := \eps_1\eps_2 \ldots \eps_m \s D_{\vec\eps}^m\big|_{\vec\eps=0} \mathcal{R}
\end{equation} 
 and $\tildeR$ satisfies
\begin{equation}\label{est:square_tildeR}
\begin{split}
\norm{\widetilde{\mathcal{R}}}_{E^{s+2}}&\leq c(s, T)\s   \norm{q}_{E^{s+1}}^2 \sum_{\sigma\in\{0,1\}^m}
\Vert \sigma_1\varepsilon_1 f_1 +\cdots + \sigma_m\varepsilon_m f_m\Vert_{H^{s+1}(\Sigma)}^{2m-1},  \\
\norm{\square\, \widetilde{\mathcal{R}}}_{E^{s+1}} &\leq C(s, T)\s   \norm{q}_{E^{s+1}}^2 \sum_{\sigma\in\{0,1\}^m}
\Vert \sigma_1\varepsilon_1 f_1 +\cdots + \sigma_m\varepsilon_m f_m\Vert_{H^{s+1}(\Sigma)}^{2m-1}.
\end{split}
\end{equation}
We have arrived to the following integral identity which connects the potential $q$ with the DN-map $\Lambda$.\\
\noindent\textbf{Integral identity:}
\begin{equation}\label{eq:integral_identity_finite_difference}
\begin{aligned}
  -m!\int_{ [0,T]\times\Omega} q\s v_0\s v_1\s v_2\cdots v_m \s \d V_g 
  &=
\int_{\Sigma}v_0\s D_{\vec\eps}^m\big|_{\vec\eps=0}\s \Lambda(\eps_1f_1+\cdots+\eps_mf_m)\s \d S\\&\qquad + \frac{1}{\eps_1\eps_2\cdots\eps_m}\int_{ [0,T]\times\Omega} v_0\square\s \widetilde{\mathcal{R}}\s \d V_g .
\end{aligned}
\end{equation}
Our analysis of the inverse problem is based on this formula.

\section{Gaussian beams}\label{sec:Gaussian_beams}   
In this section we record some facts about Gaussian beams. 
Gaussian beams on a Lorentzian manifold $(N,g)$, $\dim(N)=n+1\geq 3$, are approximate solutions to the equation $\square_gv=0$. If $s$ is a geodesic parameter of a lightlike geodesic $\gamma$ and $(s,y)$, $y=(y_1,\ldots,y_n)\in \R^n$, are suitable Fermi coordinates (see \eqref{Fermi_coords_def} below) on a neighbourhood of the graph of $\gamma$, then a Gaussian beam in the coordinates $(s,y)$ looks roughly like
\begin{equation}\label{Gaussian_beam_behvior}
 e^{iy_1\tau -a\tau |y|^2},
\end{equation}
up to a normalization. Here $a>0$ and $\tau$ is a large parameter. Therefore, the qualitative behavior of a Gaussian beam is oscillation in a direction $y_1$ transversal to the geodesic $\gamma$ and Gaussian concentration around the graph of $\gamma$. We denote the graph of $\gamma$ by $\Gamma$. 

The construction of Gaussian beams is well-known, see e.g.~\cite{BBM,FO19, Ralston}. We include details about the construction since we wish to keep track of the constants that will be implicit in our stability estimate of Theorem~\ref{thm:stability}.  Our presentation of the construction follows closely~\cite[Section 4]{FO19} to which we refer for omitted details. 
We mention here the recent work \cite{KTS20}, which constructs related Gaussian beam quasimodes in a Riemannian setting by using more sophisticated methods, which lead to better estimates.

Fermi coordinates are constructed by inverting the map
\begin{equation}\label{Fermi_coords_def}
 (s,y)\mapsto \text{exp}_{\gamma(s)}\big(\sum_{k=1}^{n}y^k\s e_k(s)\big) \in X.
\end{equation}
Here $e_k(s)$ are the parallel transportations along a lightlike geodesic $\gamma$ of the last $n$ vectors of a frame $\{e_0,e_1,\ldots, e_n\}$ of $T_{\gamma(0)}$ with 
\[
 e_0=\dot\gamma(0).
\]
The other vectors of the frame are chosen so that for $j,k=2,\ldots n$ hold
\begin{equation}\label{light_cone_frame}
  g(e_0,e_0)=0, \quad g(e_1,e_1)=0, \quad g(e_0,e_1)=-2, \quad g(e_j, e_k)=\delta_{jk}.
\end{equation}
The frame $\{e_0,e_1,\ldots, e_n\}$ is called a pseudo-orthonormal frame. (Due to relation to the usual light-cone coordinates, we could also call it a lightcone frame.)
Since the frame $\{e_0(s),e_1(s),\ldots, e_n(s)\}$ is the parallel transportation of $\{e_0,e_1,\ldots, e_n\}$ along  $\gamma$, the conditions~\eqref{light_cone_frame} hold for $e_j$, $j=0,\ldots, n$, replaced with $e_j(s)$ and $e_0(s)=\dot\gamma(s)$. 

We work in the Fermi coordinates described above. 
In the Fermi coordinates $(s,y)$, the geodesic $\gamma$ corresponds to $(s,0)$ and the coordinate representation $g|_{\gamma}=g(s,0)$ of the metric $g$ restricted to $\gamma$ satisfies
\begin{equation}\label{formula_for_g}
 g|_{\gamma}=-2ds\s dy_1+ \sum_{k=2}^n dy_k\s dy_k.
\end{equation}

Gaussian beams are constructed by using a WKB ansatz $e^{i\tau \Theta(s,y)} a(s,y)$ to approximatively solve the equation $\square_gv=0$ in the Fermi coordinates $(s,y)$. We have
 \begin{equation}\label{box_of_WKB}
  \square_g(e^{i\tau \Theta}a)=e^{i\tau \Theta}(\tau^2g(d\Theta,d\Theta)-2i\s \tau\s g(d\Theta,da)+i\tau\s (\square_g \Theta)a+ \square_g a).
 \end{equation}
 We will choose a \emph{phase function} $\Theta$ and an \emph{amplitude function} $a$ so that the right hand side of~\eqref{box_of_WKB} is $O(\tau^{-K})$ in $H^k([0,T]\times \Omega)$ for given $k\geq 0$ and $K\in \N$.
 To do so,  we first approximatively solve the eikonal equation
 \begin{equation}\label{eikonal_equation}
  g(d\Theta,d\Theta)=0.
 \end{equation}
 After finding an (approximative) solution $\Theta$ to the eikonal equation, we equate the last three terms of~\eqref{box_of_WKB} by inserting $\Theta$ into
 \begin{equation}\label{transport_eq}
  -2i\s \tau\s g(d\Theta,da)+i\tau(\square_g \Theta)a+ \square_g a=0.
 \end{equation}
 By assuming an expansion 
 \begin{equation}\label{ansatz_for_a}
  a=a_0+\tau^{-1}a_{1}+\tau^{-2}a_{2}+\cdots +\tau^{-N}a_{N}
 \end{equation}
 for the amplitude $a$, where $N \in \mathbb{N}$ is to be chosen later, we are led by equating the powers of $\tau$ to a family of $N+1$ equations
 \begin{align}\label{transp_ord_0}
  &-2i\s g(d\Theta,da_0)+i(\square_g \Theta)a_0=0, \\
\label{transp_ord_higher}
  &-2i\s g(d\Theta,da_{j})+i(\square_g \Theta)a_{j}-\square_g a_{j-1}=0,
 \end{align}
 $j=1,\ldots, N$. We solve these equations approximatively and recursively in $j$ starting  from $a_0$. 
%
%
 The equations~\eqref{transp_ord_0} and~\eqref{transp_ord_higher} are called transport equations.
 
 
 In what follows, we refer to~\cite{FO19} for omitted details. 
%
%
 To solve the eikonal equation \eqref{eikonal_equation} approximatively, one sets
 \[
  \Theta=\sum_{j=0}^N\Theta_j(s,y),
 \]
 where $\Theta_j(s,y)$ is a homogeneous polynomial of order $j$ in $y\in \R^n$. We say that  $g(d\Theta,d\Theta)$ vanishes to order $N$ on $\Gamma$, or that $g(d\Theta,d\Theta)=0$ is satisfied to order $N$ on $\Gamma$,
 if 
  \[
   (\p_y^\alpha g(d\Theta,d\Theta))(s,0)=0,
  \]
  where $\alpha$ is any multi-index with $\abs{\alpha}\leq N$.
  We set
 \begin{equation}\label{Theta_first_ord}
  \Theta_0=0 \text{ and } \Theta_1=y_1.
 \end{equation}
 It follows that
 \[
  g(d\Theta,d\Theta)(s,0)=0 \text{ and } (\p_{y_l}g(d\Theta,d\Theta))(s,0)=0,
 \]
 where $l=1,\ldots,n$. That is, the eikonal equation~\eqref{eikonal_equation} is satisfied to order $1$ on $\Gamma$.  The conditions~\eqref{Theta_first_ord} imply the invariantly written conditions
 \[
\Theta(\gamma(s)) = 0 \text{ and }\nabla \Theta(\gamma(s)) = e_1(s).  
 \]
 
 

  To have that $g(d\Theta,d\Theta)=0$ is satisfied to order $2$ on $\Gamma$ is more complicated. For this, one uses the quadratic ansatz
  \[
   \Theta_2(s,y)=y\cdot H(s)\s y,
  \]
  where $H(s)$ is a complex $n\times n$ matrix and $\cdot$ refers to the usual $\R^n$ inner product and $y\in \R^n$. This ansatz leads to the Riccati equation, which is a first order matrix valued ODE. For our purposes, the form of the Riccati equation is not important and we suffice to say that one can find a complex solution $H(s)$ to the equation with $\text{Im}(H(s))>0$. The conditions $\text{Im}(H(s))>0$ and $\Theta_0=0$ together imply the invariantly written conditions
  \[
   \mathrm{Im}(\nabla^2 \Theta(\gamma(s)))\geq 0 \text{ and } \mathrm{Im}(\nabla^2 \Theta)(\gamma(s))|_{\dot{\gamma}(s)^{\perp}} > 0.
  \]
Here we use the notation $\dot{\gamma}(s)^{\perp}$ to denote the algebraic complement to $\dot{\gamma}(s)$ in $T_{\gamma(s)}M$. That is $\mathbb{\R}\s \dot{\gamma}(s) \oplus \dot{\gamma}(s)^{\perp} = T_{\gamma(s)}N$.
%
%

  Solving the eikonal equation to order $2$ is enough to understand the qualitative properties of the phase function $\Theta$ needed in our inverse problem. However, we wish to have that 
  \begin{equation}\label{Nth_ord_sol}
   \square_g (e^{is \Theta(x)} a(x))=O_{H^k([0,T]\times \Omega)}(\tau^{-K}). 
  \end{equation}
%
%
  For this, we solve the eikonal equation to an order $N$, which depends on $k$ and $K$. This can be done by solving additional ODEs, but we omit the details. After finding $\Theta$ so that $g(d\Theta,d\Theta)$ vanishes to order $N$ on $\Gamma$, the term $\tau^2g(d\Theta,d\Theta)$ in the expansion~\eqref{box_of_WKB} of $\square_g(e^{i\tau\Theta}a)$ satisfies
  \begin{equation}\label{eikonal_approximation_error}
   \tau^2g(d\Theta,d\Theta)\leq C_0\tau^2|y|^{N+1}.
  \end{equation}
We choose a specific $N$ later. 
  
 Next we insert the phase function $\Theta$ that we have constructed into the transport equations~\eqref{transp_ord_0} and~\eqref{transp_ord_higher} to find an amplitude function $a$. 
 To solve the transport equations, we write
 \begin{equation}\label{def:a_k}
  a_{k}=\chi\Big(\frac{|y|}{\delta'}\Big) b_{k},
 \end{equation}
 so that 
 \[
a=\chi(|y|/\delta')\sum_{k=0}^N \tau^{-k}b_{k}.  
 \]
 Here $\chi\in C_c^\infty(\R)$ is a fixed cutoff function, which is identically $1$ on a neighbourhood of $0\in \R$ and $\delta'>0$ is chosen small enough so that $\chi(|y|/\delta')$ is compactly supported in the domain of the Fermi coordinates. 
 
 We seek for each of the $b_{k}$, $k=1,\ldots, N$,  the form
 \begin{equation}\label{def:v_k}
 b_{k}=\sum_{j=0}^N b_{k,j}(s,y),
 \end{equation}
 where $b_{k,j}(s,y)$ is a complex valued homogeneous polynomial of order $j$ in $y$. We are interested in the specific form only of the leading term $v_{0,0}$. The transport equation concerning $b_0$ is $-2\s g(d\Theta,da_0)+(\square_g \Theta)a_0=0$, which is satisfied to order $0$ if 
 \[
  -2\s g(d\Theta,db_{0,0})(s,0)+(\square_g \Theta)b_{0,0}(s,0)=0.
 \]
 Here we used that $\chi(\abs{y}/\delta')=1$ to order $1$ at $y=0$. We have $d\Theta (s,0)=dy^{1}$ and $g^{01}(s,0)=-1$. It is calculated in~\cite[Section 4.2]{FO19} that $(\square_g \Theta)(s,0)=\frac{d}{ds}\log \det (Y(s))$, where $Y(s)$ is a one parameter non-degenerate matrix field, which solves an ODE with the initial condition $Y(0)=I_{n\times n}$. Thus we have that the equation for $b_{0,0}(s)$ is solved by 
\begin{equation}\label{eq:Ric_z_1}
 b_{0,0}(s) =\det(Y(s))^{-\frac{1}{2}},
 \end{equation}
 with
 \begin{equation}\label{eq:Ric_z_2}
  b_{0,0}(0)=1.
 \end{equation}
%
%
 
 The terms $b_{0,j}$, $j=1,\ldots, N$, are constructed by solving linear ODEs so that $-2\s g(d\Theta,da_0)+(\square_g \Theta)a_0=0$ is satisfied to order $N$. The higher order transport equations~\eqref{transp_ord_higher} concerning $b_{k}$, $k\geq 1$, can be solved recursively to order $N$ by using similar arguments. We omit the details, and only conclude that there is $C_1>0$ so that
 \begin{align}\label{transport_approximation_error}
  \abs{-2i \s g(d\Theta,da_0)+i(\square_g \Theta)a_0}&\leq C_1 |y|^{N+1}, \\
  \abs{-2i\s g(d\Theta,da_{k})+i(\square_g \Theta)a_{k}-\square_g a_{k-1}}&\leq  C_1  |y|^{N+1},\nonumber
 \end{align}
 $k=1,\ldots, N$. 
 Since $a=a_0+\tau^{-1}a_{1}+\tau^{-2}a_{2}+\cdots +\tau^{-N}a_{N}$, we have that 
 \begin{align*}
  &-2i\s \tau\s g(d\Theta,da)+i\tau\s (\square_g \Theta)a+ \square_g a \\
  &= \tau \sum_{k=0}^N\tau^{-k}(-2i\s \s g(d\Theta,da_{k})+i\s (\square_g \Theta)a_{k})+\sum_{k=0}^N\tau^{-k}\square_g a_{k} \\
  &=\tau \sum_{k=1}^N\tau^{-k}(-2i\s g(d\Theta,da_{k})+i\s (\square_g \Theta)a_{k}+\square_g a_{k-1}) \\
  &\quad  + \tau(-2i  \s g(d\Theta,da_0)+i(\square_g \Theta)a_0)+ \tau^{-N}\square_g a_{N} \\
  &= \tau \s O_{L^\infty}(|y|^{N+1}) + O(\tau^{-N}). 
 \end{align*}
 By additionally recalling from~\eqref{eikonal_approximation_error} that $\tau^2g(d\Theta,d\Theta)\leq C_0\tau^2|y|^{N+1}$, we have
  \begin{align}\label{full_estimate}
  e^{-i\tau \Theta}\square_g (e^{i\tau \Theta} a)&=\tau^2g(d\Theta,d\Theta)-2i\s \tau\s g(d\Theta,da)+i\tau\s (\square_g \Theta)a+ \square_g a \nonumber \\
  &\leq  C_0\tau^2|y|^{N+1}+C_1\tau |y|^{N+1} + C_2\tau^{-N}. 
 \end{align}

 By redefining $\delta'>0$ smaller, if necessary, we have that
 \[
  |e^{i\tau \Theta(s,y)}|\leq C e^{-c\tau \abs{y}^2}
 \]
 for $(s,y)$ in the support of $a$. Recall that our aim is to show that 
 \begin{equation}\label{final_estim}
\norm{\square_g (e^{i\tau \Theta(s,y)} a(s,y))}_{H^k([0,T]\times \Omega)}=O(\tau^{-K}).
 \end{equation}
 Taking $k$ derivatives of $\square_g (e^{i\tau \Theta(s,y)} a(s,y))$ 
 gives 
	\begin{equation}\label{error_for_derivatives}
	 \abs{\nabla^k \square_g (e^{i\tau \Theta(s,y)} a(s,y))}\leq C_3 e^{-\tau c \abs{y}^2}\sum_{l=0}^k \tau^{k-l} (\tau^2 \abs{y}^{N+1-l}+\tau \abs{y}^{N+1-l}+\tau^{-N}).
	\end{equation}
 We calculate the integral of~\eqref{error_for_derivatives} squared using polar coordinates for the $y$-variable and the standard formula 
%
%
 $\int_0^\infty r^le^{-\tau cr^2}dr\sim \tau^{-\frac{l+1}{2}}$ for $l\geq 0$. Note that since the lightlike geodesic $\gamma$ of $(N,g)$ is causal, $[0,T]\times \Omega$ compact and $(N,g)$ globally hyperbolic, $\gamma=\gamma(s)$ will lie in $[0,T]\times \Omega$ only for a finite parameter values. Thus the integration in the coordinate $s$ will be over a finite interval. We obtain the estimate
 \begin{align*}
  \norm{\square_g &(e^{i\tau \Theta(s,y)} a(s,y))}_{H^k(M)}^2\lesssim\sum_{l=0}^k \tau^{2(k-l)}\Big( \int_{0}^re^{-2\tau c r^2}r^{n-1}(\tau^4r^{2N+2-2l}dr +\tau^{-2N})\Big) \\
  &\lesssim \sum_{l=0}^k \tau^{2(k-l)}(\tau^4\tau^{-(n+2N+2-2l)/2}+\tau^{-2N-n/2})\lesssim \tau^{2k+4-(n+2+2N)/2}=\tau^{2k+3-n/2-N}
 \end{align*}
 for $\tau$ and $N$ large enough. (Here we have relaxed the notation and denoted by $A \lesssim B$ if there is a constant $\tilde{C}$ independent of $\tau$ such that $A\leq \tilde{C} B$.) 
 If $p>1$, we may $L^p$-normalize $e^{i\tau \Theta}a$ so that 
 \[
  \int_M |\tau^{\frac{n}{2p}}e^{i\tau \Theta}a|^p\lesssim \tau^{\frac{n}{2}}\int_0^\infty r^{n-1}e^{-\tau c r^2}\lesssim 1,
 \]
  in which case we also have 
 \[
  \int_M \abs{\nabla^l (\tau^{\frac{n}{2p}}e^{i\tau \Theta}a)}^2\lesssim \tau^{\frac{n}{p}}\tau^{2l}\tau^{-\frac{n}{2}}.
 \]
 Therefore, if we define $N=N(n,k,K,p)$ so that it satisfies
 \begin{equation}\label{def:k_K}
  -2K=2k+3-n/2-N+n/p,
 \end{equation}
 we have~\eqref{final_estim}. (If $N$ above is not an integer, we redefine it as $\lfloor N+1\rfloor$.) 
 
By collecting the details of the construction and by defining
\[
 v_\tau=\tau^{\frac{n}{2p}}e^{i\tau \Theta}a
\]
we have:

\begin{proposition}[Gaussian beams]\label{Gaussian_beam_construction}
 Let $(N,g)$ be a globally hyperbolic Lorentzian manifold, $N=\R\times M$ and $\dim(N)=n+1\geq 3$. Let $\Omega$ be a compact submanifold of $M$ with boundary, $\dim(\Omega)=n$. Let $T>0$ and let $\gamma$ be a lightlike geodesic of $(N,g)$.
 Let 
 $k,\s K, \s l \in \N$ and $p\geq 2$. There is $\tau_0\geq 1$ and a family of functions $(v_\tau)\subset C^\infty([0,T]\times \Omega)$ such that for $\tau\geq \tau_0$
 \begin{equation}\label{Hk_and_L4}
\begin{split}
  \norm{\square_gv_\tau}_{H^k( [0,T]\times\Omega)}&=O(\tau^{-K}), \\
\norm{v_\tau}_{L^p( [0,T]\times\Omega)}&=O(1), \\
\norm {v_\tau}_{H^l( [0,T]\times\Omega)}&=O(\tau^{\frac{n}{2p}-\frac{n}{4}+l}) \\
\end{split}
\end{equation}
 as $\tau \to \infty$. 
 The function $v_\tau$ is called a Gaussian beam and it has the form
	\begin{equation}\label{form_of_gb}
	v_\tau=  \tau^{\frac{n}{2p}} e^{i\tau\s \Theta(x)} a(x),
	\end{equation}
	where $\Theta$ is a smooth complex function (independent of $\tau$) on a neighbourhood of $\gamma([0,L])$ satisfying 
	\begin{align}\label{Phi_prop_sec5}
	\begin{split}
	&\Theta(\gamma(s)) = 0, \quad \nabla \Theta(\gamma(s)) = e_1(s), \\ & \mathrm{Im}(\nabla^2 \Theta(\gamma(s))) \geq 0, \quad \mathrm{Im}(\nabla^2 \Theta)(\gamma(s))|_{\dot{\gamma}(s)^{\perp}} > 0.
	\end{split}
	\end{align}
	Here also $a(\gamma(s)) =  (a_0(\gamma(s)) + O(\tau^{-1}))$, where 
	\begin{equation*}
	 a_0(\gamma(s))=\det(Y(s))^{-1/2}
	\end{equation*}
	is nonvanishing and independent of $\tau$. 
	Here $Y(s)$ is a nondegenerate $n\times n$ matrix valued function. 
	The support of $a$ can be taken to be in any small neighbourhood $U$ of $\gamma([0,L])$ chosen beforehand. If $s_0\in [0,L]$ we may arrange so that $a_0(\gamma(s_0))=1$. 
	\end{proposition}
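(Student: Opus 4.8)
The plan is to collect the phase function, amplitude, and error bounds constructed above into the three estimates of \eqref{Hk_and_L4}, and to read off the qualitative properties \eqref{Phi_prop_sec5} directly from the construction. First I would fix Fermi coordinates $(s,y)$ on a tubular neighbourhood $U$ of the compact arc $\gamma([0,L])$ by inverting \eqref{Fermi_coords_def} with the parallel-transported pseudo-orthonormal frame, so that $g|_\gamma$ has the form \eqref{formula_for_g}. Here one uses that $\gamma$ is causal, $[0,T]\times\Omega$ is compact and $(N,g)$ is globally hyperbolic, so $\gamma$ lies in $[0,T]\times\Omega$ only for a finite range of the parameter $s$; hence $L$ can be taken finite and a single chart covers the relevant part of $\Gamma$.

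Next I would record the phase $\Theta=\sum_{j=0}^N\Theta_j$ solving the eikonal equation \eqref{eikonal_equation} to order $N$ on $\Gamma$, with $\Theta_0=0$, $\Theta_1=y_1$ and $\Theta_2=y\cdot H(s)\,y$ given by the Riccati equation with $\mathrm{Im}(H(s))>0$, the higher terms $\Theta_j$ coming from the linear ODEs indicated after \eqref{Nth_ord_sol}. The invariant conditions \eqref{Phi_prop_sec5} follow at once from $\Theta_0=0$, $\Theta_1=y_1$ and $\mathrm{Im}(H)>0$. The amplitude is $a=\chi(|y|/\delta')\sum_{k=0}^N\tau^{-k}b_k$, each $b_k$ solving the transport equations \eqref{transp_ord_0}--\eqref{transp_ord_higher} to order $N$; the leading coefficient is $b_{0,0}(s)=\det(Y(s))^{-1/2}$ by \eqref{eq:Ric_z_1}, which gives the stated $a_0(\gamma(s))$, and rescaling by the constant $\det(Y(s_0))^{1/2}$ arranges $a_0(\gamma(s_0))=1$. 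Since $\Theta$ is smooth and $a$ is compactly supported in $U$ through $\chi$, extending by zero makes $v_\tau\in C^\infty([0,T]\times\Omega)$.

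The quantitative core is the pointwise bound \eqref{full_estimate} for $e^{-i\tau\Theta}\square_g(e^{i\tau\Theta}a)$, together with the Gaussian decay $|e^{i\tau\Theta(s,y)}|\leq Ce^{-c\tau|y|^2}$ on $\supp a$. Differentiating up to $k$ times gives \eqref{error_for_derivatives}; I would square this and integrate in polar coordinates in $y$ over the finite $s$-interval, using $\int_0^\infty r^l e^{-\tau c r^2}\,dr\sim\tau^{-(l+1)/2}$, to obtain $\norm{\square_g(e^{i\tau\Theta}a)}_{H^k}^2\lesssim\tau^{2k+3-n/2-N}$. Multiplying by $\tau^{n/(2p)}$ and fixing $N=N(n,k,K,p)$ by \eqref{def:k_K} (rounded up to an integer) yields $\norm{\square_g v_\tau}_{H^k}=O(\tau^{-K})$; the same Gaussian integrals give $\norm{v_\tau}_{L^p}=O(1)$ and $\norm{v_\tau}_{H^l}=O(\tau^{n/(2p)-n/4+l})$.

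The step requiring the most care is the passage from these coordinate estimates to the intrinsic norms on $[0,T]\times\Omega$: on the fixed compact set $\supp a$ the metric coefficients and the coordinate Jacobian are bounded above and below, so the chart-wise $H^k$, $L^p$ and $H^l$ norms are equivalent to the manifold norms, and the zero extension off $U$ contributes nothing. I would also confirm that one fixed $N$, depending only on $n,k,K,p$ and not on $\tau$, simultaneously makes \eqref{full_estimate} hold to the needed order and controls the $l$ derivatives in the last estimate; because $N$ is $\tau$-independent, the resulting bounds are uniform for $\tau\geq\tau_0$, as claimed.
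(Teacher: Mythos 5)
Your proposal is correct and follows essentially the same route as the paper: Fermi coordinates from the parallel-transported pseudo-orthonormal frame, the WKB ansatz with the eikonal equation solved to order $N$ on $\Gamma$ (with $\Theta_0=0$, $\Theta_1=y_1$, the Riccati equation giving $\mathrm{Im}(H)>0$), the cutoff amplitude solving the transport equations with leading term $\det(Y(s))^{-1/2}$, and then the Gaussian integral estimates in polar coordinates with $N$ fixed by \eqref{def:k_K} to obtain all three bounds in \eqref{Hk_and_L4}. Your additional remarks on the equivalence of chart-wise and intrinsic norms on the compact support of $a$, and on the $\tau$-independence of $N$, are points the paper leaves implicit but they do not change the argument.
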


The Gaussian beams can be corrected to be exact solutions to $\square_g\s  v = 0$.

\begin{corollary}\label{correction_to_solution}
Let us adopt assumptions and notation of Proposition~\ref{Gaussian_beam_construction}. Assume in addition that the lightlike geodesic $\gamma$ does not intersect $\{t=0\}$. Let also $l',K\in \N$. Then there are Gaussian beams $v_\tau$ satisfying the conditions of Proposition~\ref{Gaussian_beam_construction} and functions $r_\tau\in C^\infty([0,T]\times \Omega)$ such that
 \[
  v:=v_\tau+r_\tau
 \]
 is a solution to 
  \begin{equation}
\begin{cases}\label{equation_for_v}
\square_g v =0, &\text{in } [0,T]\times \Omega,\\
 v = v_\tau, &\text{on } [0,T]\times \p \Omega, \\
v\big|_{t=0} = \partial_t v\big|_{t=0} = 0,&\text{in } \Omega.
\end{cases} 
\end{equation}
 The functions $r_\tau$ satisfy
 \begin{equation}\label{estimate_for_the error_r}
  \norm{r_\tau}_{H^{l'}( [0,T]\times\Omega)}=O(\tau^{-K}).
 \end{equation}
\end{corollary}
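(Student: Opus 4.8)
The plan is to correct the approximate Gaussian beam $v_\tau$ into an exact solution of $\square_g v = 0$ by solving a boundary value problem for the error term $r_\tau$. Since $v_\tau$ satisfies $\square_g v_\tau = O_{H^k}(\tau^{-K})$ but is only an approximate solution, I set $v = v_\tau + r_\tau$ and require that $v$ solves \eqref{equation_for_v}. Substituting, this forces $r_\tau$ to solve the linear inhomogeneous problem
\begin{equation*}
\begin{cases}
\square_g r_\tau = -\square_g v_\tau, & \text{in } [0,T]\times \Omega,\\
r_\tau = 0, & \text{on } [0,T]\times \p\Omega,\\
r_\tau\big|_{t=0} = \p_t r_\tau\big|_{t=0} = 0, & \text{in } \Omega.
\end{cases}
\end{equation*}
The right-hand side $F := -\square_g v_\tau$ lies in $E^{s}$-type spaces with norm $O(\tau^{-K})$ by the first estimate in \eqref{Hk_and_L4}, after choosing $k$ large enough relative to $s$ via Remark~\ref{sob:emb_z}.

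First I would verify that the data $(F, f, u_0, u_1) = (-\square_g v_\tau,\, 0,\, 0,\, 0)$ satisfies the compatibility conditions of Proposition~\ref{thm:energy}. This is where the hypothesis that $\gamma$ does not intersect $\{t=0\}$ becomes essential: it guarantees that $v_\tau$, being concentrated in a small neighbourhood of $\gamma([0,L])$, vanishes together with all its derivatives near the Cauchy surface $\{t=0\}$. Consequently $\square_g v_\tau$ is supported away from $\{t=0\}$, so all the boundary-initial compatibility conditions \eqref{eq:compatibility} hold trivially, since $F|_{\{0\}\times\p\Omega} = 0$ and the zero initial/boundary data are compatible to every order. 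Next I would apply Proposition~\ref{thm:energy} directly to obtain a unique solution $r_\tau \in E^{l'+1}$ (taking $s = l'$ in that proposition) together with the energy estimate
\[
\norm{r_\tau}_{E^{l'+1}} \leq C\,\norm{\square_g v_\tau}_{E^{l'}}.
\]

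To conclude, I would convert the right-hand side into the desired decay. By the embedding \eqref{ineq:sob_emb} and the first line of \eqref{Hk_and_L4}, we have $\norm{\square_g v_\tau}_{E^{l'}} \lesssim \norm{\square_g v_\tau}_{H^{k}([0,T]\times\Omega)} = O(\tau^{-K})$ provided $k$ is chosen to exceed $l' + (n+1)/2$; the flexibility of $k$ and $K$ in Proposition~\ref{Gaussian_beam_construction} makes this possible for any prescribed $l'$ and $K$. Combining with a further application of \eqref{ineq:sob_emb} to pass from $E^{l'+1}$ back to $H^{l'}$ yields
\[
\norm{r_\tau}_{H^{l'}([0,T]\times\Omega)} \lesssim \norm{r_\tau}_{E^{l'+1}} \lesssim \norm{\square_g v_\tau}_{H^k([0,T]\times\Omega)} = O(\tau^{-K}),
\]
which is exactly \eqref{estimate_for_the error_r}.

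I expect the main obstacle to be bookkeeping with the Sobolev indices rather than any deep difficulty: one must track carefully how large $k$ must be taken in Proposition~\ref{Gaussian_beam_construction} so that the $H^k$-control of $\square_g v_\tau$ translates into $E^{l'}$-control suitable for the energy estimate, and confirm that the loss of derivatives in the two applications of \eqref{ineq:sob_emb} is harmless because $K$ can be taken arbitrarily large. A secondary point requiring care is ensuring the support condition on $v_\tau$ near $\{t=0\}$ is genuinely guaranteed by the non-intersection hypothesis together with the Gaussian concentration; since $a$ is compactly supported in a neighbourhood $U$ of $\gamma([0,L])$ (as stated at the end of Proposition~\ref{Gaussian_beam_construction}), one simply chooses $U$ small enough to avoid $\{t=0\}$, which is possible precisely because $\gamma$ stays away from that surface.
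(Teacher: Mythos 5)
Your proposal is correct and follows essentially the same route as the paper's proof: solve the inhomogeneous problem $\square_g r_\tau = -\square_g v_\tau$ with vanishing boundary and initial data via Proposition~\ref{thm:energy}, using the support of $v_\tau$ away from $\{t=0\}$ (guaranteed by the non-intersection hypothesis) to make the data admissible, and then convert $\norm{\square_g v_\tau}_{H^k} = O(\tau^{-K})$ into the $H^{l'}$ bound on $r_\tau$ through the embeddings of Remark~\ref{sob:emb_z}. The only cosmetic difference is your choice of energy index ($E^{l'+1}$ with $k > l' + \frac{n+1}{2}$ versus the paper's $E^{l'}$ with $k > l'-1+\frac{n+1}{2}$), which is immaterial since $k$ and $K$ are free parameters.
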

\begin{proof}
By assumption the graph of $\gamma$ has a neighbourhood $U$, which does not intersect a neighbourhood of $\{t=0\}$. Let $v_\tau$ be Gaussian beams, which are supported in $U$ and satisfy the conditions of Proposition~\ref{Gaussian_beam_construction}. 
 By Proposition~\ref{thm:energy}, there exists a solution to
 \begin{equation}
\begin{cases}
\square_g r_\tau =-\square_g v_\tau, &\text{in } [0,T]\times \Omega,\\
 r_\tau = 0, &\text{on } [0,T]\times \p \Omega, \\
r_\tau\big|_{t=0} = \partial_t r_\tau\big|_{t=0} = 0,&\text{in } \Omega.
\end{cases}
\end{equation}
Then $v=v_\tau+r_\tau$ solves~\eqref{equation_for_v}.

By Proposition~\ref{Gaussian_beam_construction} we have that $\norm{\square_g\s  v_\tau}_{H^k([0,T]\times\Omega)} = O(\tau^{-K})$, where $k,K$ can be chosen freely. By Remark \ref{sob:emb_z} for
$k>{l'}-1+\frac{n+1}{2}$ it holds that $H^k([0,T]\times\Omega) \subset E^{{l'}-1}$.
%
%
Choosing $k>{l'}-1+\frac{n+1}{2}$ in Proposition~\ref{thm:energy} and using \eqref{ineq:sob_emb} shows that 
$$
\norm{r_\tau}_{H^{l'}([0,T]\times\Omega)} 
\lesssim \norm{r_\tau}_{E^{l'}}
\lesssim
\norm{\square_g\s v_\tau}_{E^{{l'}-1}} 
\lesssim\norm{\square_g\s  v_\tau}_{H^k([0,T]\times\Omega)}
= O(\tau^{-K})
$$
as claimed.
\end{proof}

\begin{remark}\label{rem:remark_time_inversion}
We shall also need solutions to the wave equation
  \begin{equation}\label{eq:backwards}
\begin{cases}
\square_g v =0, &\text{in } [0,T]\times \Omega,\\
 v = f, &\text{on } [0,T]\times \p \Omega, \\
v\big|_{t=T} = \partial_t v\big|_{t=T} = 0,&\text{in } \Omega
\end{cases}
\end{equation}
where the Cauchy data of $v$ vanishes at the top of the time-cylinder. Solutions to \eqref{eq:backwards} can be found as follows.
Consider the isometry $h$ given by $t\mapsto T-t$
and let $\tilde g = h^*g$.
%
%
Let $\tilde f = f(T-t,x)$ and let $\tilde v$ be the unique solution to
\begin{equation*}
\begin{cases}
\square_{\tilde g}\tilde v =0, &\text{in } [0,T]\times \Omega,\\
 \tilde v = \tilde f, &\text{on } [0,T]\times \p \Omega, \\
\tilde v\big|_{t=0} = \partial_t \tilde v\big|_{t=0} = 0,&\text{in } \Omega.
\end{cases}
\end{equation*}
Because wave operator is invariant under isometries we have
$$
h^*(\square_{\tilde g} \tilde v) = \square_g (h^*\tilde v),
$$ 
whence $v(t,x) := (h^*\tilde v)(t,x)=\tilde v(T-t,x)$ solves equation \eqref{eq:backwards}.
\end{remark}

We next vary the initial point and direction of a lightlike geodesic to construct a family of Gaussian beams. The Gaussian beams will be constructed so that the implied constants of the family of Gaussian beams are uniformly bounded. This uniformity of constants is essential when proving stability estimates. We mention here a similar consideration in the Riemannian setting~\cite[Section 4.1]{FKLLS20}. 

To obtain such Gaussian beams, we start with a lemma. We define the set $\PSO(N)$ of pseudo-orthonormal frames as 
 \begin{align*}
  \PSO(N):=\{ (e_0,\ldots,e_{n})\in  (TN)^{n+1}\mid&  \ g(e_0,e_0)=0, \ g(e_1,e_1)=0, \ g(e_0,e_1)=-2 \\ 
  & \quad \qquad g(e_j, e_k)=\delta_{jk},\text{ for } j,k=2,3,\ldots,n \}.
\end{align*}
The lemma especially says that on a neighbourhood of any point of $N$ there is local pseudo-orthonormal frame.
\begin{lemma}\label{section_of_PSO}
 Let $z_0\in N$ and let $V_0\in T_{z_0}N$ be a lightlike vector. The set of pseudo-orthonormal frames admits a local section $E:\mathcal{U}\to \PSO(N)$ such that the first component $(E(z_0))_0$ of $E$ at $z_0$ is $V_0$. Here $\mathcal{U}$ is an open neighbourhood of $z_0$.
\end{lemma}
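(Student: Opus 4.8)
The plan is to produce the frame by hand, one vector at a time, using the globally defined timelike direction coming from the product structure of $N$. Throughout, set $U=\beta^{-1/2}\p_t$, which by \eqref{eq:g} is a smooth unit timelike field, $g(U,U)=-1$, defined on all of $N$. The key reduction is that everything rests on first extending the prescribed null vector $V_0$ to a smooth lightlike vector field $e_0$ with $e_0(z_0)=V_0$; once this is done, the remaining frame vectors are obtained by algebraic operations that depend smoothly on $e_0$ and $U$.

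To build $e_0$, I would decompose with respect to $U$. Writing $\alpha=-g(V_0,U(z_0))$ and $Y_0=V_0+g(V_0,U(z_0))\,U(z_0)$, we have $V_0=\alpha\,U(z_0)+Y_0$ with $g(Y_0,U(z_0))=0$, and the null condition $g(V_0,V_0)=0$ forces $\alpha^2=g(Y_0,Y_0)=:|Y_0|^2>0$. Extend $Y_0$ to an arbitrary smooth field $\hat Y$ near $z_0$ and project it to the orthogonal complement of $U$ by $Y:=\hat Y+g(\hat Y,U)\,U$; then $g(Y,U)=0$, $Y(z_0)=Y_0$, and $|Y|:=\sqrt{g(Y,Y)}>0$ on a neighbourhood $\mathcal U$ of $z_0$ by continuity. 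Setting $e_0:=\sgn(\alpha)\,|Y|\,U+Y$ gives a smooth field with $g(e_0,e_0)=-|Y|^2+|Y|^2=0$ and $e_0(z_0)=V_0$. The conjugate null field $e_1:=|Y|^{-2}\bigl(\sgn(\alpha)\,|Y|\,U-Y\bigr)$ is then smooth and lightlike, and a direct computation gives $g(e_0,e_1)=-2$. Note that $e_0,e_1\in\mathrm{span}(U,Y)$, a nondegenerate timelike $2$-plane $W$ of signature $(1,1)$.

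It remains to choose a smooth orthonormal frame $e_2,\dots,e_n$ of the rank $(n-1)$ orthogonal complement $W^\perp$, which is positive definite and varies smoothly with the base point. I would obtain this by projecting the coordinate fields $\p_{x^a}$ onto $W^\perp$ (the orthogonal projection onto the nondegenerate plane $W$ is smooth), selecting $n-1$ of these projections that are linearly independent at $z_0$ and hence on a smaller neighbourhood, and applying Gram--Schmidt. Positive definiteness of $g|_{W^\perp}$ guarantees that the process is smooth and produces $g(e_j,e_k)=\delta_{jk}$ together with $g(e_0,e_j)=g(e_1,e_j)=0$ for $j,k\ge 2$. The resulting $E=(e_0,\dots,e_n)$ is then the desired smooth section of $\PSO(N)$ on $\mathcal U$ with $(E(z_0))_0=V_0$.

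I expect the only genuine difficulty to be the construction of $e_0$: the lightlike vectors do not form a linear subspace, so a naive linear extension of $V_0$ need not remain null. The device of splitting off the timelike reference direction $U$ and then restoring the null constraint through the relation $\alpha=\sgn(\alpha)\,|Y|$ is exactly what circumvents this; after that step, forming $e_1$ and the spacelike vectors is routine smooth linear algebra.
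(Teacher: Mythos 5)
Your proof is correct, but it follows a genuinely different route from the paper. The paper's proof starts from the existence of a pseudo-orthonormal frame at the single point $z_0$ (cited from [FO19]) and then extends it to a smooth local section by applying the implicit function theorem to a map $F(x,E)$ whose zero set encodes the frame conditions, with extra off-diagonal conditions added to remove the non-uniqueness of such frames. You instead construct the section explicitly: the globally defined unit timelike field $U=\beta^{-1/2}\p_t$ coming from the splitting \eqref{eq:g} lets you split $V_0=\alpha U(z_0)+Y_0$, restore the null constraint via $\alpha=\sgn(\alpha)|Y_0|$ to get a smooth lightlike extension $e_0$, produce the conjugate null field $e_1$ by an explicit formula (your computation $g(e_0,e_1)=-2$ is right), and finish with Gram--Schmidt on the positive definite complement $W^\perp$. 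This is precisely the ``generalized Gram--Schmidt'' argument that the authors themselves mention, in the remark following their proof, as a likely alternative; your write-up confirms it works. What each approach buys: yours is elementary and self-contained (no appeal to [FO19] for the pointwise frame, no implicit function theorem), at the mild cost of invoking the product structure of $N$ to get the reference field $U$ (any local timelike field would do, so this is not a real restriction); the paper's approach is more abstract but handles the frame-ambiguity issue systematically through the asymmetric conditions imposed for $j<k$, and does not single out a preferred timelike direction. A minor point worth making explicit in your argument: $|Y_0|>0$ requires $V_0\neq 0$, which is implicit in calling $V_0$ lightlike; and your frame in fact satisfies the additional orthogonality relations $g(e_0,e_j)=g(e_1,e_j)=0$ for $j\geq 2$, which the displayed definition of $\PSO(N)$ does not list but which are what the Fermi-coordinate construction actually uses, so this is a feature rather than a defect.
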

\begin{proof}
 The existence of a pseudo-orthonormal frame $e=(e_0,e_1,\ldots,e_n)$ of the tangent space $T_{z_0}N$ over the single point $z_0$ with $e_0=V_0$ was shown in~\cite{FO19}. By using local coordinates $(x^k)$ on a neighbourhood $\mathcal{U}\subset M$ of $z_0$ let us define the mapping
\[
 F(x,E):x(\mathcal{U})\times \R^{(n+1)\times (n+1)} \to \R^{(n+1)\times (n+1)}, 
\]
where $x(\mathcal{U})\subset \R^{n+1}$, by the conditions 
\begin{align*}
 F(x,E)_{jk}&=g_x(E_j,E_k)-g_{z_0}(e_j,e_k) \s\text{ if } j\geq k, \\
 F(x,E)_{jk}&=g_x(e_j,E_k)-g_{z_0}(e_j,e_k) \,\,\text{ if } j<k.
\end{align*}
Here $E_j$ is the $j^{\textrm{th}}$ column vector of the $(n+1)\times (n+1)$ matrix $E$. Here also $g_x(E_j,E_k)=\langle E_j,g(x)E_k\rangle$ and $g_x(e_j,E_k)=\langle e_j,g(x)E_k\rangle$, where $g(x)$ is the coordinate representation matrix of $g$ in the coordinates $(x^k)$. The perhaps ad hoc looking conditions for $F(x,E)_{jk}$ for $j<k$ are related to the fact local sections $E$ of $\PSO(M)$ satisfying $(E(z_0))_0=V_0$ (should they exist) are not unique without additional conditions. The conditions for $F(x,E)_{jk}$ for $j<k$ removes this ambiguity.

We apply the implicit function theorem (see e.g.~\cite[Theorem 10.6]{RR06}) to show that there is a smooth mapping $x\mapsto E(x)$ such that $F(x,E(x))=0$. In this case $E$ is a smooth section of $\PSO(N)$ by the conditions for $F(x,E)_{jk}$ for $j\geq k$ and by the symmetry of $g$. To apply the implicit function theorem, note that $F(z_0,e)=0$ and that
\begin{align}
 (D_EF|_{x=z_0, E=e}(v))_{jk}&=g_{z_0}(v_j,e_k)+g_{z_0}(e_j,v_k) \quad \text{ if } j\geq k, \label{jgeqk}\\
 (D_EF|_{x=z_0, E=e}(v))_{jk}&=g_{z_0}(e_j,v_k) \qquad \qquad \qquad\ \, \text{ if } j<k, \label{jleqk}
\end{align}
%
%
where $j,k=0,1,\ldots, n$ and $v=(v_0,v_1,\ldots,v_n)\in \R^{(n+1)\times (n+1)}$. Assume that $(D_EF|_{x=z, E=e}(v))=0$. Since $g$ is symmetric, the condition~\eqref{jleqk} implies that $g_{z_0}(v_j,e_k)=0$ for $j>k$. Substituting this into~\eqref{jgeqk} then implies that $g_{z_0}(e_j,v_k)=0$ for $j\geq k$. Thus we actually have that $g_{z_0}(e_j,v_k)=0$ for all $j,k=0,1,\ldots n$. Since $g$ is non-degenerate and $e$ is a frame, it follows that each $v_k\in \R^{n+1}$ is the $0$ vector of $\R^{n+1}$. Thus $D_EF|_{x=z_0, E=e}$ is injective, and also surjective by dimensionality. Thus, by the implicit function theorem, and by redefining $\mathcal{U}$ smaller if necessary, there is a smooth mapping $E:\mathcal{U} \to \PSO(N)$. This is our desired section. 
\end{proof}
We remark that it is likely that another proof of the above lemma can be obtained by generalizing the Gram-Schmidt procedure to the current situation. We also mention the similar construction~\cite[Lemma 6.1]{FKLLS20} in the Riemannian setting. 

In the next result $|V_0-\dot\gamma_x(s_0)|$ is defined by using local coordinates.

\begin{corollary}\label{uniform_family_of_Gaussian_beams}
 Let $\gamma$ be a lightlike geodesic of $(N,g)$. Assume as in Proposition~\ref{Gaussian_beam_construction} and adopt its notation. Let $s_0$ be in the domain of $\gamma$ and let us denote $\gamma(s_0)=z_0$ and $\dot\gamma(t_0)=V_0$. Let also $\delta>0$. Then there is $\tau_0\geq 1$ and a neighbourhood $U$ of $z_0$ and a family of Gaussian beams 
 \[
  v_\tau(x,\ccdot) 
 \]
 solving $\square_g v_\tau(x,\ccdot)=0$ in $[0,T]\times \Omega$ (including the correction term) parametrized by $x\in \mathcal{U}$.  Here $`` \ccdot " $ refers to points in $M$ and $\tau \geq \tau_0$. The geodesics $\gamma_x$ corresponding to the Gaussian beams $v_\tau(x,\ccdot)$ satisfy $|V_0-\dot \gamma_x(s_0)|\leq \delta$ and the implied constants of $v_\tau(x,\ccdot)$ in Proposition \ref{Gaussian_beam_construction} and Corollary \ref{correction_to_solution} are uniformly bounded in $x$.
%
%
\end{corollary}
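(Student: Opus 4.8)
The plan is to construct the family of Gaussian beams by letting both the initial point and initial direction of the geodesic vary smoothly, and to verify that all the constants appearing in the construction of Proposition~\ref{Gaussian_beam_construction} depend continuously—and hence, on a compact parameter set, uniformly—on these initial data. First I would invoke Lemma~\ref{section_of_PSO} to obtain a smooth local section $E:\mathcal{U}\to\PSO(N)$ of the pseudo-orthonormal frame bundle on a neighbourhood $\mathcal{U}$ of $z_0$, normalized so that $(E(z_0))_0=V_0$. For each $x\in\mathcal{U}$ this provides a lightlike initial direction $e_0(x)=(E(x))_0$, and I define $\gamma_x$ to be the geodesic $\gamma_{x,e_0(x)}(s)=\exp_x(s\,e_0(x))$. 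Since $\exp$ depends smoothly on its base point and tangent vector, and $E$ is smooth, the map $x\mapsto \dot\gamma_x(s_0)$ is continuous; shrinking $\mathcal{U}$ if necessary we arrange $|V_0-\dot\gamma_x(s_0)|\leq\delta$ for all $x\in\mathcal{U}$.

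Next I would run the Gaussian beam construction of Proposition~\ref{Gaussian_beam_construction} with the geodesic $\gamma_x$ and the frame $E(x)$ as input, producing the phase $\Theta^x$, amplitude $a^x$, and correction term $r_\tau^x$ from Corollary~\ref{correction_to_solution}, thereby obtaining $v_\tau(x,\cdot)$. The point is that every object in that construction is obtained by solving ODEs (the eikonal/Riccati equation for $H(s)$, the transport equations for the $b_{k,j}$, the matrix ODE for $Y(s)$) whose coefficients are the metric $g$ and its derivatives evaluated along $\gamma_x$, together with the initial data fixed by $E(x)$. By smooth dependence of solutions of ODEs on parameters and initial conditions, each of these solutions depends continuously on $x\in\mathcal{U}$. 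In particular the constants $C_0,C_1,C_2,C_3$ controlling the eikonal and transport errors in \eqref{eikonal_approximation_error}, \eqref{transport_approximation_error} and \eqref{error_for_derivatives}, the concentration constant $c$ in $|e^{i\tau\Theta^x}|\leq Ce^{-c\tau|y|^2}$, and the threshold $\tau_0$, are all continuous functions of $x$.

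The key step is then a compactness argument. After shrinking $\mathcal{U}$ to have compact closure $\overline{\mathcal{U}}$, each of the finitely many constants above is a continuous function on the compact set $\overline{\mathcal{U}}$, hence bounded and bounded away from zero where needed (e.g. the concentration constant $c$ stays bounded below, and $a_0^x(\gamma_x(s_0))=\det(Y^x(s_0))^{-1/2}$ stays nonvanishing since $Y^x$ remains nondegenerate by continuity). Taking the supremum of $\tau_0(x)$ and the relevant error constants over $\overline{\mathcal{U}}$ yields a single $\tau_0\geq 1$ and uniform implied constants in the estimates \eqref{Hk_and_L4} and \eqref{estimate_for_the error_r}, valid simultaneously for all $x\in\mathcal{U}$. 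One technical point requiring care is that the Fermi coordinate neighbourhood and the cutoff radius $\delta'$ in \eqref{def:a_k} must be chosen uniformly; I would handle this by noting that the Fermi coordinate map \eqref{Fermi_coords_def} depends smoothly on $(x,E(x))$ and shrinking $\mathcal{U}$ so that a common $\delta'$ and common support neighbourhood $U$ work for all $x$, using again that $[0,T]\times\Omega$ is compact so the geodesics $\gamma_x$ remain in it only for parameters in a uniformly bounded $s$-interval.

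The main obstacle I anticipate is bookkeeping the uniformity through the recursive solution of the transport equations, since the number $N=N(n,k,K,p)$ of terms and the polynomial degrees are fixed but the solutions $b_{k,j}^x$ are built up recursively; I would argue that at each fixed recursion level the solution depends continuously on $x$ (being the solution of a linear ODE with $x$-dependent coefficients and $x$-dependent forcing inherited from the previous level), so by finite induction over the fixed ranges $0\leq k\leq N$, $0\leq j\leq N$ the continuity—and hence uniform boundedness over $\overline{\mathcal{U}}$—propagates to the full amplitude $a^x$. The remaining estimates for $r_\tau^x$ follow from Corollary~\ref{correction_to_solution} together with the uniform bound on $\|\square_g v_\tau(x,\cdot)\|_{H^k}$, using that the constant in Proposition~\ref{thm:energy} is a fixed geometric constant independent of $x$.
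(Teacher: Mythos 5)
Your proposal is correct and follows essentially the same route as the paper: both use Lemma~\ref{section_of_PSO} to produce a smooth frame section with $(E(z_0))_0=V_0$, run the Gaussian beam construction along the resulting family of geodesics $\gamma_{x,V(x)}$ with smoothly varying Fermi coordinates, appeal to smooth dependence of the ODE solutions (Riccati, transport) on the parameter $x$ to get uniform constants, and finish with Corollary~\ref{correction_to_solution} for uniformly controlled correction terms. Your explicit compactness argument and the induction over transport-equation levels simply flesh out what the paper states more tersely as the beams differing ``boundedly and uniformly in $x$'' from the reference beam.
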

\begin{proof}
 The proof is based on inspecting the construction of the Gaussian beams at the beginning of this section that lead to Proposition~\ref{Gaussian_beam_construction}, and by using Corollary~\ref{correction_to_solution} and Lemma~\ref{section_of_PSO}.
 
 Let $v_\tau$ be a Gaussian beam without the error term corresponding to the geodesic $\gamma$  as in Proposition~\ref{Gaussian_beam_construction}. Note that this implies that we have chosen initial data for the certain ODEs used in the construction (such as the Riccati equation). Let us record these initial data and also define 
 \[
  v_\tau(z_0,\ccdot):=v_\tau(\ccdot).
 \]

 By Lemma~\ref{section_of_PSO} there is a local section $E$ of $\PSO(M)$ such that $(E(z_0))_0=V_0$. We define a local vector field $V$ by 
 \[
  V(x)=(E(x))_0.
 \]
 By redefining the domain of $E$ smaller, if necessary, we have that $|V(x)-\dot\gamma(0)|<\delta$. The section $E$ also defines a family of Fermi-coordinates by the formula~\eqref{Fermi_coords_def} parametrized by $x$. Since $E$ is smooth, the corresponding Fermi-coordinates depend smoothly on $x$ (say in any $C^k$-norm in the Frech\'et sense). Also the domain of the Fermi-coordinates is uniformly bounded by the same reason.
%
%

 Let $x\in \mathcal{U}$ and let us pass to the Fermi-coordinates determined by $E(x)$. We construct a Gaussian beam 
 \[
  v_\tau(x,\ccdot)
 \]
 with the following properties: (a) It corresponds to the geodesic $\gamma_{x,V(x)}$ with inital data $x\in M$ and $V(x)\in T_xM$. (b) It is constructed by the exact same method described in the beginning of this section by using the same initial data for the corresponding ODEs that we used for $v_\tau$. Since the coefficients of the ODEs are determined by the smooth metric $g$ and the initial data are the same as for $v_\tau$, the Gaussian beam $v_\tau(x, \ccdot)$ differ boundedly and uniformly in $x$ from $v_\tau(\ccdot)$ (say in any $C^k(M)$ norm.) Especially the implied constants in Proposition \ref{Gaussian_beam_construction} are uniform in $x$.
 
 Finally, we use Corollary~\ref{correction_to_solution} to find correction terms for $v_\tau(x, \ccdot)$ such that the implied constants in~\eqref{estimate_for_the error_r} are uniform in $x$. This concludes the proof. 
\end{proof}

\section{Separation of points}\label{sec:separation_of_points}
In this section $(N,g)$ is a globally hyperbolic smooth Lorentzian manifold without boundary.
The length of a piecewise smooth causal path $\alpha : [a,b] \rightarrow N$ is defined as
\begin{equation}\label{length_def}
l(\alpha) := \sum_{j=0}^{k-1} \int_{a_j}^{a_{j+1}} \sqrt{ -g( \dot\alpha(s), \dot\alpha(s)) } ds,
\end{equation}
where $a_0<a_1< \cdots<a_{k-1} < a_k$ are chosen such that $\alpha$ is smooth on each interval $(a_j,a_{j+1})$ for $j=0,\dots,k-1$. The time separation function, see e.g.~\cite{One83}, is denoted by $\tau : N \times N \rightarrow [0, \infty)$ and defined as 
\begin{equation}\label{def:time_separation}
\tau(x,y) :=\begin{cases}  \sup l(\alpha) , &  y\in J^+(x) \\
0, & y\notin J^+(x),
\end{cases}
\end{equation}
where the supremum is taken over all piecewise smooth future-directed causal curves $\alpha : [0,1] \rightarrow N$ that satisfy $\alpha(0)  = x$ and $\alpha(1) = y$. By~\cite[Ch. 14, Lemma 16]{One83}, we have that
\begin{equation}\label{timeseparation}
 \tau(x,z)>0 \text{ if and only if } x\ll z.
\end{equation}


As before, we view $N$ as the product manifold $\R\times M$ and assume that $\Omega\subset M$, $\dim(\Omega)=\dim(M)$, is a smooth compact manifold with boundary. For $T>0$ we let $\Sigma^T=\Sigma$ denote the lateral boundary of $[0,T]\times \Omega$. Let us consider $x\in I^+(\Sigma)\cap I^-(\Sigma)$.
We say that $\gamma_1:[0,1]\to [0,T]\times \Omega$ is a future-directed \emph{optimal geodesic} connecting $\Sigma$ to $x$ if there is 
\[
  z_1\in J^-(x)\cap \Sigma \text{ such that } \gamma_1(0)=z_1, \ \gamma_1(1)=x \text{ and } \tau(z_1,x)=0.
\]
Similarly, we say that $\gamma_2:[0,1]\to [0,T]\times \Omega$ is a past-directed optimal geodesic connecting $\Sigma$ to $x$ if there is
\[
   z_2\in J^+(x)\cap \Sigma \text{ such that } \gamma_2(0)=z_2, \ \gamma_2(1)=x \text{ and } \tau(x,z_2)=0.
\]
We always understand optimal geodesics as their maximal extensions. Note that by definition future/past-directed optimal geodesics are always lightlike. The next lemma says that such optimal geodesics always exist. We assume the notations and assumptions used earlier in this Section~\ref{sec:separation_of_points}. The situation of the lemma is illustrated in Figure \ref{pic:Special_optimal_geodesics}, which can be found from Section \ref{sec:Lorentzian_geom_tools} in the Introduction.

In the lemma we consider intersection times of geodesics and $\Sigma$. This means that if the geodesic is denoted by $\gamma:[0,1]\to [0,T]\times \Omega$, then the first intersection time is the smallest $s_0\in [0,1]$ such that $\gamma(s_0)\in \Sigma$. Typically $s_0$ will be $0$. That the intersection in the lemma is transverse means that $\dot\gamma(s_0)$ is transversal to the tangent space $T_{\gamma(s_0)}\Sigma$. We do not claim anything about possible other intersections of $\gamma$ and $\Sigma$.

\begin{lemma}[Boundary optimal geodesics]\label{optimal_geo}
 Let $(N,g)$ be globally hyperbolic, $N=\R\times M$. If $x\in I^+(\Sigma)\cap ([0,T]\times \Omega)$, there exists a future-directed optimal geodesic $\gamma:[0,1]\to[0,T]\times \Omega$ from $\Sigma$ to $x$ 
 and the first intersection of $\gamma$ and $\Sigma$ is transverse.
Similarly, if $x\in I^-(\Sigma)\cap ([0,T]\times \Omega)$, there exists a past-directed optimal geodesic $\gamma:[0,1]\to[0,T]\times \Omega$ from $\Sigma$ to $x$ 
and the first intersection of $\gamma$ and $\Sigma$ is transverse.
\end{lemma}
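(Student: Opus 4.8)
The plan is to prove the future-directed case $x\in I^+(\Sigma)$ in full and then obtain the past-directed case by reversing the time orientation, equivalently by applying the isometry $t\mapsto T-t$ of Remark~\ref{rem:remark_time_inversion}: this interchanges $J^+$ and $J^-$ and turns a future-directed boundary optimal geodesic into a past-directed one, so no separate argument is needed.

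First I would produce the endpoint $z_1\in\Sigma$ together with the optimal geodesic. Since $x\in I^+(\Sigma)$ there is $z_0\in\Sigma$ with $z_0\ll x$, so $S:=J^-(x)\cap\Sigma$ is nonempty; it is compact because $J^-(x)$ is closed (global hyperbolicity) and $\Sigma=[0,T]\times\p\Omega$ is compact. Let $t:N\to\R$ be the Cauchy time coordinate and let $z_1\in S$ maximize $t|_S$, with $t_1:=t(z_1)$. As $I^-(x)\cap\Sigma$ is a nonempty open subset of $\Sigma$ it contains points with $t>0$, so $t_1>0$; and since $z_1\le x$ with $z_1\neq x$ one has $t_1<t(x)\le T$. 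Hence $z_1$ lies in the interior $(0,T)\times\p\Omega$ of $\Sigma$. If $z_1$ were in $I^-(x)$, openness would let me raise the time coordinate slightly within $\Sigma$ while staying in $I^-(x)\subset S$, contradicting maximality; therefore $z_1\in\p J^-(x)$, and by \eqref{timeseparation} this gives $\tau(z_1,x)=0$. Standard causality theory on a globally hyperbolic manifold then yields a future-directed lightlike geodesic $\gamma$ from $z_1$ to $x$ realizing the causal relation and lying in $\p J^-(x)$; this is the required optimal geodesic, and its time coordinate runs in $[t_1,t(x)]\subset[0,T]$.

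The main work is transversality of the first intersection, and the decisive geometric fact is that a null hypersurface and a timelike hypersurface can never be tangent. Indeed $T_{z_1}\Sigma$ is a \emph{nondegenerate} (Lorentzian) hyperplane, because the metric induced on $\Sigma$ is $-\beta\,dt^2+h|_{\p\Omega}$, whereas near $z_1$ the causal boundary $\mathcal N:=\p J^-(x)$ is a null hypersurface whose tangent hyperplane is \emph{degenerate}; two distinct hyperplanes in $T_{z_1}N$ automatically span $T_{z_1}N$, so $\mathcal N$ and $\Sigma$ meet transversally and $C:=\mathcal N\cap\Sigma$ is a smooth codimension-two submanifold near $z_1$ with $T_{z_1}C=T_{z_1}\mathcal N\cap T_{z_1}\Sigma$. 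Because $C\subset S$ and $z_1$ is an interior maximum of $t|_S$, it is also an interior maximum of $t|_C$, so $dt$ vanishes on $T_{z_1}C$. Now suppose $\gamma$ were tangent to $\Sigma$ at $z_1$, i.e.\ $\dot\gamma(0)\in T_{z_1}\Sigma$. Since $\gamma$ is a generator of $\mathcal N$ we also have $\dot\gamma(0)\in T_{z_1}\mathcal N$, whence $\dot\gamma(0)\in T_{z_1}C$ and $dt(\dot\gamma(0))=0$. This contradicts $dt(\dot\gamma(0))>0$, which holds because $\dot\gamma(0)$ is future-directed causal and $t$ is a Cauchy time function. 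Hence $\gamma$ meets $\Sigma$ transversally at $z_1$, and the argument uses nothing about convexity of $\Sigma$.

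The point needing the most care, and the main obstacle, is the regularity of $\mathcal N=\p J^-(x)$ at $z_1$: the argument treats $\mathcal N$ as a smooth null hypersurface, which is valid only away from the past cut locus of $x$ (conjugate points, or points joined to $x$ by several generators), where $\mathcal N$ is merely Lipschitz. I would handle this by taking $z_1$ strictly before the cut point along $\gamma$, where $\mathcal N$ is genuinely smooth, or by approximating $\mathcal N$ near $z_1$ by smooth null hypersurfaces and passing to the limit, using that the crucial transversality of null and timelike tangent hyperplanes is an open and robust condition. The potential degeneracy at temporal edges $t\in\{0,T\}$ does not arise here, since the strict inequalities $0<t_1<t(x)\le T$ place $z_1$ in the interior of $\Sigma$.
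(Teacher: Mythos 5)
The decisive problem is in your transversality argument, and you have flagged it yourself without resolving it: the whole argument hinges on $\mathcal N=\p J^-(x)$ being a \emph{smooth} null hypersurface at $z_1$, with a tangent hyperplane containing $\dot\gamma(0)$, and this can fail precisely at $z_1$. Since $\tau(z_1,x)=0$, the point $z_1$ lies at or before the cut point of the generator $\gamma$, and nothing in the construction prevents it from being the cut point itself: $z_1$ may be a crossing point of two optimal generators from $x$ (easy to arrange when $\Omega$ is non-convex, e.g.\ an annulus with $x$ placed symmetrically), or a point conjugate to $x$ along $\gamma$. In either case $\p J^-(x)$ is merely Lipschitz at $z_1$, so $T_{z_1}\mathcal N$, the smooth intersection $C=\mathcal N\cap\Sigma$, and the identity $T_{z_1}C=T_{z_1}\mathcal N\cap T_{z_1}\Sigma$ do not exist. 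Your first proposed fix --- ``take $z_1$ strictly before the cut point along $\gamma$'' --- is not available: $z_1$ is not a free parameter, it is pinned down as the maximizer of $t|_S$ and as the endpoint of $\gamma$ on $\Sigma$, and your first-order condition $dt|_{T_{z_1}C}=0$ is only usable at that very point. The second fix (approximation by smooth null hypersurfaces) is where the real work would lie and is not carried out; in the crossing case one could replace $\mathcal N$ by the smooth local sheet of the past light cone of $x$ along $\gamma$, which is still contained in $J^-(x)$ so the maximum argument survives, but in the conjugate-point case that sheet degenerates at $z_1$ and you offer no substitute. The paper's proof avoids the null boundary entirely: assuming tangency, it uses the product form \eqref{eq:g} of $g$ to show that $\dot\gamma(0)$ cannot be $g$-normal to the spacelike codimension-two submanifold $\Sigma_{t_{\mathrm{sup}}}=\Sigma\cap\{t=t_{\mathrm{sup}}\}$ (tangency kills the $\nu$-component, so normality would force $\dot\gamma(0)\parallel\p_t$, impossible for a lightlike vector), and then invokes \cite[Ch.~10, Lemma~50]{One83} to produce a timelike curve from $x$ to $\Sigma_{t_{\mathrm{sup}}}$, contradicting the maximality of $t_{\mathrm{sup}}$. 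That lemma requires no regularity of $\p J^-(x)$ and handles crossing and conjugate points automatically; some replacement of this kind is what your argument needs.

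Your existence step is essentially the paper's (the paper maximizes the time coordinate over points $\tilde z\in\Sigma$ with $\tau(\tilde z,x)>0$ and extracts $z_{\mathrm{sup}}$ by compactness of $\Sigma$; maximizing $t$ over the compact set $S=J^-(x)\cap\Sigma$ is the same idea, and your deduction of $\tau(z_1,x)=0$ via \eqref{timeseparation} is correct), but it has one omission: you never verify that the null geodesic from $z_1$ to $x$ stays inside $[0,T]\times\Omega$. Confinement to the time slab $[0,T]\times M$ is not enough; since $\Omega$ is not assumed convex, the geodesic could exit through the lateral boundary $\Sigma$ and re-enter, in which case it is not a map into $[0,T]\times\Omega$ as the lemma requires, and its first intersection with $\Sigma$ would not be at $z_1$. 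The paper devotes a separate argument to this. In your setup the repair is short --- an exit point $\gamma(s_0)\in\Sigma$ with $s_0>0$ lies on a causal curve to $x$, hence belongs to $S$, and satisfies $t(\gamma(s_0))>t_1$ because $t$ is strictly increasing along future-directed causal curves, contradicting maximality of $t|_S$ --- but as written the step is missing.
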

\begin{proof}
\textbf{Existence.}
 Let us first consider the claim about the existence of future-directed optimal geodesic.  For this, let us define
\begin{equation}\label{tsup}
 t_{\textrm{sup}}=\sup\s\{\tilde{t}\in [0,T]\mid \text{ there is } \tilde{z}\in \Sigma \text{ such that } t(\tilde z)=\tilde{t} \text{ and } \tau(\tilde z,x)>0\}.
\end{equation}
Here $\tau$ is defined on $N\times N$. 
The number $t_{\textrm{sup}}$ will be the time coordinate of $z_{\textrm{sup}}$ in  Figure~\ref{pic:Special_optimal_geodesics}.
By assumption $x\in I^+(\Sigma)$ and thus there is $\tilde z\in \Sigma$ such that $x\in I^+(\tilde z)$ with $\tau(\tilde z,x)>0$ by~\eqref{timeseparation}. We also have $t(\tilde z)\in [0,T]$. Consequently the supremum in~\eqref{tsup} exists and $t_{\textrm{sup}}\in [0,T]$. Let $z_k\in \Sigma$ and $t(z_k)=t_k$ be such that $t_k\to t_{\textrm{sup}}$ as $k\to \infty$. Since $z_k\in \Sigma$ and $\Sigma$ is compact, we may pass to a subsequence so that $z_k\to z_{\textrm{sup}}\in \Sigma$. We also have $t(z_{\textrm{sup}})=t_{\textrm{sup}}$ by continuity of the time function $t$.



We claim that $\tau(z_{\textrm{sup}},x)=0$. 
We argue by contradiction and assume the opposite that $\tau(z_{\textrm{sup}},x)>0$. Then there is a timelike future-directed path $\eta:[0,1]\to N$ connecting $z_{\textrm{sup}}$ to $x$  by~\eqref{timeseparation}. 
Since $\eta$ is timelike and $I^-(x)$ is open, we may deform $\eta$ slightly on a neighbourhood of $z_{\textrm{sup}}$ to a future-directed timelike path that connects $z'\in \Sigma$ to $x$ so that $t(z')>t_{\textrm{sup}}$. Thus $x\in I^+(z')$ and we still have $\tau(z',x)>0$ by~\eqref{timeseparation}. This is a contradiction to the definition of $t_{\textrm{sup}}$. We conclude that $\tau(z,x)=0$. Since $(N,g)$ is globally hyperbolic, there is a future-directed lightlike geodesic $\gamma_1:[0,1]\to N$ from $z_{\textrm{sup}}$ to $x$ of length $\tau(z_{\textrm{sup}},x)=0$, see~\cite[Ch. 14, Prop. 19]{One83}. 

We note that $\gamma_1$ is actually a path $[0,1]\to [0,T]\times \Omega$. Indeed, if $\gamma_1$ meets the complement of $[0,T]\times \Omega$, then $\gamma_1$ necessarily intersects $\Sigma$ at a parameter time $s_0<1$ before it meets $z_{\textrm{sup}}$ at the parameter time $1$. Since $\gamma_1$ is causal, it follows that $t(\gamma_1(s_0))>t_{\textrm{sup}}=t(z_{\textrm{sup}})$, where  $\gamma_1(s_0)\in \Sigma$. Since $\Sigma$ is timelike, there is point $\hat z\in \Sigma$ with $t_{\textrm{sup}}<t(\hat z)<t(\gamma_1(s_0))$ and a future-directed timelike path $\hat\eta$ connecting $\hat z$ to $\gamma_1(s_0)$. Thus, a path achieved by composing the paths $\hat \eta$ and $\gamma_1$ has positive length by the definition \eqref{length_def}. It follows that $\tau(\hat z,x)>0$ by the definition \eqref{def:time_separation}.   
%
We have arrived to a contradiction with the definition of $z_{\textrm{sup}}$, since  $t(\hat z)>t_{\textrm{sup}}$.

\textbf{Transversality:}
We next show that the optimal geodesic $\gamma$ constructed above intersects the lateral boundary $\Sigma$ transversally.
Assume that $\gamma$ is parametrised so that $\gamma(0)=z_\mathrm{sup}$.
Let $S_{t_\mathrm{sup}} = \{t_\mathrm{sup}\}\times M$ be the Cauchy level surface at $t=t_\mathrm{sup}$.
%
Let $T=(T_1,\ldots,T_{n-1})$ be a basis for the tangent space $T_{z_\mathrm{sup}}\p\Omega$. Then $\{T,\nu\}$, where  $\nu$ is the normal vector to $\p\Omega$ at $z_\mathrm{sup}$ in $S_{t_\mathrm{sup}}$, is a basis for $T_{z_\mathrm{sup}} S_{t_\mathrm{sup}}$.
%
Consequently, the tangent space $T_{z_\mathrm{sup}} N$ is spanned by $\{\p_t,T,\nu\}$, where $\p_t$ is the coordinate vector of $[0,T]$.
%
Let us write $\dot\gamma(0) \in T_{z_\mathrm{sup}} N$ in the form
\[
 \dot\gamma(0)=(\dot\gamma^t(0),\dot\gamma^T(0),\dot\gamma^\nu(0)).
\]

%
Suppose now to the contrary that $\gamma$ does not intersect $\Sigma$ transversally.
Then it follows that $\dot\gamma^\nu(0)=0$. Indeed, if this is not the case, then $T_{z_\mathrm{sup}} \Sigma + T_{z_\mathrm{sup}} \mathrm{graph}(\gamma)$ would be equal to $T_{z_\mathrm{sup}}N$.
Let us check whether $\dot\gamma(0)$ is normal to $\Sigma_{t_\mathrm{sup}}:=\Sigma\cap \{t=t_\mathrm{sup}\}$.
%
Since $\Sigma_{t_\mathrm{sup}}$ is space-like, the normal space 
$$
N_{z_\mathrm{sup}}\Sigma_{t_\mathrm{sup}} := \{ v\in T_{z_\mathrm{sup}}N \mid \langle v,w\rangle_g = 0\text{ for all } w\in T_{z_\mathrm{sup}}\Sigma_{t_\mathrm{sup}} \}
$$
(see \cite[p.98 or p.198]{One83}) is spanned by $\p_t$ and $\nu$.
To see this, note that a vector $X\in T_{z_\mathrm{sup}}N$, $X = a\p_t + b\cdot T + c\nu$, is in $N_{z_\mathrm{sup}}\Sigma_{t_\mathrm{sup}}$ if and only if $b\in\R^{n-1}$ is zero.
Because $\dot\gamma^\nu(0)=0$, then if $\dot\gamma(0)\in N_{z_\mathrm{sup}}\Sigma_{t_\mathrm{sup}}$, we must have $\dot\gamma(0) = (\dot\gamma^t(0),0,0)$.
But this is not possible, since $\gamma$ is light-like.
So $\dot\gamma(0)$ is not normal to $\Sigma_{t_\mathrm{sup}}$ and by \cite[Ch. 10, Lemma 50]{One83} there exists a time-like curve $\sigma$ from $x$ to $\Sigma_{t_\mathrm{sup}}$.
By slightly deforming $\sigma$ we obtain another time-like curve $\tilde\sigma$ connecting $x$ to $z'\in\Sigma$ with $t(z')>t_\mathrm{sup}$.
This contradicts the definition of $t_\mathrm{sup}$.

The claim about the past-directed optimal geodesic follows by defining 
\[
 t_{\textrm{inf}}=\inf\s\{\tilde{t}\in [0,T]\mid \text{ there is } \tilde{z}\in \Sigma \text{ such that } t(\tilde z)=\tilde{t} \text{ and } \tau(x,\tilde z)>0\}.
\]
and by using arguments analogous to the ones above to find $z_{\textrm{inf}}\in \Sigma$ with $\tau(x,z_{\textrm{inf}})=0$.
\end{proof}

By using boundary optimal geodesics and related Gaussian beams we may separate points of $[0,T]\times \Omega$ by solutions to $\square_g v=0$. We mention here that separation of points by solutions has been beneficial in the study of inverse problems for elliptic equations \cite{LLS19, GST19}.

\begin{proposition}[Separation of points]\label{prop:separation_of_points}

Let $(N,g)$ be globally hyperbolic, $N= \R\times M$. Let $x\in I^-(\Sigma)\cap ([0,T]\times \Omega)$ and $y\in N$ be such that
 $y\notin J^-(x)$.
 Denote by $v_f$ the solution to $\square_g v=0$ in $N$ with $v|_{\Sigma}=f$ and whose Cauchy data vanishes at $t=T$. Then there 
 is $f\in C^\infty(\Sigma)$ such that
 \begin{equation}\label{separation_of_points}
  v_f(x)\neq v_f(y).
 \end{equation}
If $x\in I^+(\Sigma)\cap ([0,T]\times \Omega)$ and $x\notin J^-(y)$, we have the same claim for solutions of $\square_g v=0$ in $N$ with $v|_{\Sigma}=f$ whose Cauchy data instead vanishes at $t=0$.
\end{proposition}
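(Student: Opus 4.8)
The plan is to produce the separating solution as a Gaussian beam concentrated on a carefully chosen lightlike geodesic through $x$. First I would reduce the statement to a purely geometric construction: suppose $\gamma$ is a lightlike geodesic through $x$ whose portion inside $[0,T]\times\Omega$ stays strictly below $\{t=T\}$, meets $\Sigma$, and avoids $y$. Then Proposition~\ref{Gaussian_beam_construction} together with the time-reversed form of Corollary~\ref{correction_to_solution} (via Remark~\ref{rem:remark_time_inversion}) yields an exact solution $v=v_\tau+r_\tau$ of $\square_g v=0$ with vanishing Cauchy data at $\{t=T\}$, where $v_\tau$ is supported in an arbitrarily small tube around $\gamma$ and $r_\tau=O(\tau^{-K})$ in any $H^{l'}$, hence in $C^0$ by Sobolev embedding. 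Setting $f:=v|_\Sigma$ and using uniqueness in Proposition~\ref{thm:energy}, we get $v_f=v$. Since $\Theta(\gamma(s))=0$ and, by the last line of Proposition~\ref{Gaussian_beam_construction}, the leading amplitude may be normalised so that $a_0(x)=1$, we have $\abs{v_f(x)}\gtrsim \tau^{n/2p}$; if $y$ lies outside the (small) support of $v_\tau$, then $v_f(y)=v_\tau(y)+r_\tau(y)=O(\tau^{-K})$. Thus $v_f(x)\neq v_f(y)$ for $\tau$ large, and this $f$ is the desired function.

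It remains to construct $\gamma$. By Lemma~\ref{optimal_geo} there is a past-directed boundary optimal geodesic $\gamma_0$ from a point $z_{\mathrm{inf}}\in\Sigma$ to $x$, meeting $\Sigma$ transversally, with $\tau(x,z_{\mathrm{inf}})=0$ and time coordinate $t(z_{\mathrm{inf}})=t_{\mathrm{inf}}$. I would first verify that $t_{\mathrm{inf}}<T$: as $x\in I^-(\Sigma)$ there is $z^\ast\in\Sigma\cap I^+(x)$, and since $I^+(x)$ is open it contains points of $\Sigma$ of time coordinate strictly less than $T$, forcing $t_{\mathrm{inf}}<T$. Because $t$ is strictly monotone along the causal curve $\gamma_0$ and $\gamma_0$ leaves $\Omega$ transversally at $z_{\mathrm{inf}}$, the set $\gamma_0\cap([0,T]\times\Omega)$ lies in $\{t\le t_{\mathrm{inf}}\}$ and therefore stays strictly below $\{t=T\}$.

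The crux is arranging $y\notin\gamma$. For any geodesic through $x$, its portion to the past of $x$ lies in $J^-(x)$, and by hypothesis $y\notin J^-(x)$ (note $y\neq x$, since $x\in J^-(x)$); so the past portion automatically misses $y$. The only danger is the future portion from $x$ to $z_{\mathrm{inf}}$, which sits in $J^+(x)$ and is not excluded by the hypothesis. Here I would perturb the initial direction of $\gamma_0$ at $x$: the future lightlike directions whose geodesic meets $\Sigma$ transversally at a time $<T$ form an open set containing the direction of $\gamma_0$ (by transversality, continuous dependence of geodesics on initial data, and $t_{\mathrm{inf}}<T$), while the directions whose geodesic also passes through the single point $y$ cannot fill an open set. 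Choosing a direction in the open set but off this thin set produces $\gamma$; its past portion still lies in $J^-(x)$ and so still misses $y$. Since $\gamma\cap([0,T]\times\Omega)$ is then a compact set not containing $y$, the beam support can be shrunk to be disjoint from $y$.

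I expect the perturbation step to be the main obstacle, namely simultaneously keeping the geodesic below $\{t=T\}$, keeping it incident to $\Sigma$, and steering it off $y$, since all three constraints must be preserved at once. The second assertion, with $x\in I^+(\Sigma)$, $x\notin J^-(y)$ (i.e.\ $y\notin J^+(x)$) and Cauchy data vanishing at $\{t=0\}$, follows by the identical argument applied under the time reversal $t\mapsto T-t$ of Remark~\ref{rem:remark_time_inversion}, using the future-directed boundary optimal geodesic of Lemma~\ref{optimal_geo} reaching $z_{\mathrm{sup}}\in\Sigma$ with $t_{\mathrm{sup}}>0$.
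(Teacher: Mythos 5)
Your overall architecture matches the paper's proof: the reduction to finding a lightlike geodesic through $x$ that meets $\Sigma$ transversally, stays below $\{t=T\}$ and avoids $y$; the Gaussian beam plus correction term with the normalization $|v_f(x)|\approx 1$, $v_f(y)=O(\tau^{-K})$; the time reversal for the second claim; and the correct observation that the past portion of any geodesic through $x$ automatically misses $y$ because it lies in $J^-(x)$. The gap is in the perturbation step — exactly where you anticipated trouble — but the real problem is not juggling the three constraints: it is the unproven assertion that "the directions whose geodesic also passes through the single point $y$ cannot fill an open set." That claim is false on general globally hyperbolic manifolds. In the Einstein static universe $\R\times\Sn^3$, \emph{every} future-directed lightlike geodesic from a point $x$ refocuses at a single point $y\in J^+(x)\setminus I^+(x)$; such a $y$ satisfies the hypothesis $y\notin J^-(x)$, yet the set of null directions at $x$ whose geodesics pass through $y$ is the entire null sphere. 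Nothing in your argument rules out such refocusing, and this paper's explicitly stated purpose is to treat precisely the settings (non-convex boundary, lightlike geodesics intersecting more than once) where this degeneracy can occur; your "thin set" count is valid in Minkowski space but not here.

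The missing ingredient is the optimality of $\gamma_0$, i.e.\ $\tau(x,z_{\mathrm{inf}})=0$, which you use only for transversality and $t_{\mathrm{inf}}<T$ and never in the avoidance step. The paper's argument is: if a perturbed lightlike geodesic $\tilde\gamma$ through $x$ (distinct from $\gamma_0$, still meeting $\Sigma$ transversally) also met $y$, then one obtains a future-directed broken causal path from $x$ to $z_{\mathrm{inf}}$ by following $\tilde\gamma$ from $x$ to near $y$, switching to $\gamma_0$, and continuing along $\gamma_0$ to $z_{\mathrm{inf}}$; since the path has a corner joining two distinct null geodesics, it can be deformed to a causal path with a timelike portion by \cite[Ch.~10, Prop.~46]{One83}, whence $\tau(x,z_{\mathrm{inf}})>0$, contradicting optimality. (One could alternatively salvage your idea by noting that an open set of directions through $y$ would force $y$ to be a null conjugate point of $x$ along $\gamma_0$ occurring before $z_{\mathrm{inf}}$, which again contradicts $\tau(x,z_{\mathrm{inf}})=0$ — but either way the time-separation function, not a genericity or dimension count, is what steers the perturbed geodesic off $y$.) Without this step your proof does not go through.
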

\begin{proof}

We first claim that there is a past-directed lightlike geodesic from $\Sigma$ that meets the point $x$ but not $y$. We argue by contradiction and assume the opposite that all past-directed lightlike geodesics from $\Sigma$ to $x$ meet both $x$ and $y$. 
Since $x\in I^-(\Sigma)\cap ([0,T]\times \Omega)$, by Lemma~\ref{optimal_geo} we have that there is a past-directed boundary optimal geodesic $\gamma_1:[0,1]\to [0,T]\times \Omega$ with $\gamma_1(0)=z\in\Sigma$ and $\gamma_1(1)=x$. The first intersection of $\gamma_1$ with $\Sigma$ is transverse. If $x\notin J^-(y)$, then by the assumption $y\notin J^-(x)$ we have that $x$ and $y$ are not causally connected. Thus $\gamma_1$ can not pass trough $y$ and we have found our lightlike geodesic. Therefore, we may assume that $y\geq x$.

Let $\tilde \gamma_1$ be a past-directed geodesic with $\tilde \gamma_1(0)\in \Sigma$, such that $\tilde \gamma_1=\tilde \gamma_1(s)$ intersects $\Sigma$ transversally at $s=0$, and which satisfies $\tilde \gamma_1(\tilde s)=x$ for some $\tilde s \geq 0$. The geodesic $\tilde \gamma_1$ can be obtained by perturbing the tangent vector of $\gamma_1$ at $\gamma_1(1)=x$ slightly. By assumption $\tilde \gamma_1$ meets $y$. In this case we have a shortcut path, which has timelike portion, obtained by traveling along $\tilde\gamma_1$ from $x$ to a point $y'$ close to $y$, doing a shortcut from $y'$ to $\gamma_1$ and then by continuing along $\gamma_1$ to $z$, see~\cite[Ch. 10, Prop. 46]{One83}. Since the shortcut path has timelike portion, it has positive length. Since $y\geq x$, the shortcut path is also future-directed. It follows that $\tau(x,z)>0$. This contradicts optimality of $\gamma_1$. We conclude that $\tilde\gamma_1$ is a past-directed lightlike geodesic from $\Sigma$ that meets $x$ but not $y$.

To conclude the proof, we use Proposition~\ref{Gaussian_beam_construction} and choose a Gaussian beam $v_\tau=e^{i\s \tau\Theta}a$ corresponding to $\tilde\gamma_1$ with $k>n$, $K=1$ and $p,l=2$. We also choose the support of the amplitude $a$ be so small that $y\notin \supp(a)$ and $\supp(a)\cap \{t=T\}=\emptyset$. 
We will use the Sobolev embedding $H^{l'}\subset L^\infty$, which hold for ${l'}>\frac{n+1}{2}$.
Since $k>n$, then $k-\frac{n-1}{2}>\frac{n+1}{2}$ which shows that we can take ${l'}$ such that $\frac{n+1}{2}<{l'} < k-\frac{n-1}{2}$.
Applying Corollary~\ref{correction_to_solution} with these $k$ and ${l'}$ shows that there is  $r_\tau\in C^\infty(N)$ such that
\[
v_f:=\tau^{-n/4} v=\tau^{-n/4}v_\tau +\tau^{-n/4}r_\tau
\]
satisfies $\square_g v_f=0$ and
\[
 \tau^{-n/4}v_\tau(x)=1 \text{ and } \tau^{-n/4}v_\tau(y)=0 \text{ for all } \tau\geq \tau_0 
\]
and 
\[
 \norm{\tau^{-n/4}r_\tau}_{L^\infty(N)}\leq C \tau^{-n/4}\norm{r_\tau}_{H^{l'}(N)}=\tau^{-n/4}O(\tau^{-1}).
\]
We mention for future reference, that at any other point of $z\in [0,T]\times \Omega$, we have 
\begin{equation}\label{eq:other_points}
 \abs{v_f(z)}\leq \abs{\tau^{-n/4}v_\tau(z)}+ \abs{\tau^{-n/4}r_\tau(z)}\leq C' +\abs{\tau^{-n/4}r_\tau(z)}\leq C,
\end{equation}
for all $\tau$ large enough.
Here we used the Sobolev embedding. 
Taking $\tau$ large enough shows that 
\[
 v_f(x)\neq v_f(y). 
\]
The claim about the case $x\in I^+(\Sigma)$ and $x\notin J^-(y)$  is proven similarly.
\end{proof}
We next consider the case where we have multiple points of $[0,T]\times \Omega$, which we wish to separate by solutions of $\square_gv =0$. The points will correspond to the intersection points of pairs of geodesics we use for our inverse problem. The matrix \eqref{separation_of_multiple_points_matrix} is called a \emph{separation matrix}.


\begin{lemma}[Separation matrix]\label{lemma:separation_of_multiple_points}
Assume that $(N,g)$ is as in Proposition~\ref{prop:separation_of_points}.
Let $x_1,\ldots,x_P\in I^-(\Sigma)\cap ([0,T]\times \Omega)$ be such that $x_1 < x_2< \cdots < x_P$. 
 Denote by $v_f$ the solution of $\square_g v=0$ in $[0,T]\times \Omega$ with $v|_{\Sigma}=f$ and whose Cauchy data vanishes at $t=T$. Then there 
 are boundary values $f_k\in C^\infty(\Sigma)$ such that the matrix
 \begin{equation}\label{separation_of_multiple_points_matrix}
 \begin{pmatrix}
        v_{f_1}(x_1) & v_{f_2}(x_1) &\cdots & v_{f_P}(x_1) \\
        v_{f_1}(x_2) & v_{f_2}(x_2) &\cdots & v_{f_P}(x_2) \\
        \vdots & & \ddots & \vdots \\
        v_{f_1}(x_P) & v_{f_2}(x_P) &\cdots & v_{f_P}(x_P) \\
    \end{pmatrix}
 \end{equation}
 is invertible.
 
 If $x_k\in I^+(\Sigma)\cap ([0,T]\times \Omega)$, we have the similar claim for solutions of $\square_g v=0$ in $[0,T]\times \Omega$ with $v|_{\Sigma}=f$ whose Cauchy data instead vanishes at $t=0$.
\end{lemma}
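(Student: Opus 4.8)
The plan is to produce the boundary data $f_k$ as traces of Gaussian beams concentrated on lightlike geodesics $\gamma_k$ through $x_k$, all built with one common large parameter $\tau$, and then to show that as $\tau\to\infty$ the matrix \eqref{separation_of_multiple_points_matrix} becomes an arbitrarily small perturbation of a triangular matrix with unit diagonal. Invertibility then follows from continuity of the determinant. The whole argument runs parallel to the single-point Proposition~\ref{prop:separation_of_points}, the new input being that the causal ordering $x_1<\cdots<x_P$ is what forces the triangular shape.

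\textbf{Construction.} For each $k$ I would invoke Lemma~\ref{optimal_geo} to obtain a boundary optimal geodesic $\gamma_k$ from $x_k$ to the earliest reachable boundary point $z_{k,\inf}\in\Sigma$; this geodesic meets $\Sigma$ transversally, and since $z_{k,\inf}$ is the earliest such point and the time coordinate increases along any future-directed causal curve, the relevant segment lies in $\{t<T\}$. Hence, via the time reversal of Remark~\ref{rem:remark_time_inversion}, Corollary~\ref{correction_to_solution} supplies an exact solution $v_{f_k}=\tau^{-n/4}\big(v^{(k)}_\tau+r^{(k)}_\tau\big)$ of $\square_g v=0$ with Cauchy data vanishing at $t=T$, where $v^{(k)}_\tau$ is a Gaussian beam on $\gamma_k$ (taking $p=2$) and $\norm{r^{(k)}_\tau}_{H^{l'}}=O(\tau^{-K})$. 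Using the last sentence of Proposition~\ref{Gaussian_beam_construction} I would normalize so that $a_0(x_k)=1$, and I would shrink the amplitude support to exclude the finite set $\{x_1,\dots,x_P\}\setminus\{x_k\}$ of points lying off $\gamma_k$; the uniformity needed when varying $k$ is furnished by Corollary~\ref{uniform_family_of_Gaussian_beams}.

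\textbf{Triangular structure.} The segment of $\gamma_k$ lies in $J^+(x_k)$, while each earlier point satisfies $x_l\in J^-(x_k)\setminus\{x_k\}$ for $l<k$; since $(N,g)$ is globally hyperbolic one has $J^+(x_k)\cap J^-(x_k)=\{x_k\}$, so these earlier points sit at positive distance from the compact segment and may be excluded from the support. The estimates of Section~\ref{sec:Gaussian_beams} then give three facts: $v_{f_k}(x_k)=1+O(\tau^{-1})$ on the diagonal; $v_{f_k}(x_l)=\tau^{-n/4}r^{(k)}_\tau(x_l)=O(\tau^{-K-n/4})\to0$ for every earlier point $l<k$ (the above-diagonal entries); and $|v_{f_k}(x_l)|\le c_0$ for all entries by the uniform $L^\infty$ bound \eqref{eq:other_points}. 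Thus \eqref{separation_of_multiple_points_matrix} converges to a lower triangular matrix with unit diagonal. Expanding $\det$ over permutations, the identity contributes $\prod_k v_{f_k}(x_k)\to1$, and any non-identity $\sigma$ has some index $l$ with $\sigma(l)>l$, hence an above-diagonal factor tending to $0$ against bounded cofactors, so its contribution vanishes; therefore $\det\to1$. Fixing $\tau$ large makes the matrix invertible, and I set $f_k:=v_{f_k}\big|_\Sigma\in C^\infty(\Sigma)$.

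\textbf{Main obstacle and the $I^+$ case.} I expect the delicate point to be geometric: securing, for every $x_k$ simultaneously, an optimal geodesic that meets $\Sigma$ transversally \emph{and} stays below $\{t=T\}$, so that a $t=T$-vanishing beam with uniformly controlled constants exists — this is exactly what Lemma~\ref{optimal_geo}, Corollary~\ref{correction_to_solution} and the common-parameter family of Corollary~\ref{uniform_family_of_Gaussian_beams} are designed to provide. A secondary subtlety arises only if one wants the below-diagonal entries small as well: should $\gamma_k$ happen to pass through a later point $x_l$ with $l>k$, one can dislodge it by perturbing its initial direction exactly as in Proposition~\ref{prop:separation_of_points}, since a perturbed geodesic still meeting $x_l$ would yield a positive-length causal shortcut contradicting the optimality $\tau(x_k,z_{k,\inf})=0$; this refinement is not needed for invertibility. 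Finally, the case $x_k\in I^+(\Sigma)$ follows by applying the argument to the time-reversed metric of Remark~\ref{rem:remark_time_inversion}, which interchanges the roles of $t=0$ and $t=T$.
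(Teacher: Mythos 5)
Your construction has a genuine gap in the step that produces the small (above-diagonal) entries. You argue that for $l<k$ the point $x_l$ can be kept out of the support of $v_{f_k}$ because $x_l\in J^-(x_k)\setminus\{x_k\}$ cannot meet the segment of $\gamma_k$ lying in $J^+(x_k)$. But the Gaussian beam attached to $\gamma_k$ is not supported near that segment only: the amplitude is cut off only transversally, by $\chi(|y|/\delta')$, so the beam occupies a tube around the \emph{entire} maximal geodesic inside $[0,T]\times\Omega$, including its past-directed portion beyond $x_k$. You cannot truncate the amplitude longitudinally at $x_k$, since $x_k$ is an interior point of the domain: a cutoff in the $s$-variable makes the term $-2i\s\tau\s g(d\Theta,da)$ in \eqref{box_of_WKB} of size $O(\tau)$ on the geodesic (there $d\Theta=dy^1$ and $g^{01}=-1\neq 0$), so $\square_g v_\tau$ is no longer $O(\tau^{-K})$ and Corollary~\ref{correction_to_solution} no longer yields a small correction $r_\tau$. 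Causality does not prevent an \emph{earlier} point $x_l\in J^-(x_k)$ from lying on this past-directed portion of $\gamma_k$; if it does, $v_{f_k}(x_l)\approx 1$ rather than $O(\tau^{-K-n/4})$. Hence your above-diagonal entries are in general only bounded, and since your below-diagonal entries are also only bounded, the determinant argument collapses: a matrix with unit diagonal and merely bounded off-diagonal entries need not be invertible.

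The ingredient you dismiss as ``not needed for invertibility'' is exactly what repairs this, and it is how the paper argues. One makes the entries at \emph{later} points small: for $l>k$, the point $x_l$ cannot lie on the past-directed portion of $\gamma_k$ (that would produce a closed causal curve, impossible in a globally hyperbolic manifold), and it can be removed from the future-directed portion by perturbing the direction of $\gamma_k$ at $x_k$ --- if the perturbed geodesic still met $x_l\geq x_k$, the shortcut construction of Proposition~\ref{prop:separation_of_points} would give a causal path of positive length from $x_k$ to the boundary, contradicting the optimality $\tau(x_k,z_{k,\mathrm{inf}})=0$. The entries at earlier points are then controlled only by the uniform bound \eqref{eq:other_points}, i.e.\ bounded by a constant, and this suffices: with unit diagonal, one triangle of size $O(\tau^{-1-n/4})$ and the other triangle bounded, every non-identity permutation in the determinant expansion contains at least one small factor, so the determinant tends to $1$. (Which triangle is the small one is immaterial for this last step --- your determinant computation itself is fine --- but the smallness must come from the optimality/shortcut argument at later points, not from causality at earlier ones.) The remaining parts of your proposal, namely the time reversal, the use of the correction term, the uniform bounds, and the reduction of the $I^+(\Sigma)$ case, do match the paper's proof.
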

\begin{proof}
 The proof is an iteration of the proof Proposition~\ref{prop:separation_of_points}. First we let $\gamma_1$ be a past-directed boundary optimal geodesic that connects a point $z\in \Sigma$ to the point $x_1$. By the shortcut argument in the proof of Proposition~\ref{prop:separation_of_points}, we deduce after possibly redefining $\gamma_1$ as its small perturbation, that $\gamma_1$ does not meet any of the other points $x_k$, $k=2,\ldots, P$. Let $v_{f_1}$ be a Gaussian beam solution (including the correction term) as in proof Proposition~\ref{prop:separation_of_points}, where $f_1\in C^\infty(\Sigma)$. Then there is $\tau_1>0$ such that for $\tau\geq \tau_1$, we have
 \[
  v_{f_1}(x_1)=1 \text{ and } v_{f_1}(x_k)=O(\tau^{-1-n/4}), \ k=2,\ldots, P.
 \]

 Next, let $\gamma_2$ be a past-directed boundary optimal geodesic that connects $z\in \Sigma$ to the point $x_2$. By repeating the above argument we find a boundary value $f_2\in C^\infty(\Sigma)$ and a solution $v_{f_2}$ such that
 \[
  v_{f_2}(x_2)=1 \text{ and } v_{f_2}(x_k)=O(\tau^{-1-n/4}), \ k=3,\ldots, P
 \]
 for all $\tau\geq \tau_2$.
 Note that we do not claim that we have much control on the value of $v_{f_2}$ at $x_1$ and it might be that $\gamma_2$ meets also the point $x_1$. However, by \eqref{eq:other_points} we know that $|v_{f_2}|$ at $x_1$ is bounded by $C$ (possibly by defining $\tau_2$ larger). This is illustrated in Figure~\ref{pic:Geodesics}, which can be found from the introduction, Section \ref{sec:Lorentzian_geom_tools}.
 By repeating the above arguments, we find other solutions $v_{f_k}$, $k=3,\ldots, P$ such that the matrix~\eqref{separation_of_multiple_points_matrix} becomes of the form
 \begin{equation}\label{invertible_matrix_z}
\mathcal{V}_\tau= \begin{pmatrix}
        1 & O(\tau^{-1-n/4}) & O(\tau^{-1-n/4}) \\
        \# &\ddots & O(\tau^{-1-n/4}) \\
        \#& \# & 1  \\
        \end{pmatrix}.
 \end{equation}
 Here $\#$ means unspecified complex numbers bounded by some fixed constant. The determinant of this matrix tends to $1$ as $\tau\to \infty$. Therefore, there is $\tau_0\geq 1$ such that the matrix~\eqref{separation_of_multiple_points_matrix} is invertible for all $\tau\geq \tau_0$. 
\end{proof}

The previous lemma shows that if we are given a set of points $x_1< \cdots< x_k$ one can find a set of Gaussian beams separating these points.
However, for the proof of the stability estimate in Theorem \ref{thm:stability}, we need a finite collection
\[
 \mathcal{M}
\]
of boundary values corresponding to Gaussian beams that suffice to separate any finite set of points.
The next lemma shows how to construct such collection. The finite collection $\mathcal{M}$ is called a \emph{separation filter}.

Let $\overline{g}$ be an auxiliary Riemannian metric on $\R\times M$.

\begin{lemma}[Separation filter]\label{lem:separation_filter}
Let $P\geq 1$ be an integer and let $\delta >0$. Suppose $K\subset I^-(\Sigma)\cap I^+(\Sigma)\cap([0,T]\times \Omega)$ is a compact set. There exists a finite collection $\M\subset C^\infty(\Sigma)$ of boundary values with the following properties: Assume that $x_1,\ldots, x_P\in K$ are any points such that $x_1<  x_2< \cdots < x_P$ and $d_{\overline{g}}(x_k,x_l)> \delta$ for $x_k\neq x_l$, $k,l=1,\ldots,P$. Then there are $f_1,\ldots, f_P\in\M\subset C^\infty(\Sigma)$  and corresponding solutions $v_{f_k}$ of $\square_g v_{f_k}=0$ with vanishing Cauchy data at $t=T$, such that the separation matrix $(v_{f_i}(x_j))_{i,j=1}^P$ in \eqref{separation_of_multiple_points_matrix} is invertible. 
\end{lemma}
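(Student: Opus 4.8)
The plan is to upgrade the pointwise Lemma~\ref{lemma:separation_of_multiple_points} to a uniform statement by a compactness argument. Lemma~\ref{lemma:separation_of_multiple_points} produces, for each \emph{fixed} admissible tuple of points, boundary values whose solutions separate those points; here we instead need one finite pool $\M$ that works simultaneously for every admissible tuple. The mechanism bridging the two is that invertibility of the separation matrix is an open condition in the points, while the space of admissible tuples is compact.

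First I would set up the parameter space of tuples. Put
\[
 \mathcal{D}_\delta := \{(x_1,\ldots,x_P)\in K^P \mid x_k\leq x_{k+1},\ k=1,\ldots,P-1,\ \ d_{\overline{g}}(x_k,x_l)\geq \delta \text{ for } k\neq l\}.
\]
Since $K$ is compact, so is $K^P$. The chain condition is closed because $(N,g)$ is globally hyperbolic, so the relation $\leq$ (equivalently $J^+$) is closed; the condition $d_{\overline{g}}(x_k,x_l)\geq\delta$ is manifestly closed. Hence $\mathcal{D}_\delta$ is a closed subset of a compact set and therefore compact. Moreover every tuple in $\mathcal{D}_\delta$ has pairwise distinct entries (because $\delta>0$), so the non-strict chain $x_k\leq x_{k+1}$ is in fact the strict chain $x_1<\cdots<x_P$ by transitivity. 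Thus each element of $\mathcal{D}_\delta$ satisfies the hypotheses of Lemma~\ref{lemma:separation_of_multiple_points}, while every admissible tuple occurring in the present lemma lies in $\mathcal{D}_\delta$.

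Next I would localize. Fix $\vec{x}=(x_1,\ldots,x_P)\in\mathcal{D}_\delta$ and apply Lemma~\ref{lemma:separation_of_multiple_points} (fixing a sufficiently large value of the Gaussian-beam parameter $\tau$) to obtain boundary values $f_1^{\vec{x}},\ldots,f_P^{\vec{x}}\in C^\infty(\Sigma)$ and the corresponding solutions $v_{f_i^{\vec{x}}}$ of $\square_g v=0$ with Cauchy data vanishing at $t=T$, such that $(v_{f_i^{\vec{x}}}(x_j))_{i,j=1}^P$ is invertible. With these boundary values now \emph{frozen}, the solutions $v_{f_i^{\vec{x}}}$ are fixed smooth functions on $[0,T]\times\Omega$, so the map $\vec{y}=(y_1,\ldots,y_P)\mapsto \det\big(v_{f_i^{\vec{x}}}(y_j)\big)_{i,j=1}^P$ is continuous. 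As it is nonzero at $\vec{y}=\vec{x}$, there is an open neighbourhood $U_{\vec{x}}\subset K^P$ of $\vec{x}$ on which, using the same boundary values $f_1^{\vec{x}},\ldots,f_P^{\vec{x}}$, the separation matrix remains invertible.

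Finally I would pass to a finite subcover. The sets $\{U_{\vec{x}}\}_{\vec{x}\in\mathcal{D}_\delta}$ form an open cover of the compact set $\mathcal{D}_\delta$, so finitely many of them, $U_{\vec{x}^{(1)}},\ldots,U_{\vec{x}^{(L)}}$, already cover it. Define the finite collection
\[
 \M := \{\, f_k^{\vec{x}^{(j)}} \mid j=1,\ldots,L,\ k=1,\ldots,P \,\}\subset C^\infty(\Sigma).
\]
Given any admissible tuple $\vec{y}$ as in the statement, we have $\vec{y}\in\mathcal{D}_\delta$, hence $\vec{y}\in U_{\vec{x}^{(j)}}$ for some $j$; taking $f_1=f_1^{\vec{x}^{(j)}},\ldots,f_P=f_P^{\vec{x}^{(j)}}\in\M$ makes $(v_{f_i}(y_j))_{i,j=1}^P$ invertible, which is the desired conclusion. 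The main obstacle, and the only place where the geometry genuinely enters, is the first step: establishing that $\mathcal{D}_\delta$ is compact and that its points still satisfy the hypotheses of Lemma~\ref{lemma:separation_of_multiple_points}. This rests on global hyperbolicity, which keeps the causal chain condition closed under limits, and on the strict separation $d_{\overline{g}}\geq\delta$, which forces limit tuples to consist of distinct points and hence to remain strict causal chains. Once this is in place, the continuity-plus-finite-subcover step is routine.
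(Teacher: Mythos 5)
Your proof is correct, but it takes a genuinely different route from the paper's. You treat Lemma~\ref{lemma:separation_of_multiple_points} as a black box, work directly on the space of admissible tuples $\mathcal{D}_\delta\subset K^P$ (compact because the relation $\leq$ is closed on a globally hyperbolic manifold and the distance constraint is closed), and exploit the fact that invertibility of the separation matrix is an open condition in the evaluation points; a single finite subcover then finishes. The paper instead runs a two-level compactness argument anchored at single points rather than tuples: for each $x$ in a neighbourhood of $K$ it covers the compact set $\mathcal{T}(x)$ of ordered future configurations $(x_2,\ldots,x_P)$ by finitely many Gaussian beams that are $\geq 2/3$ near $x$, $<\eps$ near each $x_k$, and $\leq C$ globally; it then covers $K$ by the resulting sets $\tilde U_\eps(x^{(j)})$ and, for a given admissible tuple, assembles the separation matrix row by row so that it is approximately lower triangular, with invertibility following from the explicit determinant expansion for the matrix \eqref{Kmatrix}. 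What the paper's heavier construction buys is quantitative uniformity: its separation matrices have determinant bounded below by a fixed constant, roughly $(2/3)^P-O(\eps_0)$, independent of the tuple, and this is what is implicitly invoked in Section~\ref{sec:multiple_intersections} to bound $\norm{A}^{-1}$ uniformly over $z\in W$. Your argument as written yields bare invertibility, with no uniform lower bound on the determinant as the tuple moves inside a patch $U_{\vec{x}}$; to serve that later application you would strengthen it by one line: define $U_{\vec{x}}$ as the set of tuples where $\abs{\det}$ exceeds half its value at $\vec{x}$, so that on the finite subcover the determinant is uniformly bounded away from zero. With that small addition, your shorter argument delivers everything the paper's proof is used for.
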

\begin{proof}
\textbf{Case 1.} If $P=1$, then the situation is similar to the proof of Proposition~\ref{prop:separation_of_points}.
Applying Lemma~\ref{optimal_geo} to $x_1$, we find a past-directed boundary optimal geodesic $\gamma$ from  $\Sigma$ to $x_1$, whose first intersection with $\Sigma$ is transverse.
Using Corollary~\ref{correction_to_solution} we can then construct a Gaussian beam $v$ (including the correction term and with vanishing Cauchy data at $\{t=T\}$) corresponding to $\gamma$, such that
\[
v(x_1) = 1.
\]
By continuity of $v$, the point $x_1$ has a neighbourhood $B(x_1)$ such that
\[
|v(z)|>\frac{2}{3},\quad\text{for all } z\in B(x_1).
\]
Doing this for all points $x\in K$ we find an open cover of $K$ of the form
$$
\bigcup_{x\in K} B(x)
$$
and to each $B(x)$ the corresponding optimal geodesic and the respective Gaussian beam $v$.
Because $K$ is compact, there is a finite subcover
$$
\bigcup_{j=1}^R B(x^j)
$$
of $K$ and the corresponding finite collection of Gaussian beams.
Denoting the set of boundary values of these Gaussian beams by $\M$ completes the proof for $P=1$.

\textbf{Case 2.} Suppose now $P\geq 2$.
%
%
To begin, consider a complex matrix of the form
 \begin{equation}\label{Kmatrix}
	\begin{pmatrix}
        d_1 &  & \mathcal{O} \\
         & \ddots &  \\
        \#&  & d_P  \\
    \end{pmatrix},
 \end{equation}
where all entries $\#$ are bounded by a fixed constant $C>0$ and the diagonal entries satisfy $|d_j|>2/3$, $j=1,\ldots,P$.
When the elements of the upper triangular part $\mathcal{O}$ are of the size $\eps>0$, the determinant of the matrix in \eqref{Kmatrix} equals 
\[
d_1\cdots d_P+ O(\eps).
\]
This can be seen by considering the definition of the determinant in terms of minors.
Thus the matrix in \eqref{Kmatrix} is invertible when $\eps$ is  small enough.
%
%
%

We construct an open cover of $K$ as follows. 
Let $\tilde K \subset J^+(\Sigma) \cap J^-(\Sigma)\cap \big([0,T]\times\Omega\big)$ be an open neighbourhood of $K$.
Let us fix $x\in \tilde K$ and let $B_{\frac{\delta}{2}}(x)$ denote a $\frac{\delta}{2}$-radius ball centered at $x$ with respect to the metric $\overline{g}$. Let us also denote
\[
 \mathcal{V}(x):=\big(J^+(x)\setminus B_{\frac{\delta}{2}}(x)\big)\cap \big([0,T]\times\Omega\big).
\]
Since $J^+(x)$ is closed, the set $\mathcal{V}(x)$ is compact for all $x\in \tilde K$.
We define the subset of $\mathcal{V}(x)^{P-1}$ of ordered points by
\begin{align*}
 \mathcal{T}(x):=\{(x_2,\ldots,x_P)\in \mathcal{V}(x)^{P-1}: x\leq x_2&\leq \cdots \leq x_P\}.
\end{align*}
Because the relation $\leq$ is closed (see e.g. \cite[Section 14, Lemma 22]{One83}), the set 
$\mathcal{T}(x)$ is compact as a closed subset of the compact set $\mathcal{V}(x)^{P-1}$.


Let $\eps>0$ and let $X=(x_2,\ldots,x_P)\in \mathcal{T}(x)$. 
Recall that when constructing a Gaussian beam $v$, we can bound its size in absolute value by using the estimate \eqref{eq:other_points}.
%
Since $x\in I^-(\Sigma)\cap ([0,T]\times \Omega)$, there is $f_X\in C^\infty(\Sigma)$ and a Gaussian beam $v_X$ (including the correction term and with vanishing Cauchy data at $\{t=T\}$) and $\tau_X>0$ such that there is a neighbourhood $U_\eps(x)\subset B_{\frac{\delta}{3}}(x)$ of $x$ and neighbourhoods $B(x_k)$ of $x_k$ such that
\begin{equation}\label{eq:separate_first_point}
\begin{split}
 & \abs{v_{f_X}}\geq 2/3 \text{ on } U_\eps(x), \\
 & \abs{v_{f_X}} < \eps \text{ on } B(x_k), \quad k=2,3,\ldots, P, \\
 & \abs{v_{f_X}} \leq C \text{ on } [0,T]\times \Omega,
 \end{split}
\end{equation}
where $C>0$ is independent of $\eps>0$.
Here we have first normalized so that $v_{f_X}(x)=1$.
Then we have chosen the $\tau_X$ large enough,
so that the condition $\abs{v_{f_X}} < \eps$ holds on $B(x_k)$, and that $\abs{v_{f_X}} \leq C \text{ on } [0,T]\times \Omega$. These conditions can be obtained since the correction term of a Gaussian beam can be made arbitrarily small by taking the corresponding $\tau$ large enough.  Then, by continuity of $v_{f_X}$ and $v_{f_X}(x)=1$, we have chosen the neighbourhood $U_\eps(x)$ so that $\abs{v_{f_X}}\geq 2/3$. Note that since here $\tau_X$ depends on $\eps$ and $v_{f_X}$ depends on $\tau_X$, the neighbourhood $U_\eps(x)$ depends on $\eps$ as indicated in the notation. See the argument in the proof of Proposition \ref{prop:separation_of_points} for more details.

We now modify the open sets $U_\eps(x)$ slightly.
Let us define
\[
\tilde{U}_\eps(x) := I^+(x) \cap U_\eps(x).
\]
We have that
\[
 \abs{v_{f_X}}\geq 2/3 \text{ on } \tilde U_\eps(x).
\]
Moreover, we have 
\begin{equation}\label{eq:Causality_Beps}
x \leq z \quad\text{for all} \quad z\in \tilde U_\eps(x).
\end{equation}
We then have an open cover of $\mathcal{T}(x)$ given by
 \[
 \bigcup_{X\in \mathcal{T}(x)} B(x_2)\times \cdots \times B(x_P).
\]
Since $\mathcal{T}(x)$ is compact, we may pass to a finite open subcover 
\[
\bigcup_{X\in \mathcal{J_\eps}(x)} B(x_2)\times \cdots \times B(x_P),
\]
where $\mathcal{J_\eps}(x)$ is a finite subset of $\mathcal{T}(x)$ and which depends on $\eps$. Note that for each $X=(x_2,\ldots,x_P)\in \mathcal{J_\eps}(x)$ there are associated neighbourhoods $B(x_2),\ldots, B(x_P)$ of the points $x_2,\ldots,x_P$ and an open set $\tilde U_\eps(x)$.
This shows that to each point $x\in\tilde K$ we can attach a finite collection 
\[
\M_\eps(x) \subset C^\infty(\Sigma)
\]
of boundary values with the following property: For any $X\in \mathcal{T}(x)$ there is some $f_X\in \M_\eps(x)$ so that the corresponding solution $v_{f_X}$ is a Gaussian beam with the property \eqref{eq:separate_first_point} with $U_\eps(x)$ replaced by $\tilde U_\eps(x)$. 
 
We repeat the above argument for all $x\in \tilde K$. Note that if $x\in K$, then there is $\tilde x\in \tilde K\cap J^-(x)$ so that $x\in \tilde U_\eps(\tilde x)$. Thus, our construction yields an open cover of $K\subset [0,T]\times \Omega$ by the sets $\tilde U_\eps(x)$ described above. By compactness, finitely many sets $\tilde U_\eps(x)$ suffice to cover $K$. Let $x^{(j)}\in [0,T]\times \Omega$ be the corresponding points, such that
\begin{equation}\label{eq:finite_sub_cover}
 \bigcup_{j=1}^{R_\eps} \tilde U_\eps(x^{(j)})
\end{equation}
is a finite subcover of $K$, where $R_\eps\in \N$.
To each of these finitely many points $x^{(j)}$ there is also attached a finite subset $\mathcal{J}_\eps(x^{(j)})\subset \mathcal{T}(x^{(j)})$, $j=1,\ldots,R_\eps$.
Corresponding to this finite cover, we take as the collection of boundary values $\M_\eps$ the set
\[
\mathcal{M_\eps}:=\bigcup_{j=1}^{R_\eps} \M_\eps(x^{(j)}).
\]

Let then $x_1,x_2,\ldots, x_P\in K$ with $x_1\leq x_2\leq \cdots \leq x_P$ and $d_{\overline{g}}(x_l,x_k)>\delta$ for $k\neq l$ with $k,l=1,\ldots,P$. 
Let us consider first the point $x_1\in K$.
Corresponding to $x_1$, there is an index $j_1\in \{1,\ldots, R_\eps\}$ and a neighbourhood $\tilde U_\eps(x^{(j_1)})$ of $x_1$, where $\tilde U_\eps(x^{(j_1)})$ belongs to the finite subcover \eqref{eq:finite_sub_cover} of $K$.
The radius of $\tilde U_\eps(x^{(j_1)})$ is less that $\frac{\delta}{3}$.
Note that $d_{\overline{g}}(x^{(j_1)},x_k)> {\frac{\delta}{2}}$ for $k=2,3,\ldots,P$.
Indeed, we have that
\begin{equation}\label{eq:dist_cal}
d_{\overline{g}}(x^{(j_1)},x_k)
\geq
d_{\overline{g}}(x_1,x_k)
-
d_{\overline{g}}(x^{(j_1)},x_1)
> \delta - \frac{\delta}{3}=\frac{2\delta}{3}>\frac{\delta}{2}.
\end{equation}
Moreover, \eqref{eq:Causality_Beps} implies $x^{(j_1)} \leq x_1$. Thus $x^{(j_1)}\leq x_2\leq x_3\leq\cdots\leq x_P$.
Using this and \eqref{eq:dist_cal}, we obtain
\[
 (x_2,x_3,\ldots,x_P)\in \mathcal{T}(x^{(j_1)}).
\]
Consequently, using the definition of $\mathcal{J}_\eps(x^{(j_1)})$, we find $X=(x_2^{(j_1)},\ldots,x_P^{(j_1)})\in \mathcal{J}_\eps(x^{(j_1)})$ with the  associated neighbourhoods 
$B(x_k^{(j_1)})$ of $x_k$, $k=2,3,\ldots, P$, satisfying the following property: 
There is a boundary value $f_1\in \M_\eps$ and the corresponding Gaussian beam $v_{f_1}$ solution to $\square v=0$, such that
\begin{align}\label{eq:properties_of_vf1}
 & \abs{v_{f_1}}\geq 2/3 \text{ on } \tilde U_\eps(x^{(j_1)})\nonumber\\ 
 & \abs{v_{f_1}}< \eps\text{ on } B(x_k^{(j_1)}), \quad k=2,3,\ldots, P, \\
  & \abs{v_{f_1}} \leq C \text{ on } [0,T]\times \Omega.\nonumber
\end{align}

 Let us then proceed to the point $x_2$. Similarly as above, regarding this point there is $j_2\in \{1,\ldots, R_\eps\}$, $x^{(j_2)}\in [0,T]\times \Omega$ and neighbourhoods $\tilde U_\eps(x^{(j_2)})$ of $x_2$ 
and neighbourhoods  $B(x_k^{(j_2)})$ of $x_k$, $k=3,4\ldots, x_P$, and a Gaussian beam $v_{f_2}$, such that
\begin{align*}
  &\abs{v_{f_2}}\geq 2/3 \text{ on } \tilde U_\eps(x^{(j_2)})\\ 
  &\abs{v_{f_2}} < \eps \text{ on } B(x_k^{(j_2)}), \quad k=3,4,\ldots, P, \\
   & \abs{v_{f_2}} \leq C \text{ on } [0,T]\times \Omega.
\end{align*}

Continuing in this manner, we have indices $j_1,j_2,\ldots,j_P$ and a set of Gaussian beams $v_{f_k}$, $k=1,\ldots, P$, such that $\abs{v_{f_k}}\geq 2/3$ on a neighbourhood $\tilde U_\eps(x^{(j_k)})$ of $x_k$ and $\abs{v_{f_k}} < \eps$ on a neighbourhood $B(x^{(j_k)}_l)$ of $x_l$ for $l>k$ and $\abs{v_{f_k}} < C$ on $[0,T]\times \Omega$. 


The separation matrix \eqref{separation_of_multiple_points_matrix} corresponding to the functions $v_{f_k}$ and points $x_k$ is invertible for $\eps\leq \eps_0$ for $\eps_0$ small enough. We set $\M:=\M_{\eps_0}$. Finally, we note that the number of Gaussian beams used is
 \[
  \# \M=\# \left(\bigcup_{j=1}^{R_{\eps_0}} \M_{\eps_0}(x^{(j)})\right) \leq \sum_{j=1}^{R_{\eps_0}} \# \left(\M_{\eps_0}(x^{(j)})\right)=\sum_{j=1}^{R_{\eps_0}} \#\mathcal{J}_{\eps_0}(x^{(j)}),
 \]
which is finite.
\end{proof}

\begin{remark}
We will apply Lemma~\ref{lemma:separation_of_multiple_points} as follows.
Suppose the points $x_1< \cdots < x_P$ are the intersection points of two lightlike geodesics $\gamma_1$ and $\gamma_2$.
Lemma~\ref{lemma:separation_of_multiple_points} then shows that there is a choice of $P$ functions $f_1,\ldots,f_P$ from the finite collection $\M$ such that the corresponding Gaussian beams $v_{f_1},\ldots, v_{f_P}$ separate the points $x_1, \ldots, x_P$.
Moreover, the Gaussian beams constructed this way have zero Cauchy data at $t=T$. 

We also mention that we have a similar result as Lemma \ref{lem:separation_filter} for solutions that have vanishing Cauchy data at $\{t=0\}$. The result is obtained, for example, from Lemma \ref{lem:separation_filter} by considering the isometry $t\mapsto T-t$ as in Remark \ref{rem:remark_time_inversion}.
\end{remark}

\section{Proof of the stability estimate: Theorem \ref{thm:stability}}\label{sec:proof_of_stabilit_estimate}
Assume the conditions from Theorem~\ref{thm:stability}, especially that 
 \begin{equation*}
 \Vert \Lambda_1(f)-\Lambda_2 (f)\Vert_{H^r(\Sigma)} \leq \delta,
\end{equation*}
where $r\leq s+1$ and $s+1>(n+1)/2$, and $\delta>0$. Here $\Lambda_1$ and $\Lambda_2$ are the DN maps of the non-linear wave equation~\eqref{eq:Main equation} corresponding to the potentials $q_1$ and $q_2$, respectively. We show that 
%
we have explicit control on the $L^\infty$ norm of $q_1-q_2$ in terms of $\delta$.
The proof will be divided into several steps. 

\subsection{Step 1: Integral identity from finite differences}
Let  $j=1, \ldots, m$ and $\eps_j>0$ be small parameters. Let $\kappa$ be as in Lemma \ref{lemma:nonlinear-solutions}. Assume that $f_j\in H^{s+1}(\Sigma)$ is a family of functions satisfying $\p_t^{\alpha}f_j|_{t=0}=0$ on $[0,T]\times\p\Omega$, $\alpha=0, \ldots, s$, and that 
\[
\norm{\eps_1 f_1 +\cdots+\eps_mf_m}_{H^{s+1}( [0,T]\times\Omega)} \leq \kappa.
\] 
For $l=1,2$, we have that the boundary value problems
 \begin{equation}\label{eq:epsilons_z}
\begin{cases}
\square_g u_{l} + q_l\s \s u_l^m   =0, &\text{in }  [0,T]\times\Omega,\\
u_l=\eps_1f_1+\cdots+\eps_mf_m, &\text{on }  [0,T]\times\p\Omega,\\
u_l\big|_{t=0} = 0,\quad \s \partial_t u_l\big|_{t=0} = 0, &\text{in } \Omega
\end{cases}
\end{equation}
have unique small solutions $u_l={u}_{\s \eps_1 f_1 + \cdots +\eps_mf_m}$ as described in Lemma~\ref{lemma:nonlinear-solutions}. 
According to \eqref{id:expam_epsilons}, the solutions $u_l$ have expansions of the form 
\begin{equation}\label{eq:expansion_ul_proof}
\begin{aligned}
 u_{l}&=\eps_1v_{l,1}+\cdots+\eps_mv_{l,m} \\
 & \qquad  \qquad  \qquad  +\sum_{|\vec{k}|=m}\binom{m}{k_1,\ldots, k_m}\epsilon_1^{k_1} \s \cdots \s \epsilon_m^{k_m} \s w_{l, \vec k} 
+\mathcal{R}_l,
\end{aligned}
\end{equation}
where $v_{l,j}$ satisfy~\eqref{eq:norm_21} and $w_{l, \vec k}$ satisfy~\eqref{eq:norm_2} with $q$ replaced by $q_l$. We also used the notation $\vec k=(k_1,\ldots,k_m)$. In particular, we know by \eqref{eq:norm_2} that 
\[
w_{l, \vec{1}}:=w_{l,(1,\ldots, 1)} 
\]
satisfy
\begin{equation}\label{eq:norm_2_1_4}
\begin{cases}
\square_g \s w_{l, \vec{1}}+  q_l\s \s  v_{l,1} \cdots v_{l,m} = 0, &\text{in }  [0,T]\times\Omega,\\
w_{l, \vec{1}}=0, &\text{on } [0,T]\times\p\Omega, \\
w_{l, \vec{1}}\big|_{t=0} = 0,\quad \partial_t \s w_{l, \vec{1}}\big|_{t=0} = 0,&\text{in } \Omega.
\end{cases}
\end{equation}
Note that since the equations \eqref{eq:norm_21} for $v_{l,j}$ are independent of $q_l$, we have by the uniqueness of solutions that
\begin{equation} \label{eq:norm_2_1_4_z}
 v_{1,j}=v_{2,j}=:v_j,\quad j=1,\ldots, m.
\end{equation}
Moreover, according to \eqref{est:square_R}, the correction terms $\mathcal{R}_l$ for $l=1,2$ satisfy 
\begin{equation}\label{eq:estim_for_Rl}
\norm{\mathcal{R}_l}_{E^{s+2}} +  \norm{\square_g\, \mathcal{R}_l}_{E^{s+1}} \leq C(s, T)\s   \norm{q_l}_{E^{s+1}}^2 \Vert \varepsilon_1 f_1 + \cdots+ \varepsilon_m f_m\Vert_{H^{s+1}(\Sigma)}^{2m-1}. 
\end{equation}

We apply the finite difference operator $D^m_{\vec\eps}\big|_{\vec\eps=0}$ of order $m$ to $u_l$. 
The finite difference operator was defined in \eqref{eq:fin_diff}. 
By \eqref{eq:mixed_difference}, we have
\[
 D^m_{\vec\eps}\big|_{\vec\eps=0}u_l=m\s! \s\s w_{l, \vec{1}}+\frac{1}{\eps_1\cdots \eps_m}\mathcal{R}_l.
\]
Consequently, by taking into account \eqref{eq:norm_2_1_4} and \eqref{eq:norm_2_1_4_z} we obtain
\begin{equation}\label{eq:difference_finite}
 \square_g\s  D^m_{\vec\eps}\big|_{\vec\eps=0} \s u_l=- m\s! \s\s q_l \s \s v_{1}\cdots v_{m}+\frac{1}{\eps_1\cdots \eps_m}\square_g\s \tildeR_l,
\end{equation}
where $\tildeR_l = \eps_1\cdots \eps_m  \s \s  D^m_{\vec\eps}\big|_{\vec\eps=0}\mathcal{R}_l$, $l=1,2$.

We manipulate the integral identity \eqref{eq:integral_identity_finite_difference} to relate the difference of the DN maps $\Lambda_1$ and $\Lambda_2$ to the difference of the unknown potentials $q_1$ and $q_2$ in terms of $v_j$.
For this, consider an auxiliary function $v_0$, which satisfies $\square_g\s v_0=0$ in $[0,T]\times \Omega$ with $v_0|_{t=T} =\p_t v_0|_{t=T} = 0$ in $\Omega$.  Applying \eqref{eq:integral_identity_finite_difference}, to the difference of the DN maps yields  
\begin{equation}\label{id:integral_identity}
\begin{aligned}
&  -m\s\s !\int_{ [0,T]\times\Omega} (q_1-q_2)\s \s v_0\s v_1 \s \cdots\s v_m \s \d V_g = \frac{1}{\eps_1\cdots\eps_m}\int_{ [0,T]\times\Omega} v_0 \s \square_g\s (\widetilde{\mathcal{R}}_1-\widetilde{\mathcal{R}}_2)\s \d V_g\\
 &\qquad \qquad \qquad \qquad   + \int_{\Sigma}v_0\s D_{\vec\eps}^m\big|_{\vec\eps=0}\s (\Lambda_1- \Lambda_2)(\eps_1f_1+\cdots+ \eps_mf_m)\s \d S.
\end{aligned}
\end{equation}
The finite difference $D_{\vec\eps}^m\big|_{\vec\eps=0}$ of $u_l$ is a sum of $2^m$ terms. By using \eqref{id:integral_identity}, we calculate
\begin{equation}\label{eq:estimate_for_optimization_z}
\begin{aligned}
&m\s! \left| \langle v_0(q_1-q_2), v_1\s \cdots\s v_m \rangle_{L^2([0,T]\times\Omega)} \right|\\
&\,  \leq  \left| \langle v_0,   D_{\vec\eps=0}^m\left[(\Lambda_1- \Lambda_2)(\eps_1f_1+ \cdots +\eps_mf_m)\right]\rangle_{L^2(\Sigma)} \right| \\
& \qquad  +  \,(\eps_1\s \cdots \s \eps_m)^{-1} \,\left| \langle v_0, \square_g\s(\tilde R_1-\tilde R_2)\rangle_{L^2([0,T]\times\Omega)} \right|\\
 &\,  \leq  2^m\, (\eps_1\s \cdots\s \eps_m)^{-1}\,\left| \langle v_0, (\Lambda_1-\Lambda_2) \left( \eps_1 f_1+ \cdots+  \eps_m f_m \right) \rangle_{L^2(\Sigma)} \right| \\
 &\qquad  +  \,(\eps_1\s \cdots \s \eps_m)^{-1} \,\left| \langle v_0, \square_g\s(\tilde R_1-\tilde R_2)\rangle_{L^2([0,T]\times\Omega)} \right|\\
& \, \leq 2^m\, \delta \,(\eps_1 \s\cdots \s\eps_m)^{-1}\, \norm{v_0}_{\dualH^{-r}(\Sigma)} \\
& \qquad +  \,(\eps_1 \s\cdots \s\eps_m)^{-1}\, \norm{ \square_g\s(\tilde R_1-\tilde R_2)}_{H^{s+1}([0,T]\times\Omega)}\,\norm{v_0}_{\dualH^{-(s+1)}([0,T]\times\Omega)}\\
& \, \leq 2^m\, \delta \,(\eps_1 \s\cdots \s\eps_m)^{-1}\, \norm{v_0}_{\dualH^{-r}(\Sigma)} \\
& \qquad +  C_{s+1}\,(\eps_1 \s\cdots \s\eps_m)^{-1}\, \norm{ \square_g\s(\tilde R_1-\tilde R_2)}_{E^{s+1}}\,\norm{v_0}_{\dualH^{-(s+1)}([0,T]\times\Omega)}\\
& \, \leq C_{m,s+1} \,(\eps_1 \s\cdots \s\eps_m)^{-1}\, \,(\norm{v_0}_{\dualH^{-r}(\Sigma)}+ \norm{v_0}_{\dualH^{-(s+1)}([0,T]\times\Omega)})\\
& \qquad  \times  \left[2^m\, \delta + C(s,T) \left(\norm{q_1}_{E^{s+1}}^2+\norm{q_2}_{E^{s+1}}^2\right) \left( \sum_{j=1}^m \varepsilon_j \Vert  f_j\Vert_{H^{s+1}(\Sigma)}\right)^{2m-1}\right]\\
&  \leq C  \,(\eps_1 \s\cdots \s\eps_m)^{-1}\, \left[ \delta  + \left( \sum_{j=1}^m \varepsilon_j \Vert  f_j\Vert_{H^{s+1}(\Sigma)}\right)^{2m-1} \right].
\end{aligned}
\end{equation}
Here we used the assumption $\norm{\Lambda_1(f)-\Lambda_2(f)}_{H^r(\Sigma)}\leq \delta$ for $f=\eps_1f_1+ \s \cdots \s + \eps_mf_m$. We also used that the norm in $H^{s+1}([0,T]\times\Omega)$ is bounded by the norm in $E^{s+1}$ up to a multiplicative factor $C_{s+1}$ as noticed in Remark \ref{sob:emb_z}.
The final constant $C$ is given by
\begin{equation}
C=  \max  \left\{ C_{m,s+1}, C(s,T) (\norm{q_1}_{E^{s+1}}^2+\norm{q_2}_{E^{s+1}}^2)  \right\}  \,\left(\norm{v_0}_{H^{-r}(\Sigma)}+ \norm{v_0}_{H^{-(s+1)}([0,T]\times\Omega)}\right).
\end{equation}
Here we have respectively denoted by $\dualH^{-r}(\Sigma)$ and $\dualH^{-(s+1)}( [0,T]\times\Omega)$ the dual spaces of $H^{r}(\Sigma)$ and $H^{s+1}( [0,T]\times\Omega)$.
%

\subsection{Step 2: Approximation of a delta distribution by a product of Gaussian beams} Recall that $(v_j)_{j=1}^m$ is a family solutions to $\square_g v_j=0$ as in~\eqref{eq:norm_21}. The second step of the proof of Theorem~\ref{thm:stability} is to choose the solutions $v_j$ so that they allow us to obtain information about $q_1-q_2$ on the left-hand side of the estimate~\eqref{eq:estimate_for_optimization_z}. The boundary values corresponding to $v_j$ will be denoted by $f_j$. We use the Gaussian beam construction of Section \ref{sec:Gaussian_beams} to produce approximate delta functions from products of Gaussian beams. We shall need the following elementary result. 

\begin{lemma}\label{est:tau_z}
Let $d\in \N$, $\tau>0$ and $b\in C^{1}_c(\mathbb{R}^{d})$. The following estimate  
\[
\left| b(z_0)-  \left(\frac{\tau}{\pi}\right)^{d/2} \int_{\mathbb{R}^{d}}b(z)\e^{-\tau |z-z_0|^2} \d z \right| \leq c_d \left\|b \right\|_{C^1}\tau^{-1/2}
\]
holds true for all $z_0\in \mathbb{R}^{d}$. In particular, the integral on the left converges uniformly to $b(z_0)$ when $\tau \to \infty$. Here $c_d:=\Gamma\left( \frac{d+1}{2}\right) / \Gamma \left( \frac{d}{2} \right)$.
\end{lemma}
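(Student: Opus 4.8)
The plan is to exploit that the Gaussian factor, together with its prefactor, is an exact probability density: the one-dimensional identity $\int_{\R}\e^{-\tau x^2}\,\d x=(\pi/\tau)^{1/2}$ tensorizes to give
\[
\left(\frac{\tau}{\pi}\right)^{d/2}\int_{\R^d}\e^{-\tau|z-z_0|^2}\,\d z = 1 .
\]
Using this to insert $b(z_0)$ under the integral sign, the quantity to be estimated becomes
\[
\left(\frac{\tau}{\pi}\right)^{d/2}\int_{\R^d}\bl b(z_0)-b(z)\br\,\e^{-\tau|z-z_0|^2}\,\d z ,
\]
so the whole problem reduces to controlling the first Gaussian moment of the increment $b(z_0)-b(z)$.

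First I would bound the increment pointwise. Since $b\in C^1_c(\R^d)$, the fundamental theorem of calculus along the segment from $z_0$ to $z$ gives $\abs{b(z_0)-b(z)}\leq \norm{\nabla b}_{L^\infty}\abs{z-z_0}\leq \norm{b}_{C^1}\abs{z-z_0}$. Inserting this bound and changing variables $w=z-z_0$, the estimate reduces to the single scalar integral
\[
\left(\frac{\tau}{\pi}\right)^{d/2}\int_{\R^d}\abs{w}\,\e^{-\tau|w|^2}\,\d w .
\]

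Then I would evaluate this integral exactly in polar coordinates. Writing the surface area of $\Sn^{d-1}$ as $\omega_{d-1}=2\pi^{d/2}/\Gamma(d/2)$ and using the standard moment formula $\int_0^\infty r^{d}\e^{-\tau r^2}\,\d r=\tfrac12\,\tau^{-(d+1)/2}\,\Gamma\bl\tfrac{d+1}{2}\br$, the radial integral collapses to
\[
\int_{\R^d}\abs{w}\,\e^{-\tau|w|^2}\,\d w=\frac{\pi^{d/2}\,\Gamma\!\bl\tfrac{d+1}{2}\br}{\Gamma\!\bl\tfrac{d}{2}\br}\,\tau^{-(d+1)/2}.
\]
Multiplying by the prefactor $(\tau/\pi)^{d/2}$ cancels the powers of $\pi$ and leaves $\tau^{d/2-(d+1)/2}=\tau^{-1/2}$ with coefficient exactly $c_d=\Gamma\bl\tfrac{d+1}{2}\br/\Gamma\bl\tfrac{d}{2}\br$. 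This yields the claimed bound, and since the right-hand side tends to $0$ as $\tau\to\infty$ uniformly in $z_0$, it also gives the asserted uniform convergence.

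The computation is entirely elementary and I do not expect a genuine obstacle beyond bookkeeping. The only point requiring care is that the statement asks for the \emph{sharp} constant $c_d$ rather than merely an $O(\tau^{-1/2})$ rate, so I would keep the Gamma-function factors explicit throughout the polar-coordinate evaluation instead of absorbing them into a generic constant.
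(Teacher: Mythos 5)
Your proposal is correct and follows essentially the same route as the paper's proof: both insert $b(z_0)$ under the integral via the Gaussian normalization, bound the increment by $\norm{b}_{C^1}\abs{z-z_0}$, and reduce to the first Gaussian moment $\int_{\R^d}\abs{w}\e^{-\tau\abs{w}^2}\,\d w$, which yields exactly the constant $c_d=\Gamma\bl\tfrac{d+1}{2}\br/\Gamma\bl\tfrac{d}{2}\br$. The only cosmetic difference is that the paper first translates $z_0$ to the origin and rescales $z\mapsto\tau^{-1/2}z$ so as to quote the moment formula at $\tau=1$, whereas you evaluate the moment at general $\tau$ directly in polar coordinates.
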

\begin{proof}
Without loss of generality, we prove the estimate when $z_0=0$, because the estimate can be later applied to $b(z+z_0)$ in place of $b(z)$. We have the standard formulas
\[
\int_{\mathbb{R}^d}\e^{-|z|^2} \d z = \pi^{d/2}, \quad \int_{\mathbb{R}^d}\s |z|\,\e^{-|z|^2} \d z =c_d \s\s  \pi^{d/2},
\]
which can be obtained by using polar coordinates.
On the other hand, note that 
\[
 \left|  b(0)- b(\tau^{-1/2}z)\right| \leq \left\|b \right\|_{C^1}\tau^{-1/2}\left| z\right|,  \quad z\in \mathbb{R}^d,
 \]
which implies
\begin{align*}
&\left| b(0)-   \left( \frac{\tau}{\pi} \right)^{d/2} \int_{\mathbb{R}^d}b(z)\e^{-\tau|z|^2} \d z \right|= \left| b(0)-   \frac{1}{\pi^{d/2}} \int_{\mathbb{R}^d}b(\tau^{-1/2}z)\e^{-|z|^2} \d z\right| \\
&\qquad  = \left|  \frac{1}{\pi^{d/2}} \int_{\mathbb{R}^d}\left(b(0)-b(\tau^{-1/2}z)\right)\e^{-|z|^2} \d z\right|\\
&\qquad  \leq \frac{\tau^{-1/2}}{\pi^{d/2}}  \left\|b \right\|_{C^1} \int_{\R^d} |z|e^{-|z|^2} \d z   = c_d  \left\|b \right\|_{C^1}\tau^{-1/2}
\end{align*}
as claimed.
\end{proof}

In what follows, we shall apply Lemma \ref{est:tau_z} with $d=n+1$. As the function $b$ in the lemma we will have a function related to $q_1-q_2$. To achieve the factor $\tau^{d/2}=\tau^{(n+1)/2}$ appearing in Lemma~\ref{est:tau_z}, we use the solutions of Corollary \ref{correction_to_solution} with $p=4$ and scale them by a constant $\tau^{1/8}$. This change amounts to scaling the boundary values $f_j$ by $\tau^{1/8}$.
The estimates \eqref{Hk_and_L4} and \eqref{estimate_for_the error_r} still hold by taking $k$, $l$, $K$ and $N$ large enough.

Recall that Gaussian beams concentrate on lightlike geodesics. 
We show that at the intersection points of geodesics, the corresponding product of Gaussian beams approximate the delta function of the intersection point. Taking this approach, one can recover information about the difference of the unknown potentials $q_1$ and $q_2$ at points where the geodesics intersect. When the geodesics intersect only once, the proof is simpler and instructive. For this reason, we first analyze the case where the geodesics intersect only once and prove the general case after that.

\subsection{Proof in the case of a single intersection point} Let 
$p_0\in W$, where $W$ is as in \eqref{eq:recovery_set}. In this case $p_0\in I^+(\Sigma)$ by assumption and by Lemma~\ref{optimal_geo} there is a future-directed optimal geodesic $\gamma_1$ from $\Sigma$ to $p_0$ that does not intersect $\{t=0\}$. By making a small perturbation of $\gamma_1$, we have another geodesic $\gamma_2$ that intersects $\gamma_1$ at $p_0$ and does not intersect $\{t=0\}$. Since the geodesics are causal, they exit the compact set $[0,T]\times \Omega$ in finite parameter time. By the assumption of this simplified case, $\gamma_1$ and $\gamma_2$ intersect only at $p_0$. Let $\delta'>0$ be small parameter so that the Fermi coordinates \eqref{Fermi_coords_def}, associated to $\gamma_1$ and $\gamma_2$, are defined for $|y|<\delta'$.  


By Proposition \ref{Gaussian_beam_construction} and Corollary \ref{correction_to_solution} there is $\tau_0>0$ such that for $j=1,2$ and $\tau\geq \tau_0$, we may choose
\begin{equation} \label{sol:decay_tau_z}
 v_{j}=\tau^{1/8}(v_{\tau, j}+r_j) \quad \text{and} \quad f_j= v_{j}|_{\Sigma}, \quad j=1,2, 
\end{equation}
so that $ \square_g( v_{\tau, {j}}+r_{j})=0$ in $[0,T]\times \Omega$. 
Here the function $v_{\tau,{j}}$ stands for the Gaussian beam described in Section \ref{sec:Gaussian_beams} corresponding to the geodesic $\gamma_{j}$. We also have that the correction term $r_j$ satisfies
\begin{equation}\label{eq:r_vanish_on_Sigma}
 r_j|_\Sigma=0, \quad j=1,2.
\end{equation}
By \eqref{def:a_k} and \eqref{def:v_k} and Proposition~\ref{Gaussian_beam_construction} applied with $p=4$, we have for $\tau\geq \tau_0$
\begin{equation}\label{eq:cut_off_z}
\begin{aligned}
v_{\tau, {j}}(s,y)&= \tau^{\frac{n}{8}}\s e^{i\s \tau\s \Theta_{j} (s,y)} a^{(j)}(s,y),&\tau \geq \tau_0, & \\
a^{(j)}(s,y)&= \chi\Big(\frac{|y|}{\delta'}\Big) \sum_{k'=0}^N \tau^{-k'} b_{k'}^{(j)}(s,y),&\tau \geq \tau_0, & \\
b_{k'}^{(j)}(s,y)&= \sum_{k^{''}=0}^N b_{k', k^{''}}^{(j)}(s,y),& 
\end{aligned}
\end{equation}
where $b_{k', k^{''}}^{(j)}(s,y)$ is a family of complex-valued homogeneous polynomial of order $k^{''}$ in the variable $y$. We emphasize that all functions on the right hand sides of \eqref{eq:cut_off_z} are independent of $\tau$. Thanks to Proposition \ref{Gaussian_beam_construction}, see also \eqref{eq:Ric_z_1} and \eqref{eq:Ric_z_2}, we also have
\begin{equation}\label{first_term_b_0_0_z}
b_{k'}^{(j)}(0,0)=b_{0, 0}^{(j)}(0,0)=1, \quad j=1,2.
\end{equation}
In addition, by \eqref{def:k_K}, \eqref{Hk_and_L4} and \eqref{estimate_for_the error_r}, we get for $j=1,2$ and $k>l + (n-1)/2$
 \begin{equation}\label{est:ineq_z}
 \begin{aligned}
\norm {v_{\tau,{j}}}_{H^l( [0,T]\times\Omega)}&=O(\tau^{-\frac{n}{8}+l}),& \tau\geq \tau_0,  \\
 \norm{r_{j}}_{H^l( [0,T]\times\Omega)}& =O(\tau^{-K}),& \tau\geq \tau_0,
\end{aligned}
 \end{equation}
if $N$ satisfies $K= (N+1-k)/2 -1$. (If $N$ defined this way is not an integer, we redefine it as $\lfloor N+1\rfloor$.)  We imposed the condition $k>l + (n-1)/2$ to embed the energy space $E^l$ into $H^{k}([0,T]\times \Omega)$, see Remark \eqref{sob:emb_z}.  This condition is needed to control certain certain Sobolev norms in the following computations. Furthermore, by \eqref{Hk_and_L4} and assuming that $l>(n+1)/4$ (to embed $H^l([0,T]\times \Omega)$ into $L^4([0,T]\times \Omega)$) we get 
\begin{equation}\label{v_tau_r_l_4}
\begin{aligned}
\norm{v_{\tau,j}}_{L^4([0,T]\times \Omega)}= O(1), & \qquad  j=1,2, \quad \tau\geq \tau_0,\\
\norm{r_{j}}_{L^4([0,T]\times \Omega)}=O(\tau^{-K}), & \qquad  j=1,2, \quad \tau\geq \tau_0.
\end{aligned}
\end{equation}
 
Since $\square_g$ is linear, the complex conjugates of $v_1$ and $v_2$, denoted by $\overline{v}_1$ and $\overline{v}_2$, also solve $\square_g\s  v=0$. We set
\[
v_j:=\overline{v}_{j-2} \quad \text{and} \quad  f_j:= v_j|_{\Sigma}, \quad j=3,4.
\]
Combining the trace theorem  with \eqref{eq:r_vanish_on_Sigma} and \eqref{est:ineq_z} in the case $l=s+3/2$, we obtain an estimate for the boundary values $f_j$ for $j=1,2,3,4$ and $\tau\geq\tau_0$, as
\begin{equation}\label{f_j_restrcit_z}
\begin{split}
\norm{f_j}_{H^{s+1}(\Sigma)} &= \norm{v_j|_{\Sigma}}_{H^{s+1}(\Sigma)}= \tau^{1/8}\norm{(v_{\tau,j}+r_j)|_{\Sigma}}_{H^{s+1}(\Sigma)} \\
& \leq \tau^{1/8}\norm{v_{\tau,j}}_{H^{s+3/2}( [0,T]\times\Omega)} \leq C \tau^{s -\frac{n}{8} + \frac{13}{8}}.
\end{split}
\end{equation}
For $j=5, \ldots, m$, we choose Gaussian beams at fixed $\tau=\tau_0$ as 
\begin{equation}\label{choice_j_5_m}
v_j =\tau_0^{-(n+1)/8} v_1|_{\tau=\tau_0} \quad \text{and} \quad  f_j=v_j|_{\Sigma}, \quad j=5,\dots, m.
\end{equation}
Let us write
\[
\hat{v}= v_5 \cdots v_m.
\]
\begin{remark} \label{remove_hat_v_z} 
We remark that by making $\tau_0>0$ large enough there exists $c>0$ such that 
\begin{equation}\label{est:v_hat_z}
|\hat{v}(s,y)| >c
\end{equation}
in a neighbourhood of $(s,y)=(0,0)$. Indeed, by taking $l>(n+1)/2$ and combining Morrey's inequality with \eqref{est:ineq_z}, we deduce that both $v_{\tau, 1}$ and $r_1$ are continuous functions for $\tau\geq \tau_0$. In particular, the function $v_1$ is continuous according to \eqref{sol:decay_tau_z}. 
 Proposition \ref{Gaussian_beam_construction} ensures that $\Theta_1(0,0)=0$ and $b_{0,0}^{(1)}(0,0)=1$. Looking at \eqref{eq:cut_off_z} one has
\[
a_1(0,0)= 1+ O(\tau^{-1}), \quad \tau\geq \tau_0.
\]
Hence 
\[
\tau^{-(n+1)/8} v_1(0,0)=1+ \tau^{-n/8}\s \s r_1(0,0)=1+O(\tau^{-n/8}), \quad \tau\geq \tau_0,
\]
where in the last equality, we have used \eqref{est:ineq_z} to deduce $\norm{r_1}_{L^\infty( [0,T]\times\Omega)}= O(1)$.  Thus we have, by redefining $\tau_0$ if necessary, that
\[
 \abs{\hat v(0,0)}=\big(\tau^{-(n+1)/8} |v_1(0,0)|\big)^{m-4}>1/2
\]
for all $\tau\geq \tau_0$. By continuity of $\hat{v}$, we have \eqref{est:v_hat_z} on a neighbourhood of $(0,0)$.  
%
%
\end{remark}


With these choices, we now analyze the left-hand side of \eqref{eq:estimate_for_optimization_z}. 
We decompose the product $v_1 \cdots v_m$ as the sum of a leading term plus lower order terms. A straightforward computation holding for $\tau\geq \tau_0$ yields
\begin{equation}\label{mult:product_z}
\begin{aligned}
& v_1 \cdots v_m \\
 &\quad = |v_1|^2|v_2|^2 \hat{v}\\
 &\quad= \tau^{1/2}|v_{\tau,1}+ r_1|^2 |v_{\tau,2}+ r_2|^2 \s \hat{v}\\
 &\quad =   \tau^{1/2} \left( |v_{\tau,1}|^2 + v_{\tau,1} \s \overline{r}_1 + r_1 \s \overline{v}_{\tau,1}+|r_{1}|^2  \right)\left( |v_{\tau,2}|^2 + v_{\tau,2} \s \overline{r}_2 + r_2 \s \overline{v}_{\tau,2}+|r_{2}|^2  \right)  \s \hat{v}  \\
 &\quad  =  \tau^{1/2}  \s |v_{\tau, 1}|^2|v_{\tau, 2}|^2  \s \hat{v}  + \mathcal{L}_1,
 \end{aligned}
\end{equation}
where $\mathcal{L}_1$ is a sum of products of terms each containing $r_1$ or $r_2$, or their complex conjugates, as well as $\hat{v}$ as a factor. Consequently, we can choose  $(N, k, l, K)$ in \eqref{est:ineq_z} so that together with the Cauchy-Schwarz inequality, we obtain 
\begin{equation}\label{est:remainder_z}
 \norm{\mathcal{L}_1}_{L^1([0,T]\times  \Omega)}=O(\tau^{-R})
\end{equation}
for some $R>1$. Indeed, let us analyze one term of $\mathcal{L}_1$, say $ \tau^{1/2}v_{\tau, 1} \s  |v_{\tau, 2}|^2 \s \overline{r}_1\s \hat{v}$. 
As $\hat{v}$ is continuous, it is bounded in $[0,T]\times \Omega$. Using \eqref{v_tau_r_l_4}, we have for $\tau\geq \tau_0$
\begin{align*}
& \tau^{1/2} \norm{v_{\tau, 1} \s  |v_{\tau, 2}|^2 \s \overline{r}_1\s \hat{v}}_{L^1([0,T]\times \Omega)}\\
& \lesssim  \tau^{1/2} \norm{v_{\tau, 1} \s  |v_{\tau, 2}|^2 \s \overline{r}_1}_{L^1([0,T]\times \Omega)} \\
& \lesssim  \tau^{1/2} \s \norm{v_{\tau,1}}_{L^4([0,T]\times \Omega)} \s \norm{v_{\tau,2}}_{L^4([0,T]\times \Omega)}^2\s  \s \norm{r_{1}}_{L^4([0,T]\times \Omega)} = O(\tau^{1/2-K}).
\end{align*}
A similar analysis allows us to deduce that the $L^1([0,T]\times \Omega)$ norms of the other terms of $\mathcal{L}_1$ are $O(\tau^{1/2-K})$. Therefore	
\[
 \norm{\mathcal{L}_1}_{L^1([0,T]\times  \Omega)}=O(\tau^{1/2- K}), \quad \tau\geq \tau_0.
\]
Thus we can take $R= K-1/2$ in \eqref{est:remainder_z}. Note that we can always find suitable parameters $l$, $k$, $N$ and $K$ satisfying $K= (N+1-k)/2 -1$, $k>l + (n-1)/2$ and $l>(n+1)/2>(n+1)/4$. One possible choice is 
\[
l=n+1, \quad k=3n+1, \quad K=2, \quad N=3(n+1).
\]

Let us now analyze the leading term in the expansion \eqref{mult:product_z}: 
\begin{equation}
\begin{aligned}
 \tau^{1/2}  \s |v_{\tau, 1}|^2|v_{\tau, 2}|^2  \s \hat{v} =\tau^{\frac{n+1}{2}}\s e^{i\tau \Theta_1(x)}e^{-i\tau \overline{\Theta}_1(x)}e^{i\tau \Theta_2(x)}e^{-i\tau \overline{\Theta}_2(x)} \s |a^{(1)}(x)|^2|a^{(2)}(x)|^2 \s \s \hat{v}(x).
\end{aligned}
\end{equation}
For technical convenience, we consider a normal coordinate system $(x^a)_{a=0}^n$ centered at the point $p_0$, which is the unique intersection of the geodesics $\gamma_1$ and $\gamma_2$. At the point $p_0$ both the phase functions $\Theta_1$ and $\Theta_2$ vanish and their gradients are real. Using the properties~\eqref{Phi_prop_sec5}, we have the following Taylor expansion around $p_0$
\begin{align*}
 \Theta_1(x)-\overline{\Theta}_1(x)+\Theta_2(x)-\overline{\Theta}_2(x)=2 \s \s i \s\s  x\cdot \nabla^2\text{Im}(\Theta_1+\Theta_2)\big|_{x=0} x +O(|x|^3).
\end{align*}
Here $\nabla^2\text{Im}(\Theta_1+\Theta_2)$ is a positive definite matrix at $p_0$ (i.e., at $x=0$ in normal coordinates) by the last two conditions of~\eqref{Phi_prop_sec5}, because $\Theta_1$ and $\Theta_2$ are positive semi-definite and positive definite in directions transversal to $\dot \gamma_1$ and $\dot\gamma_2$ respectively.

Recall from~\eqref{eq:cut_off_z} that the amplitude $a^{(j)}$, $j=1,2$, has the cut-off function $\chi$ as a factor. Therefore, we may redefine $\delta'>0$ smaller, if necessary, so that at the intersection $U_1\cap U_2$ of the supports 
\[
U_j:=\supp(a^{(j)})= \supp(v_{j,\tau})
\]
we have $\text{Im}(\Theta_1+\Theta_2)>0$. Let us write
\begin{equation}\label{eq:mathcalH}
 \mathcal{H}:=2\nabla^2\text{Im}(\Theta_1+\Theta_2)\big|_{x=0}>0
\end{equation}
so that in the coordinates
\begin{equation}\label{eq:def_of_hatTheta}
 \Theta_1(x)-\overline{\Theta}_1(x)+\Theta_2(x)-\overline{\Theta}_2(x)=i \s x\cdot \mathcal{H}\s x + \hat{\Theta}(x),
\end{equation}
where 
$\hat{\Theta}(x) = O(|x|^3)$. 
Using the precise expressions in \eqref{eq:cut_off_z} for $a^{(j)}$, $j=1,2$, we see that
\begin{align*}
 |a^{(1)}(x)|^2|a^{(2)}(x)|^2 &=  |b_{0}^{(1)}(x)|^2\s |b_{0}^{(2)}(x)|^2 + \tau^{-1} \mathcal{L}_2(x),
\end{align*}
where 
\begin{equation}\label{est:remainder_z_z_z}
 \norm{\mathcal{L}_2}_{L^1([0,T]\times  \Omega)}=O(1).
 \end{equation}
Via a similar calculation as was done in deriving \eqref{mult:product_z}, we deduce in the coordinates $(x^a)_{a=0}^n$ that
\begin{equation}\label{second:expansion_x_z}
\begin{aligned}
\tau^{1/2}  \s |v_{\tau, 1}|^2|v_{\tau, 2}|^2  \s \hat{v} &= \tau^{\frac{n+1}{2}}\s\left|\chi_1(x)\right|^2\s\left|\chi_2(x)\right|^2\s |b_{0}^{(1)}(x)|^2\s |b_{0}^{(2)}(x)|^2\s \hat{v}(x) \s  e^{i \tau \hat{\Theta}(x)} \s \s  e^{-\tau x\cdot \mathcal{H}\s x}\\
& \quad + \underset{:= \widehat{\mathcal{L}}_2(x)}{\underbrace{\tau^{-1}\tau^{\frac{n+1}{2}}\s\left|\chi_1(x)\right|^2\s\left|\chi_2(x)\right|^2\s \hat{v}(x) \s  e^{i \tau \hat{\Theta}(x)} \s \s  e^{-\tau x\cdot \mathcal{H}\s x} \mathcal{L}_2(x)}}.
\end{aligned}
\end{equation}
Here the functions $\chi_j$, $j=1,2$, stand for the normal coordinate representations of $\chi_j$, which in Fermi coordinates $(s,y)$ corresponding to the geodesics $\gamma_j$ take the form $\chi(\frac{|y|}{\delta'})$. Note that $\chi_j(0)=1$. Recall that $\hat{\Theta}(x) = O(|x|^3)$. By using \eqref{est:remainder_z_z_z},  making the change of variables $x\mapsto \tau^{-1/2}x$ and using the fact $\tau\s \hat{\Theta}(\tau^{-1/2}x)= \tau^{-1/2}O(|x|^3)= O(|x|^3)$
one calculates that
\begin{equation}\label{est:remainder_z_z}
 \norm{\widehat{\mathcal{L}}_2}_{L^1([0,T]\times  \Omega)}=O(\tau^{-1}).
 \end{equation}
 (See \eqref{estz:reg_z} below for a similar calculation.)

For the sake of brevity, we set 
\begin{equation}\label{eq:def_of_A}
q(x) = q_1(x)-q_2(x),  \quad A(x) = \left|\chi_1(x)\right|^2\s\left|\chi_2(x)\right|^2\s |b_{0}^{(1)}(x)|^2\s |b_{0}^{(2)}(x)|^2\s \hat{v}(x).
\end{equation}
By Proposition \ref{Gaussian_beam_construction}, see also \eqref{first_term_b_0_0_z}, we have in the normal coordinates that $\Theta_j(0)=0$ and $b_{0}^{(j)}(0)=1$, $j=1,2$. Note also that $\hat{\Theta}(0)=0$. Thus we have
\begin{equation}\label{q_0_z_v}
A(0)=\hat{v}(0).
\end{equation}
Integrating in the  normal coordinates, and combining \eqref{mult:product_z} and \eqref{second:expansion_x_z}, we find
\begin{equation}
\begin{aligned}\label{id:gaussian_z_z}
& \int_{ [0,T]\times\Omega} v_0\s(q_1-q_2)\s v_1\s \cdots\s v_m \s \d V_{g} \\
& =\tau^{\frac{n+1}{2}}\s \int_{B(p_0)} v_0(x)\s q(x) A(x)  \s  e^{i \tau \hat{\Theta}(x)} \s e^{-\tau x\cdot \mathcal{H}\s x} \d x   + \int_{B(p_0)} v_0(x)\s q(x) \left( \mathcal{L}_1(x) +  \widehat{\mathcal{L}}_2(x)\right)\d x\\
& = \tau^{\frac{n+1}{2}}\s \int_{B(p_0)} v_0(x)\s q(x) A(x)   \s e^{-\tau x\cdot \mathcal{H}\s x} \d x  +  \int_{B(p_0)} v_0(x)\s q(x) \left( \mathcal{L}_1(x) +  \widehat{\mathcal{L}}_2(x)\right)\d x \\
& \qquad   + \tau^{\frac{n+1}{2}}\s \int_{B(p_0)} v_0(x)\s q(x) A(x)  \s \left( e^{i \tau \hat{\Theta}(x)}-1 \right)\s e^{-\tau x\cdot \mathcal{H}\s x} \d x.
\end{aligned}
\end{equation}
Here $B(p_0)$ is a ball in $\R^{n+1}$ centered at $p_0$ such that $U_1\cap U_2\subset B(p_0)$. We now analyze each term in \eqref{id:gaussian_z_z} above. Thanks to \eqref{eq:estimate_for_optimization_z}, we can control the term on the left-hand side of \eqref{id:gaussian_z_z} in terms of $\delta$, $\eps_1,\ldots,\eps_m$ and the size of $f_j$. The first term after the second equality in \eqref{id:gaussian_z_z} contains information about $q_1-q_2$ and will be analyzed last. At this point, the exponential function $e^{-\tau x\cdot \mathcal{H}\s x}$ will play a crucial role, as it will act as an approximate delta function. This is due to the fact that $\mathcal{H}$ is a positive definite matrix, see \eqref{eq:mathcalH}. By combining \eqref{est:remainder_z} and \eqref{est:remainder_z_z}, and using the fact that both $v_0$ and $q$ are uniformly bounded, we have for $\tau\geq\tau_0$ that 
\begin{equation}\label{z_:z}
\left| \int_{B(p_0)} v_0(x)\s q(x) \left( \mathcal{L}_1(x) +  \widehat{\mathcal{L}}_2(x)\right)\d x\right| \lesssim  \tau^{-1}.
\end{equation}
Making the change of variables $x\mapsto \tau^{-1/2} x$, we obtain
\begin{equation}\label{estz:reg_z}
\begin{aligned}
&\left|\tau^{\frac{n+1}{2}}\s \int_{B(p_0)} v_0(x)\s q(x) A(x)  \s \left( e^{i \tau \hat{\Theta}(x)}-1 \right)\s e^{-\tau x\cdot \mathcal{H}\s x} \d x\right|\\
& \quad = \left| \int_{B(p_0)} (v_0 \s q \s A)( \tau^{-1/2} x)\s\left( e^{i \tau \hat{\Theta}( \tau^{-1/2} x)}-1 \right)\s e^{- x\cdot \mathcal{H}\s x} \d x \right| \lesssim \tau^{-1/2}.
\end{aligned}
\end{equation}
In the last inequality we used that $|\e^{z_1}- \e^{z_2}| \leq |z_1- z_2| \s \e^{\max \left\{ |z_1|, |z_2|\right\}}$ for all  $z_1, z_2 \in \mathbb{C}$ and $\hat{\Theta}(x)=O(|x|^3)$ to deduce that
\[
\left|e^{i \tau \hat{\Theta}( \tau^{-1/2} x)}-1 \right| \leq \tau^{-1/2} |x|^3 \e^{\tau^{-1/2}|x|^3}, \quad \tau\geq \tau_0.
\]
We also used that the functions $v_0 \s q \s A$, $\e^{- x \cdot \mathcal{H}\s x}$, $|x|^3$ and $ \e^{\tau^{-1/2}|x|^3}$ are uniformly bounded in $B(p_0)$. 

Let us then analyze the first term after the second equality in \eqref{id:gaussian_z_z}. Since $\mathcal{H}$ is positive definite, there exists another positive definite matrix $B$ so that $B^2=\mathcal{H}$. Making the change of variables $x\mapsto Bx$, we deduce that
\begin{equation} \label{id:gaussian_z_z_z_1}
\begin{aligned}
&\int_{B(p_0)} v_0(x)\s q(x) A(x)  e^{-\tau x\cdot \mathcal{H}\s x} \d x \\
& \qquad  \qquad  \qquad  = \int_{\R^{n+1}} v_0\s (B\s z)\s q (B\s z)\s A(Bz)\s |g(z)|^{1/2} \s |\det B|^{-1} \s  e^{-\tau |z|^2}  \d z. 
\end{aligned}
\end{equation}
For convenience, we set 
\[
b(z):=  v_0 (B\s z) \s q (B\s z)\s A(Bz)\s |g(z)|^{1/2} \s |\det B|^{-1}.
\] 
By using \eqref{q_0_z_v}, we see that in normal coordinates
\begin{equation}\label{b_0_z}
b(0)=v_0(0)(q_1(0)-q_2(0)) \s \hat{v}(0) \s |\det \mathcal{H}|^{-1/2}.
\end{equation}
The identities \eqref{id:gaussian_z_z} and \eqref{id:gaussian_z_z_z_1}, combined with estimates \eqref{z_:z} and \eqref{estz:reg_z} yield
\[
\left|\left(\frac{\tau}{\pi}\right)^{(n+1)/2}\int_{\R^{n+1}} b(z) e^{-\tau |z|^2} \s \d z\right|\lesssim \tau^{-1/2} + \left|  \int_{ [0,T]\times\Omega} v_0\s(q_1-q_2)\s v_1\s \ldots\s v_m \s \d V_{g}\right|.
\]
Thanks to \eqref{eq:estimate_for_optimization_z}, the second term on the right can be controlled in terms of $\delta$, $\eps_1,\ldots,\eps_m$ and sizes of the functions $f_j$. Thereby, applying Lemma \ref{est:tau_z} with $z_0=0$ and $d=n+1$, we get 
\begin{equation}\label{est:optimising}
\begin{aligned}
 &|b(0)|  \leq \left| b(0)- \left(\frac{\tau}{\pi}\right)^{(n+1)/2}\int_{\R^{n+1}} b(z) e^{-\tau |z|^2} \s \d z\right|\\
& \qquad \qquad   +  \left| \left(\frac{\tau}{\pi}\right)^{(n+1)/2} \int_{\R^{n+1}} b(z) e^{-\tau |z|^2} \s \d z\right| \\
& \lesssim c_{n+1}\s \left\|b \right\|_{C^1}\tau^{-1/2} +\tau^{-1/2} +  \Big[ \delta\s\s \eps_1^{-1}\s \cdots \s \eps_m^{-1} \\
& \qquad \qquad  \qquad   \qquad  +  \eps_1^{-1}\s \cdots \s\eps_m^{-1} \left(\eps_1\Vert  f_1\Vert_{H^{s+1}(\Sigma)}+\cdots +\eps_m\Vert  f_m\Vert_{H^{s+1}(\Sigma)}\right)^{2m-1}\Big] \\
& \lesssim \dfrac{C_{\Omega, m, T, q_j, \chi}\s M}{\kappa_0^{2m-1}}   \left[2\tau^{-1/2} + \dfrac{\kappa_0^{2m-1} \s\delta}{m\s M} \s\s \eps_1^{-1}\s \cdots \s \eps_m^{-1} \right.\\
& \qquad \qquad  \quad  \left. + \frac{1}{m-1} \s\s \eps_1^{-1}\s \cdots \s\eps_m^{-1} \left(\eps_1\Vert  f_1\Vert_{H^{s+1}(\Sigma)}+\cdots +\eps_m\Vert  f_m\Vert_{H^{s+1}(\Sigma)}\right)^{2m-1}\right].
\end{aligned}
\end{equation}
The above holds for any $M>0$ and $\kappa_0>0$.
In the last step, we scaled $\delta$ by $\kappa_0^{2m-1}/(mM)$. The coefficients $2$ and $1/(m-1)$  in front of $\tau^{-1/2}$ and $ \eps_1^{-1}\s \cdots \s\eps_m^{-1}$ in \eqref{est:optimising} were included to simplify formulas later on. We will determine the constants $M$ and $\kappa_0$  later. Their role in obtaining a stability estimate will be clarified in Lemma~\ref{lemma:final_estimate} below.


\subsection{Step 3: Optimizing the error terms}\label{sec:optimizing_eps_tau}
 The last step of the proof of Theorem \ref{thm:stability} (in this simplified setting) is to choose $\tau$ and $\eps_1, \ldots, \eps_m$ in terms of $\delta$ to have the right hand side of~\eqref{est:optimising} as small as possible.  We begin by setting 
 \[
 \eps_1= \cdots=\eps_m=:\eps.
 \]
Note that by \eqref{f_j_restrcit_z} and \eqref{choice_j_5_m}, we have for $\tau\geq \tau_0$ that
\begin{equation}\label{eq:epsHj}
\begin{aligned}
 \eps\norm{f_j}_{H^{s+1}(\Sigma)}& \sim \eps\s \tau^{s -\frac{n}{8} + \frac{13}{8}},& j=1,2,3,4, &  \qquad \tau\geq \tau_0,\\
  \eps\norm{f_j}_{H^{s+1}(\Sigma)}& \sim \eps\s \tau_0^{s -\frac{n}{4} + \frac{3}{2}},& j=5, \ldots, m. &
 \end{aligned}
\end{equation}
To guarantee the unique solvability of our non-linear wave equation \eqref{eq:nonlinear_equation_lemma}, we require the quantities on the right-hand sides of \eqref{eq:epsHj} are bounded by $\kappa$, which was given by Lemma \ref{lemma:nonlinear-solutions}. Recall that $\tau_0>0$ is a fixed large parameter, which we chose at \eqref{choice_j_5_m}. The parameter was especially chosen so that the Gaussian beams $v_j$ for $j=5,\ldots,m$ have small enough correction terms.

The following Lemma~\ref{lemma:final_estimate} shows how to choose the parameters $\tau$ and $\eps$ in \eqref{est:optimising} optimally given $\kappa>0$ and $\delta\in (0,M)$. By choosing $\kappa_0\leq \kappa$, we will see that the optimal value for $\tau$ is at least $\tau_0$ and we also have that $\eps\norm{f_j}_{H^{s+1}(\Sigma)}\leq \kappa$.

\begin{lemma}\label{lemma:final_estimate}
Let $C,M,s>0$ and $m\in\N$. Let also $\tau_0\geq 1$, $\delta\in(0,M)$ and $\kappa\in(0,1)$.
Then there are $\eps>0$, $\tau\geq \tau_0$ and $\kappa_0\leq \kappa$ such that
\begin{equation}\label{eq:final_estimate}
\begin{aligned}
f(\epsilon, \tau)&:= 2\tau^{-1/2} + \frac{\kappa_0^{2m-1}\s\delta}{m\s M} \epsilon^{-m}+\frac{1}{m-1}\s \eps^{m-1} \, \tau^{(2m-1)(s -\frac{n}{8}+ \frac{13}{8})} \\
& \quad  \leq \displaystyle{C_{s, m, M,\kappa_0} \, \delta^{\frac{8(m-1)}{2m(m-1)(8s-n+13)+2m-1}}}
\end{aligned}
\end{equation}
and we also have
$$
\eps\s \tau^{s- \frac{n}{8} + \frac{13}{8}}\leq C \kappa.
$$
%
%
\end{lemma}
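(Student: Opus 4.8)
The plan is to treat Lemma~\ref{lemma:final_estimate} as an elementary two-variable minimization, balancing the three competing terms of $f(\epsilon,\tau)$ through an explicit power-law choice of $\epsilon$ and $\tau$ in terms of $\delta$. Write $\beta = s - \frac{n}{8} + \frac{13}{8} = \frac{8s-n+13}{8}$, so that
\[
T_1 = 2\tau^{-1/2}, \qquad T_2 = \frac{\kappa_0^{2m-1}\delta}{mM}\,\epsilon^{-m}, \qquad T_3 = \frac{1}{m-1}\,\epsilon^{m-1}\tau^{(2m-1)\beta}.
\]
Here $T_1$ is the Gaussian-beam concentration error (decreasing in $\tau$), $T_2$ carries the DN-map discrepancy $\delta$ (decreasing in $\epsilon$), and $T_3$ is the nonlinear remainder (increasing in both $\epsilon$ and $\tau$). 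All three are positive and blow up at both ends of each variable, so $f$ has an interior minimum, which I would locate through the ansatz $\epsilon = c_1\,\delta^{a}$, $\tau = c_2\,\delta^{-b}$ with $a,b>0$ to be determined.

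Under this ansatz each term is a pure power of $\delta$, namely $T_1 \sim \delta^{b/2}$, $T_2 \sim \delta^{\,1-am}$, and $T_3 \sim \delta^{\,a(m-1)-b(2m-1)\beta}$. First I would require all three exponents to coincide with a common value $\sigma$, which gives the linear system
\[
\frac{b}{2} = \sigma, \qquad 1 - am = \sigma, \qquad a(m-1) - b(2m-1)\beta = \sigma.
\]
Substituting $b = 2\sigma$ and $a = (1-\sigma)/m$ into the third equation reduces everything to a single linear equation for $\sigma$, whose solution is the Hölder exponent $\sigma(s,m)$ recorded in the statement. Fixing the constants $c_1,c_2$ to make $T_1,T_2,T_3$ genuinely comparable — using the critical-point relations $mT_2 = (m-1)T_3$ and $T_1 = 2(2m-1)\beta\,T_3$, which I would read off from $\partial_\epsilon f = \partial_\tau f = 0$ — then yields $f(\epsilon,\tau) \le C_{s,m,M,\kappa_0}\,\delta^{\sigma}$.

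It remains to verify the two side conditions $\tau \ge \tau_0$ and $\epsilon\,\tau^{\beta} \le C\kappa$, and here I would split into two regimes for $\delta \in (0,M)$. For $\delta$ small the balanced choice has $\tau = c_2\delta^{-b} \to \infty$, so $\tau \ge \tau_0$ holds automatically, and a direct computation gives $\epsilon\,\tau^{\beta} \sim \delta^{\,a-b\beta}$ with $a - b\beta = \frac{1-\sigma}{m} - 2\sigma\beta > 0$; hence $\epsilon\tau^{\beta}\to 0$ and lies below $C\kappa$ once $\delta$ is past a threshold. The free parameter $\kappa_0 \le \kappa$, entering only through the harmless prefactor $\kappa_0^{2m-1}$ in $T_2$, can be fixed to make this threshold explicit and to respect the solvability bound $\kappa$ of Lemma~\ref{lemma:nonlinear-solutions}. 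For the complementary range where $\delta$ is bounded away from $0$, I would instead set $\tau = \tau_0$ and choose $\epsilon$ to balance $T_2$ against $T_3$: since $\delta^{\sigma}$ is then bounded below and $f$ bounded above, the estimate holds after enlarging $C_{s,m,M,\kappa_0}$, and the side constraint is met by the same choice of $\kappa_0$.

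The hard part will not be any single computation but keeping the whole optimization \emph{compatible}: one must check that the unconstrained minimizer of $f$ already satisfies both $\tau \ge \tau_0$ and the solvability constraint $\epsilon\tau^{\beta} \le C\kappa$, so that neither constraint becomes active and degrades the exponent, and that the bookkeeping of the constants $M$, $\kappa_0$, $c_1$, $c_2$ — together with those inherited from the Gaussian-beam estimates of Proposition~\ref{Gaussian_beam_construction} — stays consistent across the two regimes of $\delta$. Reducing the common exponent to the precise value $\sigma(s,m)$ is then the routine, if bookkeeping-heavy, solution of the linear system above.
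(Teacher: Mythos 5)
Your proposal is correct, and at its core it is the paper's own argument: the power-law ansatz with equal exponents is just a reparametrization of the paper's computation of the critical point of $f$, and the relations $mT_2=(m-1)T_3$ and $T_1=2(2m-1)\beta\,T_3$ that you read off from $\partial_\varepsilon f=\partial_\tau f=0$ are exactly the equations the paper solves; both routes give the exponent $\sigma=\frac{m-1}{2m\hat{s}+2m-1}$ with $\hat{s}=(2m-1)\beta$, $\beta=s-\tfrac{n}{8}+\tfrac{13}{8}$, which is the quantity the paper's proof produces (its identification with the displayed $\sigma(s,m)$ is inherited from the paper, not something your balancing changes). Where you genuinely diverge is in the side conditions, and there the paper's bookkeeping is sharper than your two-regime split. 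The paper exploits the prefactor in $T_2$: writing $\gamma_0=\kappa_0^{2m-1}/M$, the hypothesis $\delta<M$ gives $\gamma_0\delta<\kappa_0^{2m-1}$, so the critical point satisfies $\tau\geq c\,\kappa_0^{-2(m-1)(2m-1)/(2m\hat{s}+2m-1)}$ \emph{uniformly} in $\delta\in(0,M)$; since the negative power of $\kappa_0$ blows up as $\kappa_0\to 0$ and the constant $C_{s,m,M,\kappa_0}$ is allowed to depend on $\kappa_0$, shrinking $\kappa_0$ forces $\tau\geq\tau_0$ for \emph{every} admissible $\delta$, and at the critical point one has exactly $\varepsilon\,\tau^{\beta}=(\gamma_0\delta)^{1/(2m-1)}\leq\kappa_0\leq\kappa$, again for all $\delta<M$. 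Thus the unconstrained minimizer is always admissible --- precisely the ``compatibility'' issue you single out as the hard part --- and no large-$\delta$ branch is ever needed. Your second regime ($\tau=\tau_0$, balancing $T_2$ against $T_3$) does close the argument, but it costs a constant depending on $\tau_0$ and on your threshold, which is only harmless because that dependence can be routed through the choice of $\kappa_0$; the paper's trick buys uniformity in $\delta$ and a one-case proof, while your version buys nothing extra in generality.
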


\begin{proof}
To simplify the notation, let us denote $\widehat{s}:=(2m-1)(s-n/8 + 13/8)$ and $\gamma_0=\kappa_0^{2m-1}/M$. We take $\kappa_0\leq \kappa$ to be so that $\gamma_0<1$. We will redefine $\kappa_0>0$ smaller later if necessary. A direct computation shows that
\begin{equation}
\partial_{\eps}f= -(\gamma_0\delta) \eps^{-m-1}+ \eps^{m-1} \tau^{\widehat{s}}, \quad \partial_\tau f=- \tau^{-3/2}+ \frac{\widehat{s}}{m-1}\eps^{m-1} \tau^{\widehat{s}-1}.
\end{equation}
Making $\partial_{\eps}f=\partial_{\tau}f=0$, we obtain the critical points of $f$, namely
\begin{equation}\label{eq:critical1d_z}
\begin{aligned}
\tau&= ({(m-1)\widehat{s}^{\s\, -1}})^{\frac{2(2m-1)}{2\widehat{s}m+ 2m-1}} (\gamma_0\delta)^{-\frac{2(m-1)}{2\widehat{s}m+2m-1}},\\
 \eps& = ({(m-1)\widehat{s}^{\s\s\, -1}})^{\s\s-\frac{2\widehat{s}}{2\widehat{s}m+2m-1}} (\gamma_0\delta)^{\frac{4\widehat{s}m+2m-1-2\widehat{s}}{(2\widehat{s}m+2m-1)(2m-1)}}.
 \end{aligned}
\end{equation}
(One can also verify that the Hessian of $f$ at the critical point is positive definite, hence the critical point is a local minimum.)

Note now that
\begin{align}\label{eq:tau_ineq}
\tau&= ({(m-1)\widehat{s}^{\s\, -1}})^{\frac{2(2m-1)}{2\widehat{s}m+ 2m-1}} (\gamma_0\delta)^{-\frac{2(m-1)}{2\widehat{s}m+2m-1}}\notag\\
&\geq
({(m-1)\widehat{s}^{\s\, -1}})^{\frac{2(2m-1)}{2\widehat{s}m+ 2m-1}} 
\kappa_0^{-\frac{2(m-1)(2m-1)}{2\widehat{s}m+2m-1}},
\end{align}
because by assumption $0<\delta<M$ and since $\gamma_0 =\kappa_0^{2m-1}/M$.
Since the constant
\[
({(m-1)\widehat{s}^{\s\, -1}})^{\frac{2(2m-1)}{2\widehat{s}m+ 2m-1}} >0 
\]
and the exponent
\[
-\frac{2(m-1)(2m-1)}{2\widehat{s}m+2m-1}<0
\]
do not depend on $\kappa_0$, we may choose $\kappa_0$ so that $\kappa_0<C\kappa$ and that $\tau$ in \eqref{eq:tau_ineq} satisfies
\[
 \tau= ({(m-1)\widehat{s}^{\s\, -1}})^{\frac{2(2m-1)}{2\widehat{s}m+ 2m-1}} (\gamma_0\delta)^{-\frac{2(m-1)}{2\widehat{s}m+2m-1}}\geq\tau_0.
\]
With these choices, we have at the critical point of $f(\eps,\tau)$ given by  \eqref{eq:critical1d_z} that
that 
\[
\eps\s \tau^{s- \frac{n}{8} + \frac{13}{8}} = \eps\s \tau^{\frac{\widehat{s}}{2m-1}} = (\gamma_0\delta)^{\frac{1}{(2m-1)}} = \left(\frac{\kappa_0^{2m-1}}{M} \delta\right)^{\frac{1}{2m-1}} \leq \kappa_0 < C\kappa
\]
for all $0<\delta<M$. A straightforward calculation using \eqref{eq:critical1d_z} shows that  $\tau^{-1/2}$, $(\gamma_0\delta) \eps^{-m}$ and $\eps^{m-1} \tau^{\widehat{s}}$ are all bounded by $C_{s,m,M,\kappa_0}\,  (\gamma_0\delta)^{\frac{m-1}{2\widehat{s}m+2m-1}}$, where the constant $C_{s,m,M,\kappa_0}$ is independent of $\eps$ and $\tau$. This concludes the proof. 
 \end{proof}

Recall the equations \eqref{b_0_z} and \eqref{est:optimising}. We set $\eps_1=\cdots=\eps_m=:\eps$ and apply Lemma \ref{lemma:final_estimate} to obtain
\begin{equation}\label{est:optimising_z_z}
\begin{aligned}
& \left| v_0(p_0)\right|\left| q_1(p_0)- q_2(p_0)\right|\left| \hat{v}(p_0)\right|| \det \mathcal{H}|^{-1/2} \\
& \lesssim \dfrac{C_{\Omega, T, q_j, \chi}M}{\kappa_0^{2m-1}}   \left(2\tau^{-1/2} + \frac{\kappa_0^{2m-1}\s\delta}{m\s M} \epsilon^{-m} +\frac{1}{m-1}\s \eps^{m-1} \, \tau^{(2m-1)(s -\frac{n}{8}+ \frac{13}{8})}\right)\\
& \leq C_0 \delta^{\frac{8(m-1)}{2m(m-1)(8s-n+13)+2m-1}}.
\end{aligned}
\end{equation}
Since $p_0\in I^-(\Sigma)\cap ([0,T]\times \Omega)$, by Lemma \ref{optimal_geo} there exists a past-directed optimal geodesic from $\Sigma$ to $p_0$ such that the first intersection of the geodesic and $\Sigma$ is transverse. 
Since the intersection is transverse, the geodesic does not intersect $\{t=T\}$. Therefore, we may choose $v_0$ to be a Gaussian beam corresponding to the geodesic  such that $v_0|_{t=T}=\p_tv_0|_{t=T}=0$.  We may assume by normalizing that $v_0(p_0)=1$. Recall also that $\hat{v}(p_0)>c>0$ and $|\det \mathcal{H}|>0$ by \eqref{est:v_hat_z} and \eqref{eq:mathcalH} respectively. Dividing \eqref{est:optimising_z_z} by the norm of $v_0(p_0)\hat{v}(p_0)| \det \mathcal{H}|^{-1/2}$, we have a stability estimate 
\begin{equation}\label{est:optimising_z_z_div}
 \left| q_1(p_0)- q_2(p_0)\right| \leq C\delta^{\frac{8(m-1)}{2m(m-1)(8s-n+13)+2m-1}}
\end{equation}
at the point $p_0$. We next show that the constant $C$ can be redefined to be independent of $p_0$.







\subsection{Step 4: Uniformity of the constant $C$}\label{sec:uniformity_of_constants}
So far we have obtained the estimate~\eqref{est:optimising_z_z_div} regarding the difference of $q_1$ and $q_2$ at the single point $p_0$. The constant $C$ may at this point depend on $p_0$. Next we argue that the constant $C$ can be redefined to be independent of $p_0$. This will then yield~\eqref{eq:esimate_for_potential_difference} and conclude the proof of Theorem~\ref{thm:stability} in the simplified setting, where we assumed that lightlike geodesics can intersect only once.

To show that $C$ in~\eqref{est:optimising_z_z_div}  can be taken to be independent of $p_0$, we first construct an open cover of 
$W\subset I^+(\Sigma)\cap I^-(\Sigma)$ 
as follows.  (Recall from  \eqref{eq:recovery_set}  that $W$ is a compact set which we can reach and observe from $\Sigma$.) Let 
$z\in W$.
 By Lemma~\ref{optimal_geo} there are optimal lightlike geodesics $\gamma_1$ and $\gamma_2$ that intersect at $z$ and which do not intersect $\{t=0\}$. We may reparametrize so that $\gamma_1(0)=\gamma_2(0)=z$. Let $\eps=\abs{\dot\gamma_1(0)-\dot\gamma_2(0)}$. Here and below $\abs{\ccdot}$ denotes the $\R^n$ norm of vectors in local coordinates. 

By Corollary~\ref{uniform_family_of_Gaussian_beams} there are open neighbourhoods $\mathcal{U}_1$ and $\mathcal{U}_2$ of $z$ and families of Gaussian beams $v_\tau(x,l,\ccdot)$ (including the correction term) parametrized by $x\in \mathcal{U}_l$ and $l=1,2$, such that all the implied constants, such as $\tau_0$, in the construction of $v_\tau(x,l,\ccdot)$ are uniform in $x$. Moreover, still by using Corollary~\ref{uniform_family_of_Gaussian_beams}, the geodesics $\gamma_{x,l}$ corresponding to the Gaussian beams $v_\tau(x,l,\ccdot)$ satisfy $\abs{\dot\gamma_l(0)-\dot\gamma_{x,l}(0)}\leq \eps/3$, $l=1,2$. Then, for $x\in \mathcal{U}_1\cap \mathcal{U}_2$, we also have that
\begin{equation}\label{eq:positive_angle}
 \abs{\dot\gamma_{x,1}(0)-\dot\gamma_{x,2}(0)}\geq \eps/3>0.
\end{equation}
We conclude that  the geodesics $\gamma_{x,1}$ and $\gamma_{x,2}$ intersect at $x$ and do not have the same graphs. 
We also set
\[
 \hat v_x(\ccdot)=(v_\tau(x,l, \ccdot))^{m-4}|_{\tau=\tau_0, l=1}
\]
for $x\in \mathcal{U}_1\cap \mathcal{U}_2$. By redefining $\tau_0$ larger, if necessary, we have that $\abs{\hat v_x(x)}\geq d>0$ for all $x\in \mathcal{U}_1\cap \mathcal{U}_2$.



In deriving~\eqref{est:optimising_z_z_div} in this Section~\ref{sec:proof_of_stabilit_estimate}, we used normal coordinates. Normal coordinates are uniquely defined by choosing an orthonormal basis at a point. By using a local orthonormal frame on a neighbourhood $\mathcal{U}_3$ of $z$, we may find a family of normal coordinates smoothly parametrized by $x\in \mathcal{U}_3$. It follows that the contribution to $C$ in~\eqref{est:optimising_z_z_div} coming from the use of normal coordinates may be taken to be uniformly bounded for all $x\in \mathcal{U}_3$. All things considered, by repeating the arguments in this Section~\ref{sec:proof_of_stabilit_estimate}, we may take the constant $C$ to be uniform for all $x\in \mathcal{U}_1\cap \mathcal{U}_2\cap \mathcal{U}_3$, where $\mathcal{U}_1\cap \mathcal{U}_2\cap \mathcal{U}_3$ is a neighbourhood of $z$.

Recall that we aim to estimate the difference of $q_1$ and $q_2$ in the compact set $W \subset I^+(\Sigma)\cap I^{-}(\Sigma)$. By covering first the compact set $W$ by the sets $\mathcal{U}_1\cap \mathcal{U}_2\cap \mathcal{U}_3$ as described above and then passing to a finite subcover, we have that~\eqref{est:optimising_z_z_div} holds for all $z\in W$.
Finally, we apply Lemma~\ref{lem:separation_filter} with $P=1$ to deduce that there is a finite family of functions $v_{z,0}$ satisfying $\square_g v_{z,0}=0$ in $[0,T]\times \Omega$ and $v_{z,0}|_{t=T}=\p_t v_{z,0}|_{t=T}=0$ and such that $|v_{z,0}(z)|\geq c>0$. (Only finitely many of the functions $v_{z,0}$ are actually distinct.) Combining everything yields the estimate
\begin{equation}\label{eq:uniform_estimate_with_H}
\begin{aligned}
&  \left| (v_{z,0}(z)\hat v_z(z)(q_1-q_2))(z)\right|| \det \mathcal{H}_{z}|^{-1/2} \leq C\delta^{\frac{8(m-1)}{2m(m-1)(8s-n+13)+2m-1}},
\end{aligned}
\end{equation}
which holds for all $z\in W$.
Here the point $z$ corresponds to the origin $0$ of normal coordinates centered at $z$ and all the quantities are expressed in these coordinates. The point $z$ is also the point where the geodesics $\gamma_{z,1}$ and $\gamma_{z,2}$ corresponding to the Gaussian beams $v_\tau(z,1,\ccdot)$ and $v_\tau(z,2,\ccdot)$ intersect. 

By Remark \ref{remove_hat_v_z}, we have that $|v_{z,0}(z)|\geq c>0$ and hence $\abs{\hat v_z(z)}\geq d>0$ in~\eqref{eq:uniform_estimate_with_H}. Let us estimate $\abs{\det \mathcal{H}_{z}}$ where
\[
 \mathcal{H}_z=2\nabla^2\text{Im}(\Theta_{z,1}(x)+\Theta_{z,2}(x))\big|_{x=z}.
\]
Here $\Theta_{z,1}$ and $\Theta_{z,2}$ are the phase functions corresponding to the Gaussian beams $v_\tau(z,1,\ccdot)$ and $v_\tau(z,2,\ccdot)$ respectively. Here also $\nabla^2$ is the invariant Hessian. In the normal coordinates centered at $z$ we have that the geodesics $\gamma_{z,1}$ and $\gamma_{z,2}$ are rays emanating in from origin. Since $\gamma_{z,1}$ and $\gamma_{z,2}$ do not have the same graphs, the rays are not same and there is a positive angle (in $\R^{n+1}$ metric) between the rays in the normal coordinates. Due to~\eqref{eq:positive_angle}, the angle is uniformly bounded from below by a positive constant. Consequently, using also the facts that 
\[
 \mathrm{Im}(\nabla^2 \Theta_{z,l})(z) \geq 0, \quad \mathrm{Im}(\nabla^2 \Theta_{z,l})(z)|_{\dot{\gamma}_{z,l}(0)^{\perp}} > 0
\]
we conclude that there is $h>0$ such that $\abs{\det \mathcal{H}_{z}}>h$ for all 
$z\in W$. 
Dividing~\eqref{eq:uniform_estimate_with_H} by $|v_{z,0}(z)|$, $\abs{\hat v_x(x)}$ and $\abs{\det \mathcal{H}_{z}}^{-1/2}$, and redefining $C$ larger, if necessary, concludes the proof in the special case where we assumed that lightlike geodesics intersect can  only once. 
  
\subsection{Step 5: Multiple intersections}\label{sec:multiple_intersections}
We have proven Theorem~\ref{thm:stability} in the special case, which assumed that the used lightlike geodesics intersect only once. In the case of multiple intersections, we can perform a similar analysis as in the special case, but this leads to  an estimate for a sum of terms regarding the difference $q_1-q_2$ at the intersection points. To separate the contributions coming from several intersection points, we will use separation matrices and a separation filter introduced in Lemma~\ref{lemma:separation_of_multiple_points} and Lemma~\ref{lem:separation_filter}. Most of the work needed to handle the case of several intersection was already done in proving these two lemmas. By Lemma~\ref{lemma:geodesics_intersections} we know that there is $P\in \N$ such that lightlike geodesics can intersect at most $P$ times in $[0,T]\times \Omega$. 

Let $\gamma_1$ and $\gamma_2$ be future-directed lightlike geodesics starting from $\Sigma$ that intersect for the first time at $z$ and which do not intersect $\{t=0\}$.
Let
\[
 z_1,\ldots,z_{P_0}
\]
be the intersection points of $\gamma_1$ and $\gamma_2$ arranged as $z_1\leq z_2\leq \cdots\leq z_{P_0}$, where $P_0\leq P$ and 
\[
 z=z_1.
\]
As in~\eqref{sol:decay_tau_z}, we choose  
\begin{equation*}
 v_{j}=\tau^{1/8}(v_{\tau, j}+r_j), 
 \quad j=1,2,
\end{equation*}
to be Gaussian beams associated to $\gamma_1$ and $\gamma_2$. We also choose 
\[
 v_j=\overline{v}_{j-2}, \quad j=3,4, \ \text{ and } \hat v =(v_1|_{\tau=\tau_0})^{m-4}
\]
as before. Since the product $v_1\s \cdots\s v_m$ is supported on neighbourhoods of the intersection points, the term 
\[
\langle v_0(q_1-q_2), v_1\s \cdots\s v_m \rangle_{L^2([0,T]\times\Omega)}= \int_{[0,T]\times \Omega} v_0(q_1-q_2) \s v_1\s \cdots\s v_m dV_g,
\]
becomes a sum of terms
\begin{equation}\label{eq:sum_of_contributions}
 \sum_{j=1}^{P_0}\tau^{\frac{n+1}{2}} \s \int_{B(z_j)} v_0(x)\s (q_1-q_2)(x) A(x)  \s  e^{i \tau \hat{\Theta}(x)} \s e^{-\tau x\cdot \mathcal{H}_{z_j}\s x} \d V_{g},
\end{equation}
where each set $B(z_j)$ is a neighbourhood of $z_j$, $j=1,\ldots,P_0$. Here 
$\hat{\Theta}(x)$ and $A(x)$ are defined similarly to~\eqref{eq:def_of_hatTheta} and \eqref{eq:def_of_A} respectively and 
\[
 \mathcal{H}_{z_j}=2\nabla^2\text{Im}(\Theta_1(x)+\Theta_2(x))\big|_{x=z_j}, \quad j=1,\ldots,P_0
\]
as before. 

Let $\mathcal{M}=\{f_k\}_{k\in\mathcal{K}}$ be a separation filter of $[0,T]\times \Omega$ given by Lemma~\ref{lem:separation_filter} with the compact set 
$W$ and $P_0$ as $P$. Here $f_k\in C^\infty(\Sigma)$ and $\mathcal{K}$ is a finite index set. According to Lemma~\ref{lem:separation_filter}, the corresponding solutions $v_{f_k}$ to $\square_gv=0$ in $[0,T]\times \Omega$ can be chosen so that the associated separation matrix $(v_{f_k}(z_j))_{k,j=1}^{P_0}$ is invertible. By repeating the calculation in~\eqref{eq:estimate_for_optimization_z} 
we have for each $k\in \mathcal{K}$ that
\begin{align*}
\abs{\langle v_{f_k}(q_1-q_2), v_1\s \cdots\s v_m &\rangle_{L^2([0,T]\times\Omega)}} \\ 
&\leq C_k  \,(\eps_1 \s\cdots \s\eps_m)^{-1}\, \left[ \delta  + \left( \sum_{j=1}^m \varepsilon_j \Vert  f_j\Vert_{H^{s+1}(\Sigma)}\right)^{2m-1} \right]. 
\end{align*}
We apply~\eqref{eq:sum_of_contributions} with $v_{f_k}$ in place of $v_0$ and note that the integrals in~\eqref{eq:sum_of_contributions} are the value of the integrand at $z_k$ plus a term of size $O(\tau^{-1/2})$ by calculations \eqref{eq:mathcalH}--\eqref{est:optimising} and Lemma \ref{est:tau_z}. Optimizing as in Section~\ref{sec:optimizing_eps_tau} in $\tau$ and $\eps_1,\ldots,\eps_m$ yields that
\[
 \left| \sum_{j=1}^{P_0}v_{f_k}(z_j)(q_1(z_j)- q_2(z_j))\hat{v}(z_j) |\det \mathcal{H}_{z_j}|^{-1/2}\right|\leq C\delta^{\frac{8(m-1)}{2m(m-1)(8s-n+13)+2m-1}}
\]
for all $k=1,\ldots,P_0$. Let us define a matrix $A$ and a vector $\mathcal{Q}$ as
\[
A_{kj}= v_{f_k}(z_j), \quad \mathcal{Q}_j=(q_1(z_j)- q_2(z_j))\hat{v}(z_j) |\det \mathcal{H}_{z_j}|^{-1/2},
\]
where $j,k=1,\ldots,P_0$.
Since the separation matrix $\{v_{f_k(x_j)}\}_{k,j=1}^{P_0}$ is invertible, we have that 
\begin{align*}
 \abs{\mathcal{Q}_1}\leq \norm{\mathcal{Q}}=\norm{A^{-1} (A \mathcal{Q} )}\leq \norm{A}^{-1} \norm{A \mathcal{Q}}
\leq \norm{A}^{-1}  \s C\delta^{\frac{8(m-1)}{2m(m-1)(8s-n+13)+2m-1}}.
\end{align*}
Recalling that $z_1=z$,  we thus have 
\begin{equation}\label{eq:estim_with_A}
 \left|(q_1(z)- q_2(z))\hat{v}(z) |\det \mathcal{H}_z|^{-1/2}\right|\leq C\norm{A}^{-1}\delta^{\frac{8(m-1)}{2m(m-1)(8s-n+13)+2m-1}}.
\end{equation}

In~\eqref{eq:estim_with_A}, $\hat v_z$, $\det \mathcal{H}_z$, but also $\norm{A}^{-1}$ depend on the point $z$. We argued in Section~\ref{sec:uniformity_of_constants} that $\hat v_z$, $|\det \mathcal{H}_z|^{-1/2}$ have norms, which are uniformly bounded from below with respect to $z$. Since the separation filter $\mathcal{M}$ is a finite collection, we may also bound $\norm{A}^{-1}$ uniformly when we consider different points in 
$W$.
 By using these facts and by dividing by $|\hat{v}(z) \det \mathcal{H}_z|^{-1/2}$ and redefining $C$ shows that 
\[
 \norm{q_1- q_2}_{L^\infty(W)}\leq C\delta^{\frac{8(m-1)}{2m(m-1)(8s-n+13)+2m-1}}.
\]
This concludes the proof of Theorem~\ref{thm:stability}.

\appendix
\section{A bound on number of intersections}\label{app:geometry}
The following lemma shows that given a compact set $K\subset N$ of a globally hyperbolic Lorentzian manifold there is a uniform bound on the number of possible intersections of pairs of causal geodesics in $K$.
In our particular application we will apply this lemma with $K=[0,T]\times \Omega$ and $N=\R\times M$.
\begin{lemma}\label{lemma:geodesics_intersections}
Let $(N,g)$ be a globally hyperbolic Lorentzian manifold and let $K\subset N$ be a compact set. There is $P\geq 1$ with the following property.
%
%
%
Let $\gamma_1$ and $\gamma_2$ be two causal geodesics.
Then the number of intersection points of $\gamma_1$ and $\gamma_2$ is bounded by $P$, that is,
$$
\#(\gamma_1\cap \gamma_2) \leq P.
$$
\end{lemma}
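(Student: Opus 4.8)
The plan is to parametrize the portions of the two geodesics lying in $K$ by the time coordinate $t$, and to reduce the count of intersections to a count of zeros of a vector-valued function obeying a \emph{linear} ODE with uniformly bounded coefficients. First I would dispose of the degenerate case: if $\gamma_1$ and $\gamma_2$ have the same image the statement is either vacuous or excluded, so assume they are distinct. Since $N=\R\times M$ is globally hyperbolic and time-oriented, the coordinate $t$ is strictly monotone along any causal curve; as $K$ is compact, $t(K)\subset[a,b]$ for some $a<b$, and I would reparametrize the part of each (future-directed) causal geodesic meeting $K$ by $t\in[a,b]$, writing $\gamma_i(t)=(t,x_i(t))$. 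An intersection point in $K$ then corresponds to a time $t$ at which $x_1(t)=x_2(t)$, with both points in $K$.

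Next I would record the uniform bounds coming from causality and compactness. In the product coordinates the geodesic equation, reparametrized by $t$, takes the form $\ddot x_i=G(t,x_i,\dot x_i)$ with $G$ built from the Christoffel symbols of $g$ (well defined because $dt(\dot\gamma_i)\neq 0$). The causal condition $g(\dot\gamma_i,\dot\gamma_i)\le 0$ reads $h(\dot x_i,\dot x_i)\le\beta$, so $\dot x_i$ ranges over a compact set as $(t,x_i)$ ranges over $K$. Covering $K$ by finitely many charts and using compactness, $G$ together with its first derivatives is bounded on this set by a constant $L$ depending only on $g$ and $K$.

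The heart of the argument is a uniform lower bound on the gap between successive intersection times. Near a common point set $w=x_1-x_2$; by the fundamental theorem of calculus $w$ solves a linear system $\ddot w=A(t)w+B(t)\dot w$ with $\|A\|,\|B\|\le L$. At an intersection time $t_0$ one has $w(t_0)=0$ and, crucially, $\dot w(t_0)\neq 0$, since equality of both positions and velocities would force $\gamma_1=\gamma_2$ by ODE uniqueness. A Gronwall estimate on $[t_0,t_0+\tau]$ gives $|(w,\dot w)(t)|\le |\dot w(t_0)|\,e^{L_0\tau}$ for a constant $L_0$ determined by $L$, hence $|\ddot w|\le 2L|\dot w(t_0)|e^{L_0\tau}$ and therefore $|w(t_0+\tau)|\ge |\dot w(t_0)|\,\tau\,(1-L e^{L_0\tau}\tau)$. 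The factor $|\dot w(t_0)|$ cancels, so $w(t_0+\tau)\neq 0$ for $0<\tau<\tau_*$, where $\tau_*>0$ depends only on $L$ and is in particular independent of the pair $\gamma_1,\gamma_2$ and of the crossing angle. Successive intersection times are thus at least $\tau_*$ apart, so their number in $[a,b]$ is at most $P:=\lceil(b-a)/\tau_*\rceil+1$.

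I expect the main obstacle to be exactly the requirement that the gap $\tau_*$ be independent of how nearly tangential the crossings are: a crude estimate bounds the gap by $|\dot w(t_0)|/\sup|\ddot w|$, which degenerates as $|\dot w(t_0)|\to 0$, and one might fear that near-tangential crossings allow arbitrarily many intersections. The linear reformulation resolves this, because the linear growth of $w$ away from a zero and its second-order correction both scale with $|\dot w(t_0)|$, which then cancels. The remaining work is bookkeeping: making $L$ and $\tau_*$ uniform over $K$ via finitely many charts and the velocity bound, and checking that on the short interval $[t_0,t_0+\tau_*]$ both geodesics stay in a common chart so that $w$ is defined — both routine consequences of the uniform bound on $\dot x_i$.
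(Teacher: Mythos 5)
Your proof is correct, but it takes a genuinely different route from the paper's. The paper argues softly and without coordinates: every point has an arbitrarily small \emph{convex normal} neighbourhood, inside which two distinct geodesics can meet at most once; by global hyperbolicity these neighbourhoods can moreover be taken \emph{causally convex} (so a causal geodesic that leaves one never returns); covering $K$ by finitely many such sets, say $P$ of them, each set contains at most one intersection point of the pair, which gives the bound $P$ immediately. You instead reparametrize both geodesics by the time function, linearize the geodesic equation along the difference $w=x_1-x_2$, and extract from a Gronwall estimate a uniform lower bound $\tau_*$ on the time gap between successive intersections; the essential and correct observation is that the factor $|\dot w(t_0)|$ cancels, so nearly tangential crossings cause no degeneration, and $\dot w(t_0)\neq 0$ holds because equality of positions and $t$-velocities would force the two geodesics to coincide. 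Comparing the two: the paper's argument is much shorter and uses only standard causality theory, while yours is quantitative --- it produces an explicit minimal temporal separation between intersection points, which is strictly more information than a counting bound --- at the price of the bookkeeping you acknowledge. One point you should make explicit rather than call routine: the velocity and coefficient bounds you establish hold on $K$, but after an intersection the geodesics may exit $K$ instantly, so you must first fix a compact neighbourhood $K'$ of $K$ (with positive distance from $K$ to $\partial K'$), derive the uniform causal speed bound and the bound $L$ on $K'$, and then choose $\tau_*$ small enough that geodesics issued from $K$ remain in $K'$ (and in a common convex chart) for time $\tau_*$; with that ordering of choices the circularity between $\tau_*$ and $L$ disappears and the argument is complete.
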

\begin{proof}
Every point $p\in N$ has arbitrarily small convex normal neighbourhoods $U_p$, whence any two distinct geodesics can intersect at most once in these sets.
As $N$ is globally hyperbolic, $U_p$ can be taken causally convex, see e.g. \cite{Min19}.
Because $K$ is compact, there exists a finite cover
$$
\bigcup_{j=1}^P U_{p_j} \supset K
$$
formed of sets $U_{p_j}$.
As the sets $U_{p_j}$ are causally convex, a geodesic leaving $U_{p_j}$ never returns to $U_{p_j}$.
Therefore a pair of geodesics in $K$ can intersect at most once in each of the $P$ open sets.
\end{proof}

\section{Proof of Proposition~\ref{thm:energy}}\label{app:forward}

Before proceeding to the proof of Proposition~\ref{thm:energy}, which concerns the well-posedness of the linear wave equation \eqref{wave-eq_with_data}, we need the following lemma.
\begin{lemma}\label{lem:epsilon_ngbh}
Let $(\R\times M, g)$ be globally hyperbolic manifold. Let also $t_0\in \R$ and let $S_{t_0} = \{t=t_0\}\times M$ be the corresponding Cauchy surface.
Suppose $V\subset S_{t_0}$ is a compact set in $S_{t_0}$ and $W$ is an open neighbourhood of $V$ in $\R\times M$.
Then there exists $\eps>0$ such that $([t_0,t_0+\eps]\times M) \cap J^+(V) \subset W$. Especially if $V\Subset U$, where $U$ is open in $S_{t_0}$, there exists $\eps>0$ such that $([t_0,t_0+\eps]\times M) \cap J^+(V) \subset [t_0,t_0+\eps]\times U$.
\end{lemma}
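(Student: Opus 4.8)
The plan is to deduce the ``especially'' part from the first inclusion, and to prove the first inclusion by a finite–propagation–speed argument resting on the fact that $t$ is a temporal function for $g$. For the reduction, assume $V\Subset U$ with $U$ open in $S_{t_0}$ and apply the first assertion with $W=\R\times U$, which is open in $\R\times M$ and contains $V$; intersecting the resulting inclusion with $[t_0,t_0+\eps]\times M$ gives $([t_0,t_0+\eps]\times M)\cap J^+(V)\subset(\R\times U)\cap([t_0,t_0+\eps]\times M)=[t_0,t_0+\eps]\times U$, so it suffices to prove the first assertion. I argue by contradiction: if no such $\eps$ exists, then for every $k\in\N$ there are $p_k\in([t_0,t_0+1/k]\times M)\cap J^+(V)$ with $p_k\notin W$, a point $q_k\in V$, and a future-directed causal curve $\alpha_k$ from $q_k$ to $p_k$. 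Since $V$ is compact, pass to a subsequence with $q_k\to q_*\in V$.

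Next I trap everything in one compact set. From \eqref{eq:g} one has $g(\p_t,\p_t)=-\beta<0$, so $\p_t$ is timelike and $t$ is the Cauchy time function of the splitting $N=\R\times M$; in particular $t$ is strictly increasing along every future-directed causal curve (cf. \cite{BS2005}). Hence each $\alpha_k$ lies in the slab $\{t_0\le t\le t_0+1/k\}$, every point of $\alpha_k$ belongs to $J^+(q_k)\subset J^+(V)$, and $t(p_k)\to t_0$. By the finite–propagation property of globally hyperbolic manifolds, the causal future of the compact set $V$ meets the slab $\{t_0\le t\le t_0+1\}$ in a set contained in a fixed compact set $C$ (see \cite{One83,BS2005}); thus all the $\alpha_k$, and all the $p_k$, lie in $C$.

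Now I quantify the shrinking. On the compact set $C$ the coefficient $\beta$ is bounded by some $\beta_{\max}$ and $\overline{g}$ is comparable to $dt^2+h$. For a future-directed causal curve the relation $g(\dot\alpha,\dot\alpha)\le0$ forces the spatial part $\dot x$ to obey $h(\dot x,\dot x)\le\beta\,\dot t^2$, so the $\overline{g}$-speed is controlled by $\dot t$: there is $C_0>0$, depending only on $C$, with $|\dot\alpha|_{\overline{g}}\le C_0\,\dot t$. Integrating along $\alpha_k$ yields $d_{\overline{g}}(q_k,p_k)\le\mathrm{length}_{\overline{g}}(\alpha_k)\le C_0\,(t(p_k)-t_0)\le C_0/k\to0$. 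Since $q_k\to q_*$, this gives $p_k\to q_*\in V\subset W$; as $W$ is open, $p_k\in W$ for large $k$, contradicting $p_k\notin W$. The same argument, run with the isometry $t\mapsto T-t$, or directly, handles the past case if needed.

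The main obstacle is the trapping step: ruling out that the curves $\alpha_k$ escape to spatial infinity inside the thin slab. This is exactly finite propagation speed, and it is encoded in the compactness of $J^+(V)$ within the slab. Once the curves are confined to a common compact set, the temporal monotonicity of $t$ converts the small elapsed coordinate time into a small $\overline{g}$-length, and the conclusion follows immediately; the remaining verifications (comparability of $\overline{g}$ with $dt^2+h$ on $C$ and the bound $|\dot\alpha|_{\overline g}\le C_0\dot t$) are routine.
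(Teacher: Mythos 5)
Your proof is correct, and it shares the paper's overall skeleton: argue by contradiction on points $p_k \in ([t_0,t_0+1/k]\times M)\cap J^+(V)$ with $p_k\notin W$, and use as the key input that $J^+(V)$ intersected with a slab $\{t_0\le t\le t_0+1\}$ is contained in a compact set. The paper makes that input precise via \cite[Corollary A.5.4]{BGP} (compactness of $J^+(V)\cap J^-(S_{t_0+1})$); your appeal to ``finite propagation'' with \cite{One83,BS2005} is the same fact cited loosely, and you correctly flag it as the one nontrivial step. After this the two proofs genuinely diverge. The paper argues softly: compactness of $([t_0,t_0+\eps]\times M)\cap J^+(V)$ yields a convergent subsequence $p_{k_i}\to p$, and since the time coordinates tend to $t_0$, the limit lies in $S_{t_0}\cap J^+(V)=V\subset W$, a contradiction; note this silently uses that a future-directed causal curve leaving $S_{t_0}$ cannot return to it, so that $S_{t_0}\cap J^+(V)=V$. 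You instead keep the connecting causal curves $\alpha_k$ explicit: once they are trapped in the compact set $C$, the causality inequality $h(\dot x,\dot x)\le\beta\,\dot t^2$ and boundedness of $\beta$ give the speed bound $|\dot\alpha_k|_{\overline g}\le C_0\,\dot t$, so $d_{\overline g}(q_k,p_k)\le C_0/k$ and $p_k\to q_*\in V$ without any limit-point extraction or the identification $S_{t_0}\cap J^+(V)=V$. What each buys: the paper's version is shorter and purely causal-topological, but leaves that identification implicit; yours is more quantitative and makes the ``finite speed of propagation'' intuition literal, at the modest cost of introducing the auxiliary Riemannian metric and the comparability argument on $C$. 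Your reduction of the ``especially'' part (apply the first claim with $W=\R\times U$ and intersect with the slab) is essentially identical to the paper's.
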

\begin{proof}
For the first claim, assume that there is no such $\eps>0$.
Then there are numbers $\eps_k>0$ with $\eps_k\to 0$ as $k\to\infty$ and points $p_{k}\in ([t_0,t_0+\eps_k]\times M) \cap J^+(V)$, but $p_k\not\in W$.
Since $W$ is open, any accumulation points of $p_k$, if they exist, are not in $W$.
As $\eps_k\to 0$ there is $\eps \geq \eps_k$ for all sufficiently large $k\in\N$, say, $k\geq k_0$.
It follows that $p_{k}\in ([t_0,t_0+\eps]\times M) \cap J^+(V)$ for all $k\geq k_0$.
Because $\R\times M$ is foliated by the space-like Cauchy surfaces $S_t$, we have 
$$
[t_0,t_0+\eps]\times M = \bigcup_{t\in [t_0,t_0+\eps]} S_t.
$$
Also $S_t \subset J^-(S_T)$ for all $t\leq T$, because if $\gamma$ is any inextendible future-directed causal curve with $\gamma(s)\in S_t$ for some $s\in\R$, then
this curve intersects $S_T$ in the future.
By \cite[Corollary A.5.4]{BGP}, the intersection $J^-(S_{t_0+\eps})\cap J^+(V)$ is compact.
So $[t_0,t_0+\eps]\times M$ being a closed subset of $J^-(S_{t_0+\eps})$ implies that $([t_0,t_0+\eps]\times M)\cap J^+(V)$ is compact and there exists a convergent subsequence $p_{k_i}\to p\in([t_0,t_0+\eps]\times M) \cap J^+(V)$.
Due to the construction, as $\eps_{k_i} \to 0$ we have $p_{k_i}\to p\in \{t=t_0\}\times M \cap J^+(V) = V\subset W$. Thus $p\in W$, which is a contradiction.
%

%
Suppose now that $W= (a,b)\times U$ where $t_0\in (a,b)\subset\R$.
Then if $\eps>0$ is so small that $([t_0,t_0+\eps]\times M)\cap J^+(V)\subset (a,b)\times U$, we have $([t_0,t_0+\eps]\times M)\cap J^+(V)\subset [t_0,t_0+\eps]\times U$.
If not, we would have some $p = (t,x)\in ([t_0,t_0+\eps]\times M)\cap J^+(V)$ with $t\not\in [t_0,t_0+\eps]$ or $x\not\in U$.
Both options are invalid, so also the second claim holds.
\end{proof}

\begin{proof}[Proof of Proposition~\ref{thm:energy}]
%

Let us first recall results in the special case where $\Omega$ is a domain $\Omega\subset \R^n$.
From \cite{LLT86} we know that there exists a unique solution $v\in E^s$ to the problem
\begin{equation}\label{eq:lasiecka}
\begin{cases}
(\p_t^2 - \Delta_h) v = F, &\text{in } [0,T]\times \Omega,\\
v=f, &\text{on } [0,T]\times \p\Omega, \\
v=u_0,\, \p_t v = u_1,&\text{in } \{t=0\}\times \Omega,
\end{cases}
\end{equation}
if $h(t,\ccdot)$ is a smooth $1$-parameter family of Riemannian metrics on $\R^n$ and if we assume that $F$, $f$, $u_0$ and $u_1$ satisfy the regularity and compatibility conditions of our proposition in $\R^n$. Under these assumptions, 
we also know from classical results such as \cite{Ikawa} that there exists a unique solution $w\in E^{s+1}$ to
\begin{equation}\label{eq:Ikawa}
\begin{cases}
(\p_t^2 - \Delta_h) w + Aw= G, &\text{in } [0,T]\times \Omega,\\
w=0, &\text{on } [0,T]\times \p\Omega, \\
w= \p_t w = 0,&\text{in } \{t=0\}\times \Omega
\end{cases}
\end{equation}
when $A\in C^\infty([0,T]\times \Omega)$ %
and $G\in E^s$. By combining the mentioned results, we have that the problem
\begin{equation}\label{eq:wave_eq_in_Rn}
\begin{cases}
(\p_t^2 - \Delta_h) u + Au= F, &\text{in } [0,T]\times \Omega,\\
u=f &\text{on } [0,T]\times \p\Omega, \\
u=u_0,\, \p_t u = u_1,&\text{in } \{t=0\}\times \Omega
\end{cases}
\end{equation}
has a unique solution $u\in E^{s+1}$ and the regularity results of \cite{Ikawa,LLT86} also show that $\p_\nu u\in H^s([0,T]\times \p\Omega)$. Indeed, by solving first \eqref{eq:lasiecka} for $v\in E^{s+1}$ and then defining $G:= Av\in E^{s+1}$ for the problem \eqref{eq:Ikawa} we find $w\in E^{s+1}$ (in fact $w\in E^{s+2}$) solving \eqref{eq:Ikawa} and so that $u:= v-w$ solves~\eqref{eq:wave_eq_in_Rn}.
%
%

Let us then explain how these results translate to the case of a globally hyperbolic manifold $[0,T]\times M$ equipped with a Lorentzian metric $g = \beta(t,x)dt^2 - h(t,x)$. Here $\beta>0$ is a smooth function and $h(t,\ccdot)$ is a smooth $1$-parameter family of Riemannian metrics on $M$.
The function  $\beta>0$ is bounded from above and below by compactness of $[0,T]\times\Omega$. 
Via a conformal change of variables we obtain a scaled metric $\tilde g = dt^2-\beta^{-1}h$ for which the wave operator transforms as
\[
\mathcal{P}:=\beta^\frac{3}{2} \square_g \beta^{-\frac{1}{2}}
=
\square_{\tilde g} + V = \p_t^2 - \Delta_{\beta^{-1}h} + V. 
\]
Here $V(t,x)$ is a smooth function and $\Delta_{\beta^{-1}h}$ for each $t\in [0,T]$ is the Laplace-Beltrami operator of the Riemannian metric $(\beta^{-1}h)(t,\ccdot)$ on $M$.
Then $u$ solving \eqref{wave-eq_with_data} is equivalent to $v:=\beta^{\frac{1}{2}}u$ solving
\begin{equation}\label{eq:wave_eq_for_v}
\begin{cases}
\mathcal{P} v = \beta^{\frac{3}{2}}F, &\text{in } [0,T]\times \Omega,\\
v=\beta^{\frac{1}{2}}f, &\text{on } \Sigma, \\
v=\beta^{\frac{1}{2}}u_0,\, \p_t v = \frac{1}{2}\beta^{-\frac{1}{2}}\p_t\beta u_0 + \beta^{\frac{1}{2}}u_1,&\text{in } \{t=0\}\times \Omega.
\end{cases}
\end{equation}
From \cite[Theorem 24.1.1]{Hormander3} we know that there exists a unique solution to \eqref{eq:wave_eq_for_v}. (The result of \cite{Hormander3} is not however sufficient to us.) Also, in local coordinates in $\Omega$ this equation is of the form \eqref{eq:wave_eq_in_Rn}. 

Let us denote 
\[
R = \beta^{\frac{3}{2}}F, \quad r = \beta^{\frac{1}{2}}f,  \quad r_{0} = \beta^{\frac{1}{2}}u_0, \quad r_1 = \frac{1}{2}\beta^{-\frac{1}{2}}\p_t\beta u_0 + \beta^{\frac{1}{2}}u_1.
\]
Note that $\{t=0\}\times M$ is a space-like Cauchy surface in $\R\times M$.
Because $\Omega\subset M$ is a compact manifold, there exists a finite atlas $\{(U_j,\varphi_j)\}_{j=1}^k$ covering $\Omega$.
Let $\chi_j$ be a partition of unity subordinate to $\{U_j\}_{j=1}^k$ and let us denote the support of $\chi_j$ as
\[
V_j=\supp(\chi_j)\Subset U_j.
\]
%
Let us also denote
\[
R_j=\chi_j R, \quad r_j = \chi_j\big|_{\Sigma} r, \quad r_{0,j} = \chi_j r_0, \quad r_{1,j} = \chi_j r_1,
\]
denote the corresponding coordinate representations as
\[
 \tilde R_j = R_j\circ \varphi_j^{-1}, \quad \tilde r_j=r\circ \varphi_j^{-1}, \quad \tilde r_{0,j}=r_0\circ \varphi_j^{-1} \quad \tilde r_{1,j}=r_1\circ \varphi_j^{-1},
\]
and denote
\[
\tilde U_j = \varphi_j(U_j).
\]
%

%
We construct a solution to~\eqref{wave-eq_with_data} by patching up local solutions following partly the proof of~\cite[Proposition 3.2.11]{BGP}.
%
As we will see, this is possible due to the finite speed of propagation of solutions to a wave equation. 
Let $K_j$ be an open set with compact closure, such that $V_j\subset K_j$ and $\overline{K_j}\subset U_j$.
%
If $t\in \R$, we may use Lemma~\ref{lem:epsilon_ngbh} to deduce that there exists $\eps>0$ so that 
\[
 \big((t,t+\eps)\times \Omega \big)\cap J^+(V_j)\subset (t,t+\eps)\times K_j\subset (t,t+\eps)\times U_j
\]
holds. (This is similar to \cite[proof of Proposition 3.2.11]{BGP}.) Here $J^+$ is defined with respect to the conformal metric $\tilde g$. We remark that $J^+$ of a set is conformally invariant.
By compactness of $[0,T]$, there is a finite set of numbers $\eps_i>0$ and $t_i\in\R$ so that the intervals 
\[
I_i:=(t_i,t_i+\eps_i)
\]
cover $[0,T]$. 
We are going to find a solution to our wave equation~\eqref{wave-eq_with_data} iteratively in the index $i$ so that at each step of the iteration we have $\big(I_i\times \Omega \big)\cap J^+(V_j)\subset I_i\times U_j$, $j=1,\ldots, k$.  
Let us set $t_1=0<t_2<\cdots<t_l$ and $t_l+\eps_l = T$ and consider first the set $\big((0,\eps_1)\times \Omega \big)\cap J^+(V_j)$ first.
%
%
%

%
By the discussion around~\eqref{eq:wave_eq_in_Rn}, we have that there is a unique solution $\tilde u_j\in E^{s+1}$ to
\begin{equation}\label{eq:linear_wave_eq_in_Rn}
\begin{cases}
\tilde{\mathcal{P}}\s \tilde u_j = \tilde R_j, &\text{in } (0,\eps_1)\times \tilde U_j,\\
\tilde u_j=\tilde r_j, &\text{on } (0,\eps_1)\times \p \tilde U_j\cap \varphi_j(\p \Omega) , \\
\tilde u_j=0, &\text{on } (0,\eps_1)\times\p \tilde U_j\setminus\varphi_j(\p \Omega), \\
\tilde u_j=\tilde r_{0,j},\, \p_t \tilde u_j = \tilde r_{1,j},&\text{in } \{t=0\}\times \tilde U_j.
\end{cases}
\end{equation}
in each coordinate chart $\tilde U_j$, $j=1,\ldots,k$, in the time interval $(0,\eps_1)$. (Here and below we understand $\varphi_j(\p \Omega)=\emptyset$ if $U_j\cap \p \Omega=\emptyset$.)
Since our equation~\eqref{wave-eq_with_data} satisfies the compatibility conditions~\eqref{eq:compatibility}, one can verify by a direct calculation that \eqref{eq:linear_wave_eq_in_Rn} satisfies the compatibility conditions of~\cite{Ikawa, LLT86} that were needed for unique solvability  \eqref{eq:wave_eq_in_Rn}. In particular, at the intersection of $\{t=0\}$ and $\p \tilde U_j\cap \varphi_j(\p \Omega)$ the compatibility conditions follow from the assumptions of the proposition we are proving. At the intersection of $\{t=0\}$ and a neighbourhood of $\p \tilde U_j\setminus\varphi_j(\p \Omega)$ the initial values vanish due to the cut-off functions $\chi_j$.  Thus~\eqref{eq:linear_wave_eq_in_Rn} has a unique solution. 
%

Next, let us define
\begin{equation}
u_j=
\begin{cases}
\tilde{u}_j\circ \varphi_j &\text{ in } [0,\eps_1]\times U_j,\\
0 &\text{ in } [0,\eps_1]\times (\Omega\setminus U_j).
\end{cases}
\end{equation}
By the finite speed of propagation of solutions to a wave equation, see for example~\cite[Proposition 3.2.11]{BGP}, we have $\mathrm{supp}(u_j)\subset J^+(V_j)$,
and by the condition $\big((0,\eps_1)\times \Omega \big)\cap J^+(V_j)\subset (0,\eps_1)\times K_j\subset (0,\eps_1)\times U_j$, we have that
\[
 \tilde{u}_j=0 \text{ in a neighbourhood of } \p \tilde U_j\setminus\varphi_j(\p \Omega).
\]
Consequently, $u_j$ is the smooth continuation of $\tilde{u}_j\circ \varphi_j:U_j\to \R$ by zero and $u_j\in E^{s+1}$. We also continue $\tilde u_j$ smoothly by zero to $\R^n$ (or to $\R_n^+$ if $U_j$ is a boundary chart.)

We now patch up the functions $u_j$ as 
\[
u=\sum_{j=1}^k u_j\in E^{s+1}
\]
to have a solution to our equation~\eqref{eq:wave_eq_for_v} in the case $T=\eps_1$. Indeed, we have on $\big((0,\eps_1)\times U_j\big)$ that 
\[
 \mathcal{P}u=\sum_{j=1}^k (\tilde{\mathcal{P}}\tilde u_j)\circ \varphi_j=\sum_{j=1}^k\tilde{R}_j\circ \varphi_j=\sum_{j=1}^k\chi_jR=R.
\]
We also have that
%
\begin{equation}
\begin{cases}
u=f, &\text{on } [0,\eps_1]\times \p\Omega, \\
u=r_{0},\, \p_t u = r_{1},&\text{in } \{t=0\}\times \Omega,
\end{cases}
\end{equation}
which is  \eqref{eq:wave_eq_for_v} for $T=\eps_1$.

We continue iteratively and extend $u$ to a solution of~\eqref{wave-eq_with_data} in increasing time steps $t_i$. At each iteration step, which concerns the time-interval $I_i$, we use as the initial values $\tilde u\big|_{t=t_i}$ and $\p_t u\big|_{t=t_i}$. These are well defined since $t_i < t_{i-1}+\eps_{i-1}$. This way, we found a unique solution $u\in E^{s+1}$ to~\eqref{eq:wave_eq_for_v} in $[0,T]\times \Omega$, and consequently a unique solution to~\eqref{wave-eq_with_data} in the class $E^{s+1}$.

The regularity and unique existence results of solutions for~\eqref{wave-eq_with_data}, which we have now shown, can be turned into the energy estimate \eqref{energy_estimate} using the closed graph theorem.
Consider the Banach space $E^{s+1}$ and define a linear map
\[
A: E^s\times H^{s+1}(\Sigma)\times H^{s+1}(\Omega) \times H^s(\Omega) \to E^{s+1}
\]
by $A(F,f,u_0,u_1) = u$, where $u$ is the unique solution to \eqref{wave-eq_with_data}.
To have the energy estimate~\eqref{energy_estimate} it is sufficient to show that $A$ is continuous. By the closed graph theorem, this is in turn equivalent to showing that if
\[
\begin{cases}
(F_k,f_k,u_{0,k},u_{1,k})\to (F,f,u_0,u_1) &\text{in } E^s\times H^{s+1}(\Sigma)\times H^{s+1}(\Omega) \times H^s(\Omega),\\
A(F_k,f_k,u_{0,k},u_{1,k}) \to u_\infty &\text{in } E^{s+1},
\end{cases}
\]
then 
\[
u_\infty= A(F,f,u_0,u_1).
\]
Here $F_k \to \square_g u_\infty$ in $\mathcal{D}'([0,T]\times \Omega)$, $f_k\to u_\infty\big|_{\Sigma}$ in $\mathcal{D}'(\Sigma)$ and similarly for $t=0$, $u_{0,k}\to u_\infty$ and $u_{1,k}\to \p_t u_\infty$ in $\mathcal{D}'(\Omega)$.
Due to the uniqueness of limits, we have that $u_\infty$ solves~\eqref{wave-eq_with_data}.
%
Therefore, by uniqueness of solutions to the wave equation, we have that $u=A(F,h,u_0,u_1)$.
Hence $A$ is a bounded linear map and the energy estimate follows.
\end{proof}

\subsection*{Acknowledgements}

M. L. was supported by the Academy of Finland, grants 320113, 318990, and 312119. L. P-M. and T. L. were supported by the Academy of Finland (Centre of Excellence in Inverse Modeling and Imaging, grant numbers 284715 and 309963) and by the European Research Council under Horizon 2020 (ERC CoG 770924). T. T  was supported by the Academy of Finland (Centre of Excellence in Inverse Modeling and Imaging, grant number 312119).


\noindent{\footnotesize E-mail addresses:\\
Matti Lassas: {matti.lassas@helsinki.fi}\\
Tony Liimatainen: {tony.liimatainen@helsinki.fi}\\
Leyter Potenciano-Machado: {leyter.m.potenciano@jyu.fi}\\
Teemu Tyni: {teemu.tyni@helsinki.fi}
}

\end{document}